\newtheorem{lema}{Lemma}[section]
\newtheorem{teor}[lema]{Theorem}
\newtheorem{cor}[lema]{Corollary}
\newtheorem{prop}[lema]{Proposition}
\theoremstyle{definition}
\newtheorem{obs}[lema]{Remark}
\newcommand{\Ss}{{\mathcal{S}}}
\newcommand{\com}{\Delta}
\newcommand{\eps}{\varepsilon}
\newcommand{\gr}{\operatorname{gr}}
\newcommand{\cS}{\mathcal{S}}
\newcommand\co{\operatorname{co}}
\newcommand\id{\operatorname{id}}
\newcommand\End{\operatorname{End}}
\newcommand\Aut{\operatorname{Aut}}
\newcommand{\Z}{{\mathbb Z}}
\newcommand{\N}{{\mathbb N}}
\newcommand{\ydgamma}{{}^{\Bbbk\Gamma}_{\Bbbk\Gamma}\mathcal{YD}}
\newcommand{\ydh}{{}^H_H\mathcal{YD}}
\newcommand\toba{{\mathfrak B }}
\newcommand\cD{\mathcal D }
\newcommand{\I}{{\mathbb I}}
\newcommand{\ad}{\operatorname{ad}_c}
\newcommand{\ord}{\operatorname{ord}}
\newcommand{\pbw}[1]{\scaleto{x}{5.5pt}_{#1}}
\newcommand{\lpbw}[1]{y_{#1}}
\newcommand{\td}[1]{\bm\tilde{#1}}
\newcommand{\cP}[3]{\mathcal{P}_{#1,#2}(#3)}
\newcommand{\mt}[1]{\mathtt{#1}}
\def\pf{\begin{proof}}
	\def\epf{\end{proof}}
\def\ot{\otimes}
\begin{document}
	\title[Liftings of Nichols algebras of type $B_3$]
	{Liftings of Nichols algebras of type $B_3$}

	\author[ D. Bagio, G. A. Garc\'ia, O. M\'arquez]
	{D. Bagio, G. A. Garc\'ia, O. M\'arquez}

	\thanks{2010 Mathematics Subject Classification: 16T05.\\
		\textit{Keywords:} Quantum groups; Hopf algebra.\\
	The first author was partially financed by Funda\c c\~ao de Amparo a Pesquisa e 
	Inova\c c\~ao do Estado de Santa Catarina (FAPESC), Edital 21/2024. 
	The second author thankfully was partially financed by support by CAPES-PRINT, 
	Proc. 88887.895167/2023-00, CONICET and Secyt (Argentina)}
	\address{\noindent Departamento de Matem\'atica, Universidade Federal de Santa Catarina,
		88040-900, Florianópolis, SC, Brazil}
	\email{d.bagio@ufsc.br} 
	\email{oscar.marquez.sosa@ufsc.br}
	\address{\noindent G. A. G. : Departamento de Matem\'atica, Facultad de Ciencias Exactas,
		Universidad Nacional de La Plata. CONICET. Calle 47 y Calle 115, (1900)
		La Plata, Argentina.}
	\email{ggarcia@mate.unlp.edu.ar}

	\begin{abstract} 
	We give an explicit presentation of a 
	family of finite-dimensional pointed Hopf algebras over an algebraically closed field of characteristic zero 
	that constitute all liftings of Nichols algebras of diagonal Cartan type $B_{3}$ over a 
	finite abelian group.
	\end{abstract}
	
	\maketitle
	
\section{Introduction}\label{sec:introduction}

Let $\Bbbk$ be an algebraically closed field of characteristic $0$. 
The question of classifying Hopf algebras over $\Bbbk$ of a given dimension 
or with a given property goes back to the seventies. Despite of more than almost 
50 years, few general results are known. One of the most efficient methods
to study Hopf algebras with the Chevalley Property (i.e. its coradical is a 
Hopf subalgebra)
is the \emph{lifting method} 
coined by N. Andruskiewitsch and H.-J. Schneider in \cite{AS2}. 
It provides a perfect setting for describing this type of Hopf algebras through
a factorization by cosemisimple Hopf algebras and braided graded Hopf algebras.
This allows Lie theory to enter into the picture with (generalized) Dynkin diagrams, Cartan matrices and roots systems.

\smallbreak
A distinguished family of Hopf algebras with the Chevalley Property is that 
of pointed Hopf algebras, where the coradical is a group algebra. Let 
us describe in few words the lifting method in this case.
Let $A$ be a finite-dimensional Hopf algebra over a field $\Bbbk$ with coradical
$A_0=\Bbbk \Gamma$, where $\Gamma$ is a finite group. 
Then the coradical filtration is a Hopf algebra filtration 
and the corresponding graded Hopf algebra decomposes as the bosonization 
$\operatorname{gr} A\simeq R\# \Bbbk\Gamma$, where $R$ is a Hopf algebra in the
category of Yetter-Drinfeld modules $\ydgamma$. When $\Gamma$ is an abelian group,
all possible braided Hopf algebras $R$ are given by finite-dimensional 
Nichols algebras $\toba(V)$ of diagonal type. The latter are completely understood
thanks to the work of Heckenberger \cite{H2} and Angiono \cite{An}, see 
also \cite{AA} for a comprehensive reading. Therefore, to describe 
explicitly the Hopf algebra $A$ one has to 
present all possible deformations or \emph{liftings} 
of the bosonization $\toba(V)\# \Bbbk\Gamma$.

\smallbreak
A Nichols algebra $\toba(V)$
is completely determined by the braided vector space $(V,c)$
corresponding to the elements in degree one. The braided
vector space is said to be of diagonal type if 
there exists a basis $\{x_i\,:\,i\in \I_n\}$ of $V$, where $\I_n=\{1,2,\ldots,n\}$, such that 
$c(x_i\otimes x_j)=q_{ij}x_j\otimes x_i$. The braiding $c$ is 
called of {\it Cartan type} if 
\begin{equation}\label{eq:Cartan-condition}
	q_{ij}q_{ji} = q_{ii}^{a_{ij}},\quad \text{ for all }i,j\in\I_{n},
\end{equation}
for a generalized Cartan matrix
$(a_{ij})_{i,j\in \I_{n}}$. It turns out that, for all $i\in \I_{n}$,
$q_{ii}$ is a power of a unique $q_{i_{0}i_{0}}$ 
in each connected component of the Cartan 
matrix, and the latter is
a root of unity. 
When the Cartan matrix is indecomposable, one call this parameter simply 
by $q$ and its 
order by $N$.

\smallbreak
The liftings of finite-dimensional Nichols algebras of Cartan type  over a finite abelian group $\Gamma$
were completely described by Andruskiewitsch and Schneider in \cite{AS}. 
Such classification depends on a {\it data $\cD$ of 
Cartan type for $\Gamma$} together with two families of parameters which satisfy a recursion formula.
This recursion depends on the set of positive roots associated with the Cartan matrix. As the latter may be quite a bit set, an explicit description 
of the Hopf algebras giving the liftings is known only in few cases.
For ge\-ne\-ra\-lized Cartan matrices of type $A_n$, the liftings
(over a finite abelian group $\Gamma$)
were given in \cite{AS3} assuming that the parameter $q$ is a root of unity of order $N>3$. 
More recently in \cite{AAG}, the authors used a technique developed in
\cite{AAGMV}, which relies on cocycle deformations and bi-Galois objects, 
to present an explicit algorithm to compute liftings. As a main
result all liftings of finite-dimensional Nichols algebras of Cartan type $A$
over a cosemisimple Hopf algebra $H$ are classified. Helbig  
computed in \cite{Hel}, by another method and using a computer program, 
the liftings of all Nichols algebras with Cartan matrix of type $A_2$, 
some Nichols algebras with Cartan matrix of type $B_2$, and some 
Nichols algebras of two Weyl equivalence classes of non-standard type.
The explicit description for generalized Cartan matrix of type $B_n$ is 
only known when $n=2$ and $N\neq 5$; it was presented by Beattie, D\v{a}sc\v{a}lescu and Raianu
in \cite{Bea} by using 
\emph{ad-hoc} techniques. Finally, Garc\'ia Iglesias and Jury Giraldi
gave in \cite{GIJ} the liftings for Cartan type $G_{2}$ over a cosemisimple Hopf algebra by using the 
technique of cocycle deformations and bi-Galois objects. It is worth noting that 
the description of these algebras takes more than $20$ pages 
and it involves the use of a computer program.

\smallbreak
In these notes, we compute all liftings of Cartan type $B_3$
under the assumption that $N>5$ by following the approach of \cite{Bea}.
The aim of this manuscript is to have such a description at hand to compare the results obtained 
by using a new algorithm that describes the liftings as quantum subgroups of simple quantum groups 
at roots of unity. By using short exact sequences of Hopf algebras, 
the liftings would correspond to subgroups of the corresponding simple
Lie group $G$ conjugated by a unipotent element. This is a project under development 
where we obtained partial results in type $A$ and $B$.

\smallbreak
The paper is organized as follows. In Section 2, we introduce the necessary background for the rest of the paper, which includes: 
Yetter-Drinfeld modules, bosonization, Nichols algebras of diagonal (and Cartan) type and lifting of Nichols algebra. 
Proposition \ref{prop-useful} is a general result about $q$-commutation in an associative algebra, which can be useful in various contexts. 
The deformations of the power of root vector relations are computed in Section 3. The description of such deformations involves long 
but explicit calculations. 
For instance, suppose that $\alpha$ is a positive root of type $B_3$ and $y_{\alpha}^N=u_{\alpha}(\mu)$ is a root vector relation 
(see \eqref{root-vectors} for details). To determine $u_{\alpha}(\mu)$, 
one need to calculate $\Delta (y^N_{\alpha})$, which might be quite involved, especially 
when $\alpha$ has length $4$ and $5$. 
Finally, we present in Theorem \ref{lifting-b3} in Section 4, all liftings of Cartan type $B_3$ when $N>5$.


\section{Preliminaries}

		
\subsection{Notation and conventions}
We denote the natural numbers by $\N$, and $\N_0=\N\cup \{0\}$. 
For $k<\ell \in\N_0$, we set $\I_{k,\ell}=\{n \in \N_0: k \leq n \leq \ell\}$
and $\I_{\ell}=\I_{1, \ell}$.
We fix an algebraically closed field $\Bbbk$ of characteristic 0 and
we denote $\Bbbk^\times=\Bbbk-\{0\}$. 
All vector spaces and algebras are over $\Bbbk$. 
Given a Hopf algebra $H$ with  antipode  $\Ss$, the group of 
group-like elements in $H$ is denoted by $G(H)$. For $g,h\in G(H)$, 
the linear space of $(g,h)${\it -primitive  (or skew primitive)} 
elements is  
$\cP{g}{h}{H}\coloneqq \{x\in H\,:\,\Delta(x)=  g\ot x
+x\ot h \}$. 
Given $\chi\in \widehat{G(H)}$ and $g,h\in G(H)$, let $\mathcal{P}^{\chi}_{g,h}(H)=
\{x\in \cP{g}{h}{H}\,:\,lxl^{-1}=\chi(l)x,\,\text{for all } l\in G(H)\}$.
We also write $\mathcal{P}(H)=\cP{1}{1}{H}$
for the set of \emph{primitive elements}, 
$\mathcal{P}_{1}(H)$ for the set of all skew-primitive elements, $\mathcal{P}_g(H)=\cP{g}{1}{H}$ and
$\mathcal{P}^{\chi}_g(H)=\mathcal{P}^{\chi}_{g,1}(H)$.

Let  $q\in \Bbbk$.  For $n \in \N$, the $q$-numbers are defined by

\begin{align*}
	{(0)}_q   &:=   1, & &\qquad {(n)}_q  := 	{\textstyle \sum\limits_{s=0}^{n-1}} \, q^s, \qquad\qquad {(n)}_q!  := \,  {\textstyle \prod\limits_{s=0}^n} {(s)}_q \\[.3em]
	\binom{n}{k}_{\!q} &:= \frac{\,{(n)}_q!\,}{\;{(k)}_q! {(n-k)}_q!},& &\binom{n}{i_1,\ldots,i_k}_{\!q} := \frac{\,{(n)}_q!\,}{\textstyle \prod\limits_{s=1}^k {(i_s)}_q!}, \quad \text{where } i_1+\cdots +i_k=n.    
\end{align*}

\subsection{Some elementary identities}

The following general results will be used to prove some technical lemmas.

\begin{prop}\label{prop-useful}
 Let $B$ be an associative algebra, $q,\gamma_1,\gamma_2\in \Bbbk$ and $\gamma=\gamma_1\gamma_2(1-q)$. Let $z_1,\ldots,z_{2k+1}, w_{1},\ldots,w_{k} \in B$ be such that 
 \begin{align*}
z_iz_j=\begin{cases}
	qz_jz_i+w_i, & \text{ if }\,  i<j \text{ and } i+j=2k+2,\\
	qz_jz_i,& \text{ if }\, i<j \text{ and } i+j\neq 2k+2,
\end{cases}
\end{align*}
and set
$\mt{u}_k:=z_1+\ldots+ z_k+\gamma_1z_{k+1}$ and $\mt{v}_k:=\gamma_2z_{k+1}+z_{k+2}+\ldots+z_{2k+1}$. If $\sum_{i=1}^{k}w_i=-\gamma z^2_{k+1}$ then 
\[
\mt{u}_k\mt{v}_k=q\mt{v}_k\mt{u}_k.
\]
In particular, if $q$ is a primitive root of unity of order $n$ and $\gamma_1+\gamma_2=1$ then 
\[(z_1+\ldots+z_{2k+1})^n=z^n_1+\ldots+z^n_{k-1}+(\gamma_1^n+\gamma_2^n)z^n_k+z^n_{k+1}+\ldots+z_{2k+1}^n.\]
\end{prop}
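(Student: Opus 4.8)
The plan is to establish the commutation relation $\mt{u}_k\mt{v}_k=q\,\mt{v}_k\mt{u}_k$ by expanding both products and comparing them term by term, and then to read off the power formula from the $q$-binomial theorem. Write $a_i$ for the coefficient of $z_i$ in $\mt{u}_k$ (so $a_i=1$ for $i\in\I_k$ and $a_{k+1}=\gamma_1$) and $b_j$ for the coefficient of $z_j$ in $\mt{v}_k$ (so $b_{k+1}=\gamma_2$ and $b_j=1$ for $k+2\le j\le 2k+1$). First I would expand $\mt{u}_k\mt{v}_k=\sum_{i=1}^{k+1}\sum_{j=k+1}^{2k+1}a_ib_j\,z_iz_j$. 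Every product occurring here has $i\le k+1\le j$, hence $i\le j$, so the straightening hypothesis applies uniformly: the diagonal term $i=j=k+1$ contributes $\gamma_1\gamma_2 z_{k+1}^2$, while for $i<j$ one gets $z_iz_j=qz_jz_i$ except on the symmetric pairs $i+j=2k+2$, where an extra summand $w_i$ appears.

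The combinatorial point to check is that the pairs $(i,j)$ with $i<j$, $i\le k+1\le j$ and $i+j=2k+2$ are exactly $(i,2k+2-i)$ for $i\in\I_k$, and that each of these carries coefficient $a_ib_j=1$; hence the corrections sum to $\sum_{i=1}^k w_i$. This gives
\[
\mt{u}_k\mt{v}_k=\gamma_1\gamma_2 z_{k+1}^2+\sum_{i=1}^k w_i+q\sum_{i<j}a_ib_j\,z_jz_i.
\]
Expanding $\mt{v}_k\mt{u}_k$ directly, every product has the form $z_jz_i$ with $j\ge k+1\ge i$, already in the descending order appearing above and needing no straightening, so $\mt{v}_k\mt{u}_k=\gamma_1\gamma_2 z_{k+1}^2+\sum_{i<j}a_ib_j\,z_jz_i$. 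Subtracting $q$ times this cancels the double sum and leaves
\[
\mt{u}_k\mt{v}_k-q\,\mt{v}_k\mt{u}_k=\gamma_1\gamma_2(1-q)z_{k+1}^2+\sum_{i=1}^k w_i=\gamma z_{k+1}^2+\sum_{i=1}^k w_i,
\]
which vanishes by the hypothesis $\sum_{i=1}^k w_i=-\gamma z_{k+1}^2$. This proves $\mt{u}_k\mt{v}_k=q\,\mt{v}_k\mt{u}_k$.

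For the power formula, I would use that $\gamma_1+\gamma_2=1$ forces $\mt{u}_k+\mt{v}_k=z_1+\cdots+z_{2k+1}$. Since $\mt{u}_k\mt{v}_k=q\,\mt{v}_k\mt{u}_k$ and $q$ is a primitive $n$-th root of unity, the $q$-binomial theorem together with $\binom{n}{j}_q=0$ for $0<j<n$ yields $(\mt{u}_k+\mt{v}_k)^n=\mt{u}_k^n+\mt{v}_k^n$. To finish I would observe that $\mt{u}_k$ involves only indices $\le k+1$ and $\mt{v}_k$ only indices $\ge k+1$, so no symmetric pair $i+j=2k+2$ with $i\ne j$ lies inside either factor; consequently the generators within each factor pairwise $q$-commute with no $w$-correction, and the $q$-multinomial identity gives $\mt{u}_k^n=z_1^n+\cdots+z_k^n+\gamma_1^n z_{k+1}^n$ and $\mt{v}_k^n=\gamma_2^n z_{k+1}^n+z_{k+2}^n+\cdots+z_{2k+1}^n$. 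Adding these yields the stated expression, the shared variable $z_{k+1}$ acquiring the coefficient $\gamma_1^n+\gamma_2^n$.

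The computation is essentially bookkeeping, and I expect the only delicate step to be the index analysis in the first part: verifying that every product in $\mt{u}_k\mt{v}_k$ has $i\le j$, so that the straightening rule applies with the correct orientation, and that the $w_i$-producing pairs are precisely those indexed by $\I_k$, each with coefficient $1$. Once this is settled, the cancellation against $q\,\mt{v}_k\mt{u}_k$ and the two applications of the $q$-(multi)nomial theorem are routine.
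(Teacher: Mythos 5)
Your argument is correct and follows essentially the same route as the paper's proof: establish the $q$-commutation $\mt{u}_k\mt{v}_k=q\,\mt{v}_k\mt{u}_k$ by direct expansion and then apply the $q$-binomial and $q$-multinomial theorems, with your index bookkeeping simply filling in what the paper dismisses as ``immediate to check.'' One small remark: your conclusion correctly attaches the coefficient $\gamma_1^n+\gamma_2^n$ to $z_{k+1}^n$, which is what the definitions of $\mt{u}_k,\mt{v}_k$ and the later applications (e.g.\ in Lemmas \ref{lem:def-degree-3} and \ref{lem-grade-4}) require; the placement of that coefficient on $z_k^n$ in the displayed statement appears to be an off-by-one typo.
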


\begin{proof}
It is immediate to check that
\begin{align*}
\mt{u}_k\mt{v}_k-q\mt{v}_k\mt{u}_k&=(w_1+\ldots+w_k)+\gamma_1\gamma_2(1-q)z^2_{k+1}	\\
&=-\gamma z^2_{k+1}+\gamma_1\gamma_2(1-q)z^2_{k+1}=0.
\end{align*}
Then one has that 
\begin{align*}
	(z_1+\ldots+z_{2k+1})^n&=(\mt{u}_k+\mt{v}_k)^n=\sum_{j=0}^{n}\binom{n}{j}_{q}\mt{u}^{n-j}_k\mt{v}^j_k=\mt{u}^n_k+\mt{v}^n_k\\
                           &=z^n_1+\ldots+z^n_{k-1}+(\gamma_1^n+\gamma_2^n)z^n_k+z^n_{k+1}+\ldots+z_{2k+1}^n,
\end{align*}
which finishes the proof.
\end{proof}

The following lemma follows easily by induction. 
 
\begin{lema} \label{lem-rec-grado3}
Let $B$ be an associative algebra, $\mt{x},\mt{y},\mt{z} \in B$ and  
$n\in \N$. Assume that
\begin{align*}
	\mt{x}\mt{z}=q\mt{z}\mt{x}+\mt{y},\quad \mt{y}\mt{z}=q^2\mt{z}\mt{y},\quad \mt{x}\mt{y}=q^2\mt{y}\mt{x},
\end{align*}
for some $q\in \Bbbk^{\times}$. Then
\[
	\big(\mt{z}+\mt{x}\big)^n=\sum_{i+2j+k=n} \binom{n}{i,2j,k}_{q}\left(\prod_{l=0}^{j-1}\big(2l+1\big)_{q}\right)\mt{z}^i\mt{y}^j\mt{x}^k.
	\]
\qed
\end{lema}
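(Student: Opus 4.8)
The plan is to prove the formula by induction on $n$, multiplying the claimed normal form on the right by $\mt{z}+\mt{x}$. The starting point is a normal-ordering rule for pushing a single $\mt{z}$ from the right past a monomial $\mt{z}^i\mt{y}^j\mt{x}^k$. First I would establish, by an inner induction on $k$, that $\mt{x}^k\mt{z}=q^k\mt{z}\mt{x}^k+q^{k-1}(k)_q\,\mt{y}\mt{x}^{k-1}$: the inductive step multiplies by $\mt{x}$ on the left and uses $\mt{x}\mt{z}=q\mt{z}\mt{x}+\mt{y}$ together with $\mt{x}\mt{y}=q^2\mt{y}\mt{x}$, the coefficient collapsing via $q^{k-1}+q^2\!\cdot\! q^{k-2}(k-1)_q=q^{k-1}(k)_q$. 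Combining this with $\mt{y}^j\mt{z}=q^{2j}\mt{z}\mt{y}^j$ (immediate from $\mt{y}\mt{z}=q^2\mt{z}\mt{y}$) yields the single-monomial rule
\[
\mt{z}^i\mt{y}^j\mt{x}^k\,\mt{z}=q^{2j+k}\,\mt{z}^{i+1}\mt{y}^j\mt{x}^k+q^{k-1}(k)_q\,\mt{z}^i\mt{y}^{j+1}\mt{x}^{k-1}.
\]

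Writing $S_n$ for the asserted right-hand side and $P_j:=\prod_{l=0}^{j-1}(2l+1)_q$, the inductive step computes $S_n(\mt{z}+\mt{x})$ by applying this rule to each monomial. Each term $\mt{z}^i\mt{y}^j\mt{x}^k$ of $S_n$ thus contributes three monomials: $q^{2j+k}\mt{z}^{i+1}\mt{y}^j\mt{x}^k$ and $q^{k-1}(k)_q\,\mt{z}^i\mt{y}^{j+1}\mt{x}^{k-1}$ (from multiplication by $\mt{z}$), together with $\mt{z}^i\mt{y}^j\mt{x}^{k+1}$ (from multiplication by $\mt{x}$). I would then fix a target triple $(a,b,c)$ with $a+2b+c=n+1$ and read off which source monomials of $S_n$ produce $\mt{z}^a\mt{y}^b\mt{x}^c$, namely the triples $(a-1,b,c)$, $(a,b-1,c+1)$ and $(a,b,c-1)$, each of weight $n$.

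Collecting these contributions, the coefficient of $\mt{z}^a\mt{y}^b\mt{x}^c$ in $S_n(\mt{z}+\mt{x})$ is
\[
q^{2b+c}\binom{n}{a-1,2b,c}_{q}P_b+q^{c}(c+1)_q\binom{n}{a,2b-2,c+1}_{q}P_{b-1}+\binom{n}{a,2b,c-1}_{q}P_b,
\]
and the goal is to show this equals $\binom{n+1}{a,2b,c}_{q}P_b$. Using $P_b=(2b-1)_qP_{b-1}$ to absorb the middle $P_{b-1}$ and factoring out the common quantity $\binom{n+1}{a,2b,c}_q P_b/(n+1)_q$, the simplifications $(a)_q!/(a-1)_q!=(a)_q$, $(2b)_q!(c)_q!/\big((2b-2)_q!(c+1)_q!\big)=(2b)_q(2b-1)_q/(c+1)_q$ and $(c)_q!/(c-1)_q!=(c)_q$ reduce the whole identity to the $q$-number addition formula
\[
(n+1)_q=(c)_q+q^{c}(2b)_q+q^{2b+c}(a)_q,
\]
which holds because $a+2b+c=n+1$. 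The base case $n=0$ is trivial, both sides equalling $1$.

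The only delicate point is the boundary bookkeeping: when $a$, $b$ or $c$ vanishes, the corresponding source triple has a negative entry, so its multinomial is read as $0$, and this is exactly matched by the vanishing of the relevant summand in the addition formula (with $(0)_q=0$ in the ratios above); hence no term is miscounted. Everything else is the $q$-analogue of a Pascal-type recursion, which is why the statement indeed \emph{follows easily by induction}.
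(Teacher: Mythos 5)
Your argument is correct and is exactly the induction on $n$ that the paper alludes to when it says the lemma ``follows easily by induction'': the normal-ordering rule $\mt{x}^k\mt{z}=q^k\mt{z}\mt{x}^k+q^{k-1}(k)_q\,\mt{y}\mt{x}^{k-1}$, the resulting three-term recursion on coefficients, and the reduction to $(n+1)_q=(c)_q+q^{c}(2b)_q+q^{2b+c}(a)_q$ all check out, including the boundary cases. The only cosmetic remark is that the paper's convention sets $(0)_q:=1$ (so that $(n)_q!=\prod_{s=0}^{n}(s)_q$ works), so your boundary bookkeeping should be phrased as ``the source monomial with a negative exponent is absent'' rather than via $(0)_q=0$ --- which is in fact how you primarily argue it.
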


\begin{cor}
	 \label{cor-rec-grado3}
Let $B$, $\mt{x},\mt{y},\mt{z}$, $q$ and $n$ as in the previous Lemma. 
If $n$ is odd and $q$ is a primitive $n$-th root of unity then
\[
\big(\mt{z}+\mt{x}\big)^n=\mt{z}^n+\mt{x}^n
\]
\qed
\end{cor}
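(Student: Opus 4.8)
The plan is to substitute the root-of-unity hypothesis into the closed expansion furnished by Lemma~\ref{lem-rec-grado3},
\[
(\mt{z}+\mt{x})^n=\sum_{i+2j+k=n} \binom{n}{i,2j,k}_{q}\left(\prod_{l=0}^{j-1}(2l+1)_{q}\right)\mt{z}^i\mt{y}^j\mt{x}^k,
\]
and to verify that every summand vanishes save the two extremal ones. Concretely, I would show that the only multi-indices contributing are $(i,j,k)=(n,0,0)$ and $(0,0,n)$, giving $\mt{z}^n$ and $\mt{x}^n$ with coefficient $1$ each.

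The mechanism is the vanishing behaviour of $q$-numbers at a primitive $n$-th root of unity: $(s)_q=0$ exactly when $n\mid s$, whence the Gaussian binomial satisfies $\binom{n}{m}_q=0$ for $0<m<n$ and $\binom{n}{0}_q=\binom{n}{n}_q=1$. I would rewrite the $q$-multinomial as a product of ordinary $q$-binomials, $\binom{n}{i,2j,k}_q=\binom{n}{i}_q\binom{n-i}{2j}_q$ (using $i+2j+k=n$). Since $\binom{n}{i}_q=0$ for $0<i<n$, a nonzero coefficient forces $i\in\{0,n\}$.

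It remains to treat the two admissible values of $i$. If $i=n$ then $j=k=0$, the product factor is empty, and the summand is exactly $\mt{z}^n$. If $i=0$ then $2j+k=n$ and the coefficient carries the factor $\binom{n}{2j}_q$; here the hypothesis that $n$ is \emph{odd} enters decisively, as it rules out $2j=n$, forcing $0\le 2j<n$. Consequently $\binom{n}{2j}_q=0$ unless $2j=0$, i.e. $j=0$ and $k=n$, which yields $\mt{x}^n$. Adding the two surviving summands gives $(\mt{z}+\mt{x})^n=\mt{z}^n+\mt{x}^n$.

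I expect the one point needing care to be formal rather than computational: to avoid spurious $0/0$ indeterminacies one should work throughout with the Gaussian binomials as genuine polynomials in $q$ (evaluated afterwards), rather than with the factorial quotients directly. The other matter worth emphasizing is exactly where oddness is used---solely to exclude the middle index $2j=n$. Indeed, for even $n$ the value $j=n/2$ would survive and contribute a term $\left(\prod_{l=0}^{n/2-1}(2l+1)_q\right)\mt{y}^{n/2}$, so the statement genuinely depends on $n$ being odd.
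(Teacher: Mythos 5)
Your proof is correct and takes essentially the approach the paper intends: the corollary is stated with the proof omitted, the evident argument being precisely the one you give, namely substituting into the expansion of Lemma~\ref{lem-rec-grado3} and using the vanishing of the Gaussian (multi)nomial coefficients at a primitive $n$-th root of unity, with oddness of $n$ ruling out the middle index $2j=n$.
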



\subsection{Yetter-Drinfeld modules and bosonizations}
\label{subsec:YD-mod-boson}
In this subsection we recall the definition of 
Yetter-Drinfeld modules over Hopf algebras and
some well-known notions like 
the process called \emph{bosonization}.

\smallbreak
Let $H$ be a Hopf algebra. A \textit{left Yetter-Drinfeld module} $M$ over $H$
is a left $H$-module $(M,\cdot)$ and a left $H$-comodule 
$(M,\delta)$ with $\delta(m)=m_{(-1)}\ot m_{(0)} \in H\ot M$ 
for all $m\in M$, satisfying the compatibility 
condition 
$$ 
\delta(h\cdot m) = h_{(1)}m_{(-1)}\cS(h_{(3)})\ot h_{(2)}\cdot m_{(0)},
\qquad \text{for all }\, m\in M,\, h\in H. 
$$
Together with $H$-module and $H$-comodule maps these
modules constitute the 
category $\ydh$ of left Yetter-Drinfeld modules over $H$. 
Suppose that $H$ has bijective antipode. Then $\ydh$ is  
a braided monoidal category: for any $M, N \in \ydh$, 
the braiding $c_{M,N}:M\ot N \to N\ot M$ is given by 
$$
c_{M,N}(m\ot n) = m_{(-1)}\cdot n \ot m_{(0)},\qquad \text{for all }\, m\in M,\, n\in N.
$$

A Hopf algebra in $\ydh$ is a \emph{braided Hopf algebra}.
Let $R$ be a braided Hopf algebra with multiplication $m:R\otimes R\to R$ and braiding $c=c_{R,R}:R\otimes R\to R\otimes R$. The braided commutator between $x,y\in R$ is defined by
\[
[x,y]_c=xy-m\circ c(x\otimes y).
\]
For a primitive element $x\in R$,
the braided adjoint action of $x$ on $R$ is given by
\[
\ad(x)(y)=[x,y]_c,\quad y\in R.
\]

\bigbreak
In case $H$ is a group algebra of a 
finite abelian group, the category $\ydh$ admits a quite explicit
description.
Let $\Gamma$ be an abelian group and write $\widehat{\Gamma}$ for
the character group of $\Gamma$. Then the category $_{\Gamma}^{\Gamma}\mathcal{YD}$
of Yetter-Drinfeld modules 
over $\Bbbk\Gamma$ is just the category of left $\Bbbk[\Gamma]$-modules that are 
$\Gamma$-graded vector spaces $V= \bigoplus_{g\in \Gamma} V_{g}$ such 
that each homogeneous component $V_{g}$ is stable under the action of $\Gamma$, 
i.e. $h\cdot V_{g} \subseteq V_{g}$, for all $h,\,g \in \Gamma$.
Morphisms are simply homogeneous $\Gamma$-maps. 
The braiding $c_{V,W} : V\ot W \to W\ot V$ is given by 
$$
c_{V,W}(v\ot w) = (g\cdot w) \ot v,
\qquad \text{for all }\,  
v\in V_{g},\,w\in W.
$$

Let $V \in\ _{\Gamma}^{\Gamma}\mathcal{YD}$ be finite-dimensional
with $\dim V = n$.
In case $\Gamma$ is finite,
 the braiding $c:= c_{V,V} $ is of \emph{diagonal type} or simply 
\emph{diagonal}, i.e., there exists a basis $\{x_{1},\ldots, x_{n}\}$ such that 
$$
c(x_{i}\ot x_{j}) = q_{ij}x_{j} \ot x_{i},
$$ 
for all $ i,j\in \I_{n}$, with $q_{ij}\in \Bbbk^{\times}$.
In such a case, we say that 
$\mathbf{q}= (q_{ij})_{i,j\in \I_{n}}  $
is the  braiding matrix.
Roughly speaking, since the 
action of $\Gamma$ respects the grading,
one may decompose $V$ as 
$V =  \bigoplus_{g\in \Gamma,\chi \in \widehat{\Gamma}} V_{g}^{\chi}$,
where for any $\chi \in \widehat{\Gamma}$ we set 
$V^{\chi}=\{x\in V\, : \, h\cdot  x= \chi(h) x,\text{ for all } h\in \Gamma\}$
and $V_{g}^{\chi} = V_{g}\cap V^{\chi}$. Then 
$V$ has a basis $\{x_{i}\}_{i\in \I_{n}}$ consisting on homogeneous elements 
$x_{i} \in V_{g_{i}}^{\chi_{i}}$ for some $g_{i}\in \Gamma$ and 
$\chi_{i}\in \widehat{\Gamma}$. In particular, 
$$
c(x_{i}\ot x_{j}) = (g_{i}\cdot x_{j}) \ot x_{i} = \chi_{j}(g_{i}) x_{j} \ot x_{i},
$$
for all $i,j \in \I_{n}$. 
Then, one takes $ q_{ij} = \chi_{j}(g_{i}) $ for all $i,j \in \I_{n}$. 

\bigbreak
Let $H$ be a Hopf algebra with bijective antipode and 
$R$ a braided Hopf algebra in $\ydh$. 
The procedure to obtain a 
usual Hopf algebra from $R$ and $H$ is called 
the Majid-Radford product or \emph{bosonization}, and it is usually
denoted by $R \#H$. As vector space $R \# H = R\otimes H$, and the
multiplication and comultiplication are given by the smash-product 
and smash-coproduct,
respectively. Explicitly, for all $r, s \in R$ and $g,h \in H$,
\begin{align*}
(r \# g)(s \#h) & = r(g_{(1)}\cdot s)\# g_{(2)}h,\\ 
\com(r \# g) & =r^{(1)} \# (r^{(2)})_{(-1)}g_{(1)} \ot 
(r^{(2)})_{(0)}\# g_{(2)},
\end{align*}
where $\com_{R}(r) = r^{(1)}\ot r^{(2)}$ denotes the comultiplication in $R\in \ydh$.
If $r\in R$ and $h\in H$, then we identify $r=r\# 1$ and  
$h=1\# h$; in particular we have $rh=r\# h$ and $hr=h_{(1)}\cdot r\# h_{(2)}$.
Clearly, the map $\iota: H \to R\#H$ given by $\iota(h) = 1\#h$ for all
$h\in H$ is an injective Hopf algebra map, and the map $\pi: R\#H \to H$ 
given by $\pi(r\#h) = \eps_{R}(r)h$ for all $r\in R$, $h\in H$
is a surjective Hopf algebra map such that $\pi \circ \iota = \id_{H} $. 
Moreover, it holds that $R = (R\#H)^{\co \pi}$. 

Conversely, let $B$ be a Hopf algebra with bijective antipode and
$\pi: B\to H$ a Hopf algebra epimorphism admitting 
a Hopf algebra section $\iota: H\to B$ such that $\pi\circ\iota =\id_{H}$.
Then $R=B^{\co\pi}$ is a braided Hopf algebra in $\ydh$ and $B\simeq R\# H$
as Hopf algebras.


\subsection{Nichols algebras of diagonal type}\label{sec:Nichols-alg-diag}

Let $V$ be a vector space. A \emph{braiding} 
on $V$ is a linear map $c \in \Aut(V\ot V) $  
which is a solution to the \textit{braid equation}, that is, 
$$
(c\ot \id)(\id\ot c)(c\ot \id)=(\id\ot c)(c\ot \id)(\id\ot c) \qquad\text{ in }\End(V\ot V\ot V).
$$ 
For short, we say that $(V,c)$ is a braided vector space.
A braiding $c$ is said to be of diagonal type or simply 
diagonal if there exists a basis $\{x_{1},\ldots, x_{n}\}$ of $V$ such that $c(x_{i}\ot x_{j}) = q_{ij}x_{j} \ot x_{i}$ 
for all $ i,j\in \I_{n}$, with $q_{ij}\in \Bbbk^{\times}$.

\smallbreak
Any braided vector space of diagonal type 
can be realized as a Yetter-Drinfeld module over the group algebra of an abelian group. Indeed, 
for $(V, c)$ a finite-dimensional braided 
vector space of diagonal type,
take $\Gamma$ to be an abelian group 
containing elements  $g_{1}\ldots, g_{n}$ 
and define the characters
$\chi_{1},\ldots, \chi_{n} \in \widehat{\Gamma}$ by
$\chi_{j}(g_{i}) = q_{ij}$ for all $i, j \in\I_{n}$.
Then, $V$ is a Yetter-Drinfeld module over 
$\Bbbk \Gamma$ with $ x_{i} \in V_{g_{i}}^{\chi_{i}}$ for 
all $i\in\I_{n}$. Depending on the arithmetic properties
of the scalars $q_{ij}$ one has to choose the abelian 
group (the elements $g_{i}$ and the characters $\chi_{i}$)
accordingly to satisfy the previous equation.
For instance, the
free abelian group of rank equal to $\dim V=n$ 
with basis $g_{1}\ldots, g_{n}$ always fulfills the 
condition, so one may choose an appropriate quotient of it.

\smallbreak
Let $V$ be a Yetter-Drinfeld module over the group algebra $\Bbbk \Gamma$. 
To simplify the exposition of the notion of Nichols algebra, we assume that 
$V$ is finite-dimensional, even though this is not necessary in most places. 
The tensor algebra $T(V)$ on $V$ is a braided Hopf algebra in 
$_{\Bbbk\Gamma}^{\Bbbk\Gamma}\mathcal{YD}$ whose coproduct is 
defined by taking the elements of $V$ as 
primitive elements.
This algebra 
admits two gradings, the one on $\N_{0}$ given by the tensor 
powers and another one defined using the basis
$\{x_{1},\ldots, x_{n}\}$ of $V$.
For the latter, let $\alpha_{1},\ldots , \alpha_{n}$
be the canonical basis of $\Z^{n}$. Then $T(V)$ is a
$\Z^{n}$-graded algebra with the grading determined by
fixing $\deg(x_{i})=\alpha_{i}$ for all $i\in \I_{n}$.

\smallbreak
The \emph{Nichols algebra} $\toba(V )$ associated with a
braided vector space $(V,c)$
is the graded braided Hopf algebra given by the 
quotient of $T (V)$ by the maximal element  
$I(V )$ of the family of all
homogeneous two-sided ideals 
$I \subseteq \bigoplus_{k\geq2} T^{k} (V )$ that
are Yetter-Drinfeld submodules  and 
Hopf ideals of $T (V )$. In general, it is a hard problem to 
describe explicitly the Nichols algebra (the ideal $I(V)$ of relations) 
of a given braided vector space. 

\smallbreak
For the case of diagonal braidings one has different
characterizations of the ideal $I(V)$ and all 
finite-dimensional Nichols algebras are completely described. 
Heckenberger gave in \cite{H2} a classification 
of the braided vector spaces of diagonal type giving 
rise to finite-dimensional Nichols algebras 
in terms of generalized
Cartan matrices and generalized Dynkin diagrams.
Following \cite{AA}, one may split the 
families of these braided vector spaces
in several classes
depending on the properties of the diagonal braiding:
\emph{Standard type} (related to Lie
algebras in the Killing-Cartan classification), 
\emph{Super type} (related to the finite-dimensional contragredient Lie superalgebras in characteristic 0), 
\emph{Modular type} (related to the finite-dimensional contragredient Lie
(super)algebras in positive characteristic)
and \emph{UFO type}. 

\smallbreak
Among the braidings of standard type one has the
braidings of \emph{Cartan type}: let $(V,c)$ be a braided 
vector space of diagonal type with braiding matrix 
$\mathbf{q}= (q_{ij})_{i,j\in \I_{n}}$. The braiding
is said to be of Cartan type if there exists a generalized
Cartan matrix $\mathbf{C}=(a_{ij})_{i,j\in \I_{n}}$ satisfying \eqref{eq:Cartan-condition}.
It turns out that the Nichols algebra $\toba(V)$ associated
with such a braiding is finite-dimensional if and only 
if the Cartan matrix is of finite type
and the scalars $q_{ii}$ are (non-trivial) roots of unity 
of finite order, see \cite{H1}.
In this paper, we will be dealing with Nichols algebras
associated with diagonal braidings of Cartan type $B_{3}$ and 
$\ord q_{ii} = N >5$ for all $1\leq i\leq 3$.

	
\subsection{Nichols algebra of Cartan type $B_3$} 
\label{section-nichols-algebra}
	
Let $(V,c)$ be a braided vector space whose
braiding is diagonal of Cartan type $B_{3}$ with 
Cartan matrix and braiding matrix 
\[
\mathbf{C}=\begin{pmatrix}
	2 & -1 & 0\\
	-1 & 2 & -1\\
	0 & -2& 2.
\end{pmatrix},\qquad
\mathbf{q}
=\begin{pmatrix}
	q_{11} & q_{12} & q_{13}\\
	q_{21} & q_{22} & q_{23}\\
	q_{31} & q_{32}& q_{33}
\end{pmatrix},
\]
respectively. Write 
$\{x_1,x_2,x_3\}$ for the corresponding basis of $V$.
Since $q_{ij}q_{ji} = q_{ii}^{a_{ij}}$ 
for all $i,j\in\I_{3}$, we have that
\begin{align*}
q_{12}q_{21}&=q_{11}^{-1},& q_{21}q_{12}&=q_{23}q_{32}=q_{22}^{-1},	\\
q_{32}q_{23}&=q_{33}^{-2},& q_{13}q_{31}&=q_{31}q_{13}=1.
\end{align*}
In particular, one has $q_{11}=q_{22}=q_{33}^2$.
%
%

\smallbreak
From now on we assume $N= \ord q_{33}$ is odd. 
Then, $q_{11}$ and $q_{22}$ 
are also primitive roots of unity of order $N>1$. 
In order to present the Nichols algebra $\toba(V)$,
we need to introduce some more notation. We follow \cite{AS}
throughout these notes.

Consider the set 
of indexes $\tilde{\I}=\{1,2,3,\tilde{2},\tilde{3}\}$ 
containing $\I_{3}$. In it we define 
the following total order
%
$$
1< 2<3<\tilde{3}<\tilde{2}
$$
%

We now extend the definition of $q_{kl}$ and $a_{kl}$ 
for all $k,l \in \tilde{\I}$. 
For $j\in \tilde{\I}$, we set  
	\begin{equation}\label{eqs sin tilde kI}
		[j]=\begin{cases}
			j, & \text{ if } j\in\I_3,\\
			l,& \text{ if } j=\tilde{l}, \text{ with } l \in\I_3.
	\end{cases}\end{equation} 
Then, the extended definitions are given by:
\[
q_{kl}:=q_{[k][l]}\,\text{ and }\, a_{kl}:=a_{[k][l]}.
\]
In other words, the coefficients for the extended indices are 
the same as those for the indices  obtained by ``drooping'' the tilde,  if necessary.
In particular, note that the identities 
$q_{ij}q_{ji}=q_{ii}^{a_{ij}}$ and $q_{ij}^N=1$ remain valid for all $i,j\in \tilde{\I}_{3}$. 
Given $a,b,c,d\in \tilde{\I}$, we write
\[q_{ba,dc}=\prod_{\substack{a\leq i\leq b\\ c\leq j\leq d}} q_{ij}.\]

Let $\Phi$ denote the root system of type $B_{3}$ and write $\com =
\{\alpha_1,\alpha_2,\alpha_3\}$ for the set of simple roots 
with $\alpha_3$ the shortest one. 
Then, the other positive roots in $\Phi^{+}$ are: 
\begin{equation}
\label{nonsimple-positive-roots}	
\begin{aligned}
\alpha_{21}&=\alpha_{2}+\alpha_1,&  \alpha_{32}&=\alpha_{3}+\alpha_2,&  \alpha_{31}&=\alpha_3+\alpha_2+\alpha_1,&\\
 \alpha_{\td{3}2}&=2\alpha_{3}+\alpha_2,&\alpha_{\td{3}1}&=2\alpha_3+\alpha_2+\alpha_1,&\alpha_{\td{2}1}&=2\alpha_3+2\alpha_2+\alpha_1.&&
\end{aligned}
\end{equation}

Now, to each simple root $\alpha_{i}$ we associate the 
linear generator $x_{i}$ of $V$, so we identify 
$x_{i}= x_{\alpha_{i}}$ for all $i\in \I_{3}$. 
Also, for the non-simple positive roots 
we define the following elements in the tensor algebra $T(V)$
by using the braided commutator: 
	\begin{equation} \label{root-vector}
		\begin{aligned}
x_{\alpha_{21}} = \pbw{21}&= [x_{2},x_{1}]_{c}=
\pbw{2}\pbw{1}-q_{21}\pbw{1}\pbw{2},\\
x_{\alpha_{32}} = \pbw{32}&= 
[x_{3},x_{2}]_{c} =\pbw{3}\pbw{2}-q_{32}\pbw{2}\pbw{3},\\ 
x_{\alpha_{31}} = \pbw{31}&= 
[x_{3},x_{21}]_{c}= \pbw{3}\pbw{21}-q_{3,21}\pbw{21}\pbw{3}, \\ 
x_{\alpha_{\td{3}2}} = \pbw{\td{3}2}&=
[x_{3},x_{32}]_{c}= \pbw{3}\pbw{32}-q_{3,32}\pbw{32}\pbw{3},\\
x_{\alpha_{\td{3}1}} = \pbw{\td{3}1}&=
[x_{3},x_{31}]_{c}=\pbw{3}\pbw{31}-q_{3,31}\pbw{31}\pbw{3},\\ 
x_{\alpha_{\td{2}1}}=\pbw{\td{2}1}& = 
[x_{2},\pbw{\td{3}1}]_{c}=
			\pbw{2}\pbw{\td{3}1}-q_{2,\td{3}1}\pbw{\td{3}1}\pbw{2}.	
		\end{aligned}	
\end{equation}
These elements $x_{ab}=x_{\alpha_{ab}}$ are called 
\emph{root vectors}. The Nichols 
algebra $\toba(V)$ is presented by generators and relations 
as follows. For more details see \cite[Section 2.1]{AS}.

\begin{prop}\label{prop-nichols}
	The Nichols algebra $\toba(V)$ is generated by $\pbw{1},\pbw{2},\pbw{3}$ with defining relations 
	\begin{align}
		\label{serre-relations} \ad(\pbw{i})^{1-a_{ij}}({\pbw{j}})&=0,\qquad i\neq j,\ i,j\in \I_{3},\\[.2em]
		\label{root-vectors} \pbw{\alpha}^N&=0,\qquad   \alpha\in \Phi^{+}.
		\end{align}	
	Moreover, $\toba(V)$ has a PBW-basis given by
\[
	\{\pbw{3}^{n_9}\pbw{\td{3}2}^{n_8}\pbw{32}^{n_7}\pbw{2}^{n_6}\pbw{\td{2}1}^{n_5}\pbw{\td{3}1}^{n_4}\pbw{31}^{n_3}\pbw{21}^{n_2}\pbw{1}^{n_1}:\, 0\leq n_i< N\}.
	\]
	\qed
\end{prop}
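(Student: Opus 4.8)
The plan is to obtain this statement as the specialization to type $B_3$ of the general presentation theorem for Nichols algebras of finite Cartan type, following \cite{AS}. The two structural inputs I would lean on are Kharchenko's PBW theorem for Nichols algebras of diagonal type (see \cite{AA}), which produces a basis of ordered monomials in root vectors indexed by the positive roots, with heights equal to the orders of the self-braidings $q_{\alpha\alpha}$; and Heckenberger's classification \cite{H1,H2}, which guarantees that for a finite Cartan datum the positive roots are exactly those of the associated finite root system and controls the defining relations.

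First I would record finite-dimensionality and fix the combinatorial skeleton. Since the braiding is of finite Cartan type $B_3$ and each $q_{ii}$ is a root of unity of order $N$, the algebra $\toba(V)$ is finite-dimensional by \cite{H1}. I would then take the convex order on $\Phi^{+}$ associated with the total order $1<2<3<\td{3}<\td{2}$ on $\tilde{\I}$, check that it is convex (compatible with a reduced expression of the longest element of the Weyl group of $B_3$), and verify that the iterated braided commutators in \eqref{root-vector} are precisely the Kharchenko PBW generators $\pbw{\alpha}$ attached to this order. This is a finite bookkeeping task over the nine positive roots listed in \eqref{nonsimple-positive-roots}.

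Next I would pin down the heights. For a root $\alpha=\sum_i c_i\alpha_i$ one has $q_{\alpha\alpha}=\prod_{i,j} q_{ij}^{c_ic_j}$, and using \eqref{eq:Cartan-condition} together with $q_{11}=q_{22}=q_{33}^2$ a short computation shows that $q_{\alpha\alpha}$ equals $q_{33}$ for the three short roots $\alpha_3,\alpha_{32},\alpha_{31}$ and $q_{33}^2$ for the six long roots. Because $N=\ord q_{33}$ is odd, $\gcd(2,N)=1$, so $q_{33}^2$ also has order $N$; hence every height equals $N$, which yields the range $0\le n_i<N$ in the PBW basis. The oddness of $N$ enters exactly here, while the running assumption $N>5$ serves to exclude the small orders at which the type-$B$ diagonal data develop extra defining relations beyond \eqref{serre-relations} and \eqref{root-vectors}, by the classification in \cite{H2,AS}.

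The decisive step, and the main obstacle, is the completeness of the relations: that the defining ideal $I(V)$ is generated by \eqref{serre-relations} and \eqref{root-vectors}, with nothing further. The clean way to close this is a dimension count. Let $\widehat{B}$ be the algebra presented by $\pbw{1},\pbw{2},\pbw{3}$ subject to \eqref{serre-relations} and \eqref{root-vectors}. The relations hold in $\toba(V)$ (the Serre relations by the Cartan condition \eqref{eq:Cartan-condition}, and the powers $\pbw{\alpha}^N$ by the standard coproduct computation, cf. \cite{AS}), so there is a surjection $\widehat{B}\twoheadrightarrow\toba(V)$. Using the $q$-commutation rules among the root vectors induced by \eqref{root-vector}, I would show by a Diamond-lemma reduction that the proposed monomials span $\widehat{B}$, giving $\dim\widehat{B}\le N^{9}$; on the other hand Heckenberger's dimension formula gives $\dim\toba(V)=\prod_{\alpha\in\Phi^{+}}N=N^{9}$. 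A surjection between spaces of equal finite dimension is an isomorphism, which simultaneously proves that \eqref{serre-relations} and \eqref{root-vectors} are all the relations and that the listed monomials form a PBW basis. The genuinely delicate point is the spanning, that is, the straightening rules that reorder the $\pbw{\alpha}$ past one another; this is precisely where the explicit convex order and the finite-type combinatorics of $B_3$ are used, everything else being cited from the classification.
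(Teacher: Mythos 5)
The paper offers no proof of this proposition---it is stated with a \verb|\qed| and a pointer to \cite[Section 2.1]{AS}---and your outline is a correct reconstruction of exactly the standard argument behind that citation: Kharchenko's PBW theorem and Heckenberger's classification to identify the root vectors attached to a convex order and their heights (with $N$ odd ensuring $\ord q_{33}^2=\ord q_{33}=N$, so all heights equal $N$), verification that \eqref{serre-relations} and \eqref{root-vectors} hold in $\toba(V)$, and the dimension count $\dim\widehat{B}\le N^{9}=\dim\toba(V)$ forcing the surjection $\widehat{B}\twoheadrightarrow\toba(V)$ to be an isomorphism. The only quibble is a side remark: the hypothesis $N>5$ is not what secures this presentation (odd $N$ already suffices for Cartan type $B_3$ in \cite{AS}); it is needed later, in Lemma \ref{lem-for-lifting}~(iii)--(iv), to rule out deformations of the quantum Serre relations in the liftings.
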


The relations in  \eqref{serre-relations} are called \emph{quantum Serre relations} while the ones in \eqref{root-vectors} are called {\it (powers of) root vectors} relations.

\begin{obs}\label{rmk:PBW-relations}
 Set $\xi_i=(1-q_{33}^{-i})$, $i\in \N$. 
 Below we give the complete list of commutation relations 
 between root vectors. These relations 
will be used freely throughout the article without being cited.
\begin{align*}
[\pbw{32},\pbw{1}]_{c}&=\pbw{31}, &
[\pbw{\td{3}2},\pbw{2}]_{c}&=q_{32,2}\,  \xi_1 \pbw{32}^2,\\
[\pbw{\td{3}2},\pbw{1}]_{c}&=\pbw{\td{3}1}, &
[\pbw{32},\pbw{21}]_{c}&=q_{32,2}\, \xi_2 \pbw{2}\pbw{31},\\
[\pbw{32},\pbw{31}]_{c}&=q_{32,2}\xi_2\pbw{2}\pbw{\td{3}1} -q_{32,2} \pbw{\td{2}1},  &
[\pbw{\td{3}1},\pbw{21}]_{c}&= q_{31,21} \xi_1\pbw{31}^2,\\
[\pbw{\td{2}1},\pbw{1}]_{c}	&=q_{2,\td{2}1} \xi_2\pbw{\td{3}1}\pbw{21}  -q_{31,1}q_{2,32} \xi_1 \pbw{31}^2,&
[\pbw{\td{3}2},\pbw{31}]_{c}&= q_{\td{3}2,32}\xi_2\pbw{32}\pbw{\td{3}1},\\
[\pbw{\td{3}2},\pbw{21}]_{c}&= q_{32,2} \xi_2\pbw{32} \pbw{31} -(q_{\td{3}2,2})q_{3,3} \xi_1\xi_2  \pbw{2}\pbw{\td{3}1} \span +(q_{\td{3}2,2}) q_{3,3}   \pbw{\td{2}1}.
\end{align*}

The remaining relations that have not yet been specified are given by 
$[\pbw{\alpha},\pbw{\beta}]_{c}=0$.
\end{obs}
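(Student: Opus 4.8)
The plan is to deduce every identity from the defining expressions in \eqref{root-vector} by combining two elementary tools with the quantum Serre relations \eqref{serre-relations}. The first tool is the pair of braided Leibniz rules: for homogeneous $u,v,w$,
\[
[u,vw]_{c}=[u,v]_{c}\,w+q_{uv}\,v\,[u,w]_{c},\qquad
[uv,w]_{c}=q_{vw}\,[u,w]_{c}\,v+u\,[v,w]_{c},
\]
where $q_{uv}$ is the braiding scalar attached to the degrees of $u$ and $v$. Combining them gives the braided Jacobi identity
\[
[[u,v]_{c},w]_{c}=[u,[v,w]_{c}]_{c}-q_{uv}\,[v,[u,w]_{c}]_{c}-q_{vw}\big(q_{uv}q_{vu}-1\big)\,[u,w]_{c}\,v,
\]
whose last two summands both vanish as soon as $[u,w]_{c}=0$. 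Alongside these I record the instances of \eqref{serre-relations} that will be used repeatedly: the orthogonality $[\pbw{1},\pbw{3}]_{c}=[\pbw{3},\pbw{1}]_{c}=0$ coming from $a_{13}=0$, the relation $[\pbw{3},\pbw{\td{3}2}]_{c}=\ad(\pbw{3})^{3}(\pbw{2})=0$ (here $1-a_{32}=3$), and $\ad(\pbw{2})^{2}(\pbw{1})=\ad(\pbw{2})^{2}(\pbw{3})=\ad(\pbw{1})^{2}(\pbw{2})=0$.

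I would run the computation by induction on the height of $\alpha+\beta$, following the convex order $\pbw{3}<\pbw{\td{3}2}<\pbw{32}<\pbw{2}<\pbw{\td{2}1}<\pbw{\td{3}1}<\pbw{31}<\pbw{21}<\pbw{1}$ underlying the PBW basis of Proposition \ref{prop-nichols}; each bracket $[\pbw{\alpha},\pbw{\beta}]_{c}$ with $\alpha$ before $\beta$ is then straightened into that basis. The relations $[\pbw{32},\pbw{1}]_{c}=\pbw{31}$ and $[\pbw{\td{3}2},\pbw{1}]_{c}=\pbw{\td{3}1}$ are essentially definitional: applying the Jacobi identity to $[[\pbw{3},\pbw{2}]_{c},\pbw{1}]_{c}$ and to $[[\pbw{3},\pbw{32}]_{c},\pbw{1}]_{c}$, the correction terms drop because $[\pbw{3},\pbw{1}]_{c}=0$, leaving $[\pbw{3},[\pbw{2},\pbw{1}]_{c}]_{c}=\pbw{31}$ and $[\pbw{3},\pbw{31}]_{c}=\pbw{\td{3}1}$. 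Before the harder relations I would settle the vanishing brackets that serve as inputs, most notably $[\pbw{32},\pbw{2}]_{c}=0$: its degree $\alpha_{3}+2\alpha_{2}$ is not a root and no root vector lies strictly between $\pbw{32}$ and $\pbw{2}$ in the order, so by convexity no PBW monomial survives.

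The identities carrying a single factor $\xi_{1}$ or $\xi_{2}$ --- those for $[\pbw{\td{3}2},\pbw{2}]_{c}$, $[\pbw{32},\pbw{21}]_{c}$, $[\pbw{\td{3}1},\pbw{21}]_{c}$ and $[\pbw{\td{3}2},\pbw{31}]_{c}$ --- follow the same mechanism, but now a Serre relation of multiplicity $\geq2$ intervenes and produces the defect $\xi_{i}=1-q_{33}^{-i}$. For instance, to obtain $[\pbw{\td{3}2},\pbw{2}]_{c}=q_{32,2}\,\xi_{1}\,\pbw{32}^{2}$ one writes $\pbw{\td{3}2}=[\pbw{3},\pbw{32}]_{c}$, applies the Jacobi identity with $(u,v,w)=(\pbw{3},\pbw{32},\pbw{2})$, discards $[\pbw{3},[\pbw{32},\pbw{2}]_{c}]_{c}=0$ by the previous step, and collapses the remaining terms using the self-commutator $[\pbw{32},\pbw{32}]_{c}=(1-q_{33})\pbw{32}^{2}$ (the self-braiding of $\pbw{32}$ equals $q_{33}$); the stated scalar then results from the braiding identities $q_{ij}q_{ji}=q_{ii}^{a_{ij}}$ and $q_{11}=q_{22}=q_{33}^{2}$. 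The other three close after one round of Leibniz and Jacobi in the same way.

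The main obstacle lies in the relations with more than one surviving monomial: $[\pbw{32},\pbw{31}]_{c}$ and $[\pbw{\td{2}1},\pbw{1}]_{c}$, and above all the three-term relation for $[\pbw{\td{3}2},\pbw{21}]_{c}$, which carries $\xi_{1}\xi_{2}$ and hence demands two nested Serre reductions. Here a single pass of the Jacobi identity does not terminate, because the intermediate brackets are themselves nonzero root vectors; one must substitute the single-$\xi$ relations already established and then straighten the resulting degree-$(\alpha+\beta)$ expression into the PBW basis, where the cancellations leave exactly the displayed monomials. The delicate point is bookkeeping: the same degree is reached through several bracketings, and coefficients such as $q_{\td{3}2,2}\,q_{3,3}\,\xi_{1}\xi_{2}$ only match after systematic use of the braiding relations above. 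Finally, every pair not listed satisfies $[\pbw{\alpha},\pbw{\beta}]_{c}=0$: for each such pair the degree-$(\alpha+\beta)$ component, expanded in root vectors $\pbw{\gamma}$ with $\alpha<\gamma<\beta$, has all coefficients zero, which the same reduction confirms. I expect the three two-term relations to absorb most of the effort.
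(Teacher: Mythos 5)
The paper offers no proof of this remark at all: the relations are simply listed (implicitly resting on \cite{AS} and direct computation), so there is no argument of the authors' to compare yours against. Judged on its own terms, your plan is the standard and correct one. The two braided Leibniz rules and the Jacobi identity you write down are exact (I verified the coefficient $-q_{vw}(q_{uv}q_{vu}-1)[u,w]_c\,v$), and the sample computations come out right: for $[\pbw{\td{3}2},\pbw{2}]_c$ your scheme gives $-q_{3,32}(1-q_{33})\pbw{32}^2=q_{32,2}\xi_1\pbw{32}^2$ using $q_{3,32}q_{32,3}=1$ and $q_{32,32}=q_{33}$, and the analogous computation for $[\pbw{\td{3}1},\pbw{21}]_c$ reproduces $q_{31,21}\xi_1\pbw{31}^2$. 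The identification of which PBW monomials can occur in each bracket via the convex order $\alpha_3<\alpha_{\td{3}2}<\alpha_{32}<\alpha_2<\alpha_{\td{2}1}<\alpha_{\td{3}1}<\alpha_{31}<\alpha_{21}<\alpha_1$ is also consistent with every line of the list.

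Two caveats. First, your disposal of the vanishing brackets (e.g.\ $[\pbw{32},\pbw{2}]_c=0$) appeals to the Levendorskii--Soibelman-type convexity property of the PBW basis, which is a genuine theorem (available via \cite{An}) but is not contained in Proposition \ref{prop-nichols} as stated; since several nonzero relations in the list are later fed into the Jacobi reductions, you should either cite that convexity result explicitly or note that these vanishings also follow directly from the quantum Serre relations (e.g.\ $[\pbw{32},\pbw{2}]_c$ is a nonzero scalar multiple of $\ad(\pbw{2})^2(\pbw{3})$, using $q_{23}q_{32}=q_{22}^{-1}$). Second, for the two- and three-term identities ($[\pbw{32},\pbw{31}]_c$, $[\pbw{\td{2}1},\pbw{1}]_c$ and especially $[\pbw{\td{3}2},\pbw{21}]_c$) you only describe the reduction strategy and defer the coefficient bookkeeping; this is where essentially all the work lies, so as written the proposal is a correct and workable plan rather than a complete verification.
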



\subsection{Liftings of Nichols algebras} Let $B$ be a graded algebra. 
A \emph{lifting} or a \emph{deformation} of $B$ is a 
filtered algebra $A$ such that the associated graded algebra
$\gr A$ satisfies that $\gr A \simeq B$. 

\smallbreak
For example, let $H = \Bbbk \Gamma$ be a group algebra 
and $R$ a connected ($R^{0}= \Bbbk$) graded 
braided Hopf algebra in 
$\ydh$. Then the bosonization $B=R\# \Bbbk \Gamma$ is a graded Hopf algebra with $B^{0} = B_{0} =\Bbbk \Gamma$. 
A lifting of $B$ is a filtered
Hopf algebra $A$ such that $\gr A \simeq R\# \Bbbk \Gamma$.
In case the associated filtration of $R$ coincides with the
coradical filtration, then we say that $R$ is coradically graded. This holds for example when $R$ is a Nichols algebra.
In such a case, the filtration of $A$ is
given by the coradical filtration.

\smallbreak
Consider now a Nichols algebra $\toba(V)$ associated with a 
diagonal braiding. 
As explained in \S \ref{sec:Nichols-alg-diag}, $\toba(V)$
can be realized
as an object in the category of Yetter-Drinfeld modules over
the group algebra of an abelian group. Assume further that 
$\dim V = n$
is finite. Then, one may find a finite abelian group
$\Gamma$ together with elements  
$g_{1}\ldots, g_{n} \in \Gamma$ and characters
$\chi_{1},\ldots, \chi_{n} \in \widehat{\Gamma}$ such that
$\chi_{j}(g_{i}) = q_{ij}$ for all $i, j \in\I_{n}$.
Then, $V$ is a Yetter-Drinfeld module over 
$\Bbbk \Gamma$ with $ x_{i} \in V_{g_{i}}^{\chi_{i}}$ for 
all $i\in\I_{n}$,
$\toba(V) \in\ _{\Gamma}^{\Gamma}\mathcal{YD}$ 
and the bosonization $B= \toba(V)\#\Bbbk \Gamma$
is a usual Hopf algebra. By abuse of notation, any
lifting of $B$ is called a lifting of $\toba(V)$ over $\Gamma$, or 
simply a lifting of $\toba(V)$,
if the structure given by 
the group $\Gamma$ is somehow understood from the context.  

\smallbreak
Liftings of finite-dimensional 
Nichols algebras of Cartan type over a finite abelian group $\Gamma$
are completely described
by Andruskiewitsch and Schneider in \cite{AS}. These are classified by
\emph{Data of Cartan type for $\Gamma$}
together with two families of parameters. Such a datum is a tuple 
$\cD = \cD(\Gamma, (g_{i})_{i\in \I_{n}}, (\chi_{i})_{i\in \I_{n}}, (a_{ij})_{i,j\in \I_{n}})$ where $g_{i} \in \Gamma$, $\chi_{i} \in \widehat{\Gamma}$
and $\mathbf{C}=(a_{ij})_{i,j\in \I_{n}}$
is a Cartan matrix of finite type satisfying the Cartan 
condition \eqref{eq:Cartan-condition} given by $q_{ij}q_{ji}=q_{ii}^{a_{ij}}$
with $q_{ij} = \chi_{j}(g_{i})$ for all $i,j\in \I_{n}$; here  
the scalars $q_{ii}$ are roots of unity of order $N_{i}$. Further,
if $\mathbf{C}$ is indecomposable, then $N_{i}=N_{j}$ for all $i,j\in \I_{n}$. 
In such a case, we write $N=N_{i}$ for all $i\in \I_{n}$.
As the authors claimed, 
\emph{the explicit classification of all data of finite Cartan type
for a given abelian group is a computational problem}.

\smallbreak
Write $\Phi$ for the root system associated with $\mathbf{C}$.
In case $\mathbf{C}$ is indecomposable, only one family of parameters
is needed, and it is used to deform the power of root vectors
relations \eqref{root-vectors}. 
In a nutshell, 
the liftings are given by a family
of finite-dimensional pointed Hopf algebras $u(\cD, \mu)$
where $\cD$ is a datum of Cartan type for $\Gamma$ and 
$\mu = (\mu_{\alpha})_{\alpha \in \Phi^{+}}$ is a family of elements
in $\Bbbk$ satisfying that $\mu_{\alpha}=0$
for all $\alpha \in  \Phi^{+}$ in case $g_{\alpha}^{N}= 1$ or 
$\chi_{\alpha}^{N}\neq \varepsilon$. 

\smallbreak
Explicitly,  
$u(\cD, \mu)$ is the algebra generated by the group $\Gamma$ and
by elements $y_{\alpha}$ with $\alpha\in \Phi^{+}$ satisfying the 
relations \eqref{root-vector} and the following ones
\begin{align}
\label{comm-relations}
\text{(Action of the group)} & & gy_{i}g^{-1} &= \chi_{i}(g)y_{i} 
\qquad \text{ for all }i \in \I_{n}, g\in \Gamma ,\\[.2em]
\label{def-serre-relations}
\text{ (Serre relations)} & &
\ad(y_{i})^{1-a_{ij}}(y_{j})&=0,\qquad \text{ for all } i\neq j,\ i,j\in \I_{n},\\[.2em]
\label{root-vectors-def} 
\text{ (Root vector relations)} & &  
 y_{\alpha}^N&=u_{\alpha}(\mu),\qquad   \text{ for all } \alpha\in \Phi^{+}.
\end{align}	
The elements $u_{\alpha}(\mu)$ for $\alpha\in \Phi^{+}$ lie in the 
augmentation ideal of the Hopf subalgebra 
$\Bbbk[g_{i}^{N}\, |\, i\in \I_{n}]$. They are defined recursively in 
\cite[Section 2.1]{AS}: 
\begin{equation}\label{eqn:relation-recursion}
u_{\alpha}(\mu) = \mu_{\alpha} (1 - g_{\alpha}^{N_{J}}) + 
\sum_{\beta,\gamma\neq 0, \beta+\gamma = \alpha}
t_{\beta,\gamma}^{\alpha} \mu_{\beta}u_{\gamma}(\mu), 
\end{equation}
where $t_{\beta,\gamma}^{\alpha}\in \Bbbk$ are certain scalars determined
by the recursion. 

\bigbreak
The main problem that we solve in these notes is to 
give a closed explicit formula for each $u_{\alpha}(\mu)$. To achieve
our goal, we follow the approach of \cite{Bea} where the problem
was solved for the case of type $B_{2}$.

\bigbreak
The Hopf algebra structure of $u(\cD, \mu)$ is determined by 
$$
\com(g) = g\ot g, \qquad 
\com(y_{i}) = y_{i}\ot 1 + g_{i}\ot y_{i}\qquad\text{ for all }g\in \Gamma,\,
i\in\I_{n},
$$
Since $u(\cD, \mu)$ is generated by group-like and primitive elements,
it is a pointed Hopf algebra, and one can show that its dimension  
is $N^{|\Phi^{+}|}|\Gamma|$, for more details see \cite[Theorem 0.1]{AS}.

\bigbreak 
For the rest of the paper, we fix a braided vector space $V$ of Cartan type
$B_{3}$ with linear basis $\{x_{1},x_{2},x_{3}\}$, braiding 
matrix $\mathbf{q}=(q_{ij})_{i,j\in\I_{3}}$, 
associated Nichols algebra $\toba(V)$ as in Proposition \ref{prop-nichols}, and 
$\Gamma$ a finite abelian group with elements  
$g_{1}, g_{2}, g_{3} \in \Gamma$ and characters
$\chi_{1},\chi_{2}, \chi_{3} \in \widehat{\Gamma}$ such that
$\chi_{j}(g_{i}) = q_{ij}$ for all $i, j \in\I_{3}$.
Then, $V$ is a Yetter-Drinfeld module over 
$\Bbbk \Gamma$ with $ x_{i} \in V_{g_{i}}^{\chi_{i}}$ for 
all $i\in\I_{3}$,
$\toba(V) \in\ _{\Gamma}^{\Gamma}\mathcal{YD}$ 
and the bosonization $B= \toba(V)\#\Bbbk \Gamma$
is  a finite-dimensional pointed Hopf algebra 
with dimension $N^9|\Gamma|$. 
Also, we will denote
\begin{align}
	\label{xi-scalar} &\xi_i=(1-q_{33}^{-i})\in \Bbbk,\,\,\, i\in \mathbb{N},\\
	\label{notation-g}g_{ba}&=\prod_{a\leq j\leq b}g_j,\,\,\,\,\text{where } g_{c}=g_{[c]}\,\text{ and }\,a,b,c\in \tilde{\I}\quad(\text{see }\eqref{eqs sin tilde kI}).\\
	\label{notation-chi} \chi_{ba}&=\prod_{a\leq j\leq b}\chi_j,\,\,\,\,\text{where } \chi_{c}=\chi_{[c]}\,\text{ and }\,a,b,c\in \tilde{\I}.   \\
	\label{notation-chi-alpha} \chi_{\alpha_{ba}}&=\chi_{ba},\qquad\quad g_{\alpha_{ba}}=g_{ba}, \quad \text{for }\,\alpha_{ba}\in \Phi^{+}\setminus\Delta\quad(\text{see }\eqref{nonsimple-positive-roots}).\\[.3em]
	\label{notation-beta}\beta_1&=1/(1+q_{33})\qquad \beta_2=q_{33}/(1+q_{33}), \quad \beta=\beta_1\beta_2\xi_2.                       
\end{align}


\section{Deformation of defining relations of $\toba(V)$}
\label{section-deform}

In our goal to give a closed formula for the elements 
$u_{\alpha}(\mu)$ in \eqref{eqn:relation-recursion}, we analyse in
this section 
the possible deforming relations that must hold in any lifting of the Nichols algebra. 
Since $|\Phi^{+}| = 9$, we need to consider the deformation of 
$9$ power root vector relations coming from \eqref{root-vectors}. 
Three of them  $x_{1}^{N}$,
$x_{2}^{N}$ and $x_{3}^{N}$ are power of primitive elements, 
and correspond to simple roots.
The other six correspond to sum of simple roots and are power of elements
of higher degree. Following the definition in \eqref{root-vector}, we 
have two elements $x_{21}$, $x_{32}$ of degree two, two elements 
$x_{21}$, $x_{32}$ of degree three, 
one element $x_{\td{3}1} $ of degree four and 
one element $x_{\td{2}1}$ of degree five.

Observe that the ideal of relations $I(V)$ of $\toba(V)$ is generated by elements 
that are pri\-mi\-tive in $T(V)$ or 
are primitive modulo an ideal generated by primitive elements. 
This gives a stratification of the ideal $I(V)$ as in \cite{AAGMV}.
We analyse the possible deformations recursively using this stratification.  

\smallbreak
Fix $A$ a lifting of  $B=\toba(V)\#\Bbbk \Gamma$
and write $A_{k}$ for the $k$-th term in the coradical 
filtration of $A$;
in particular, $A_{0} = \Bbbk \Gamma$.
To simplify notation, we assume that $\gr A= B$.

\subsection{The space of skew-primitive elements in $A$}\label{subsec:primitive-elem}

We start by studying the space
of skew-primitive elements of $A$.
By \cite[Lemma 5.4 and the proof of Theorem 5.5]{AS2} 
we may choose $y_{i} \in A_{1}$ 
such that $\bar{y_{i}} = x_{i}$ in 
$B_{1}$ and $y_{i}\in P_{g_{i}}^{\chi_{i}}(A)$. 
In particular, one has that 
$\Delta(\lpbw{i})=\lpbw{i}\otimes 1+g_i\otimes \lpbw{i}$ and  $h\lpbw{i}=\chi_i(h)\lpbw{i}h$, for all $h\in \Gamma$.  
For $h=g_{j}$ this reads
$g_j\lpbw{i}=\chi_i(g_j)\lpbw{i}g_j=q_{ji}\lpbw{i}g_j$.

%
%

\smallbreak

The following lemma is \cite[Lemma 5.1]{AS3};
it will be used throughout the paper.	
	
\begin{lema}\label{lem-g-primitivos}
For all $g \in\Gamma, $ and $\chi \in \hat{\Gamma}$ with $\chi\neq \varepsilon$, we have
		\begin{enumerate}[leftmargin=*,label=\rm{(\roman*)}]
			\item  $\mathcal{P}^{\chi}_{g}(A)\neq 0$ if and only if there exists $k\in \I_{3}$ such that $g=g_k$ and $\chi = \chi_k$,\vspace{.1cm}
			\item $\mathcal{P}^{\varepsilon}_{g}(A)=\Bbbk(1-g)$. \qed
		\end{enumerate}
\end{lema}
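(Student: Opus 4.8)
The plan is to reduce the computation of the skew-primitive spaces of $A$ to those of $\gr A\cong\toba(V)\#\Bbbk\Gamma$, where the coradically graded structure of the Nichols algebra makes them transparent, and then to transport the answer back along the coradical filtration. First I would record the trivial inclusions. For every $g\in\Gamma$ the element $1-g$ satisfies $\Delta(1-g)=g\otimes(1-g)+(1-g)\otimes 1$ and is $\Gamma$-invariant, so $\Bbbk(1-g)\subseteq\mathcal{P}^{\varepsilon}_{g}(A)$; this gives one inclusion of (ii). For the ``if'' direction of (i), the elements $y_k\in\mathcal{P}^{\chi_k}_{g_k}(A)$ chosen before the statement are nonzero, since their images $x_k$ in $\gr A$ are nonzero, whence $\mathcal{P}^{\chi_k}_{g_k}(A)\neq 0$. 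What remains is to prove the two reverse assertions: that no bidegree $(g,1)$ other than $(g_k,1)$ carries a nontrivial skew-primitive, and that the $\varepsilon$-isotypic part in any bidegree $(g,1)$ is exhausted by $\Bbbk(1-g)$.

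Since a skew-primitive always lies in $A_1$, I would invoke the Taft--Wilson theorem to decompose $A_1=\Bbbk\Gamma\oplus\bigoplus_{g,h\in\Gamma}\mathcal{P}'_{g,h}(A)$, where $\mathcal{P}'_{g,h}(A)$ is a $\Gamma$-stable complement of $\Bbbk(g-h)$ inside $\mathcal{P}_{g,h}(A)$, and likewise for $\gr A$. The canonical identification $A_1/A_0\cong(\gr A)_1/(\gr A)_0$ of $\Gamma$-graded $\Gamma$-module coalgebras restricts, in each bidegree, to injections $\mathcal{P}'_{g,h}(A)\hookrightarrow\mathcal{P}'_{g,h}(\gr A)$ compatible with the $\widehat{\Gamma}$-isotypic decomposition induced by conjugation. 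As the total dimensions $\sum_{g,h}\dim\mathcal{P}'_{g,h}$ coincide on the two sides, each injection is in fact an isomorphism respecting characters. Consequently it suffices to determine, for each $g\in\Gamma$ and $\chi\in\widehat{\Gamma}$, whether $\mathcal{P}^{\chi}_{g,1}(\gr A)$ is nontrivial.

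In $\gr A=\toba(V)\#\Bbbk\Gamma$ the Nichols algebra $\toba(V)$ is coradically graded, so the skew-primitives surviving modulo the coradical are exactly its degree-one part $V$; concretely, for $x_i\in V^{\chi_i}_{g_i}$ and $t\in\Gamma$ one computes $\Delta(x_i\#t)=x_i\#t\otimes t+g_it\otimes x_i\#t$ together with $h(x_i\#t)h^{-1}=\chi_i(h)\,(x_i\#t)$, so that $x_i\#t\in\mathcal{P}^{\chi_i}_{g_it,\,t}(\gr A)$, and these elements span all nontrivial skew-primitives. Restricting to target grouplike $1$ forces $t=1$, leaving $g=g_i$ with character $\chi_i$; hence $\mathcal{P}^{\chi}_{g,1}(\gr A)$ is nontrivial precisely when $(g,\chi)=(g_k,\chi_k)$ for some $k\in\I_3$, which by the previous paragraph yields (i). For (ii), since $\chi_i(g_i)=q_{ii}$ has order $N>1$ we have $\chi_i\neq\varepsilon$, so no nontrivial $(g,1)$-skew-primitive is $\Gamma$-invariant; therefore the $\varepsilon$-isotypic part of $\mathcal{P}_{g,1}(A)$ collapses to $\Bbbk(1-g)$ (which is $0$ when $g=1$), proving the equality. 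The delicate point throughout is the transport step: one must argue that passing to $\gr A$ neither kills nor creates skew-primitives within a fixed bidegree and character, and this is precisely where the Taft--Wilson decomposition combined with the dimension count is essential.
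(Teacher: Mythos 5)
Your argument is correct: the reduction via Taft--Wilson to the bigraded components of $A_1/A_0\cong(\gr A)_1/(\gr A)_0$, the computation of the skew-primitives of the bosonization $\toba(V)\#\Bbbk\Gamma$ in degree one, and the observation that $\chi_k\neq\varepsilon$ (since $\chi_k(g_k)=q_{kk}\neq1$) together give both statements. The paper itself offers no proof --- it simply cites \cite[Lemma 5.1]{AS3} --- and your write-up is essentially the standard argument underlying that reference, so there is nothing to flag.
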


\smallbreak
The next result is another version of 
\cite[Lemma 2.1]{Bea}. We give its proof for completeness.

\begin{lema} \label{lem-for-lifting}
	The following assertions hold
	\begin{enumerate}[leftmargin=*,label=\rm{(\roman*)}]
		\item If $\chi^N_i\neq \varepsilon$ then $\mathcal{P}^{\chi^N_i}_{g^N_i}(A)=0$, for all $i=1,2,3$. \vspace{.1cm}
		\item For $\alpha\in \Phi^{+}\setminus\Delta$, either $\chi_{\alpha}^N=\varepsilon$ or $\mathcal{P}^{\chi_{\alpha}^N}_{g^N_{\alpha}}(A)=0$.\vspace{.1cm}
		\item Let $i,j\in \I_3$ with $i\neq j$. If $\chi_j\chi_{i}^{1-a_{ij}}=\epsilon$ then $N\in \{3,5\}$.
		\item Let $i,j\in \I_3$ with $i\neq j$. If $N\neq 3,5$ then $\mathcal{P}^{\chi_j\chi_{i}^{1-a_{ij}}}_{g_{j}g_{i}^{1-a_{ij}}}(A)=0$.
	\end{enumerate}
\end{lema}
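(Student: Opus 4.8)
The four statements all reduce to Lemma \ref{lem-g-primitivos} together with the arithmetic of the braiding matrix, the only inputs being the identities $q_{ij}^N=1$ for all $i,j\in\I_3$, the Cartan condition $q_{ij}q_{ji}=q_{ii}^{a_{ij}}$, and $q_{11}=q_{22}=q_{33}^2$ (each of order $N$, since $N$ is odd). Throughout I write $q_{ii}=q_{33}^{d_i}$ with $d_1=d_2=2$ and $d_3=1$. Parts (i) and (ii) admit a single uniform argument. Writing $\alpha=\sum_l c_l\alpha_l$, so that $\chi_\alpha=\prod_l\chi_l^{c_l}$ and $g_\alpha=\prod_l g_l^{c_l}$, I would assume towards a contradiction that $\chi_\alpha^N\neq\varepsilon$ yet $\mathcal{P}^{\chi_\alpha^N}_{g_\alpha^N}(A)\neq 0$. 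Since the character is nontrivial, Lemma \ref{lem-g-primitivos}(i) yields $k\in\I_3$ with $g_\alpha^N=g_k$ and $\chi_\alpha^N=\chi_k$. Evaluating the latter at $g_k$ and using $\chi_l(g_k)=q_{kl}$ together with $q_{kl}^N=1$ gives
\[
q_{kk}=\chi_k(g_k)=\chi_\alpha(g_k)^N=\prod_l q_{kl}^{Nc_l}=1,
\]
contradicting $\ord q_{kk}=N>1$. Taking $\alpha$ simple proves (i); taking $\alpha\in\Phi^+\setminus\Delta$ proves (ii).

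For (iii) I would start from $\chi_j\chi_i^{1-a_{ij}}=\varepsilon$ and evaluate it at $g_i$ and at $g_j$, obtaining $q_{ij}q_{ii}^{1-a_{ij}}=1$ and $q_{jj}q_{ji}^{1-a_{ij}}=1$. The first gives $q_{ij}=q_{ii}^{a_{ij}-1}$; substituting into the Cartan condition forces $q_{ji}=q_{ii}$, and then the second identity reduces to $q_{jj}=q_{ii}^{a_{ij}-1}$. In powers of $q_{33}$ this reads $q_{33}^{\,d_j-d_i(a_{ij}-1)}=1$, so $N$ divides $d_j-d_i(a_{ij}-1)$. Running through the six ordered pairs and inserting the $B_3$ Cartan entries, this exponent equals $6$ for the pairs inside $\{1,2\}$, equals $5$ for the pairs inside $\{2,3\}$, and equals $3$ for the pairs inside $\{1,3\}$; as $N$ is odd and $N>1$, in every case $N\in\{3,5\}$.

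For (iv), assume $N\neq 3,5$ and $\mathcal{P}^{\chi_j\chi_i^{1-a_{ij}}}_{g_jg_i^{1-a_{ij}}}(A)\neq 0$. By the contrapositive of (iii) the character $\chi_j\chi_i^{1-a_{ij}}$ is nontrivial, so Lemma \ref{lem-g-primitivos}(i) supplies $k\in\I_3$ with $g_k=g_jg_i^{1-a_{ij}}$ and $\chi_k=\chi_j\chi_i^{1-a_{ij}}$. Computing $q_{kk}=\chi_k(g_k)$ with both relations and collapsing the mixed terms via $(q_{ij}q_{ji})^{1-a_{ij}}=q_{ii}^{a_{ij}(1-a_{ij})}$ would give
\[
q_{kk}=q_{jj}\,q_{ii}^{1-a_{ij}}.
\]
In powers of $q_{33}$ the left-hand side is $q_{33}^{d_k}$ with $d_k\in\{1,2\}$, while the right-hand side is $q_{33}^{\,d_j+d_i(1-a_{ij})}$ with exponent lying in $\{3,5,6\}$; hence $N$ divides an integer in $\{1,2,3,4,5\}$, and since $N$ is odd and $N>1$ this forces $N\in\{3,5\}$, a contradiction.

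The bulk of the work is the case-by-case bookkeeping over the six pairs $(i,j)$, which is routine. Two points deserve attention. First, the simplification $q_{kk}=q_{jj}q_{ii}^{1-a_{ij}}$ in (iv) requires using the group relation, the character relation and the Cartan condition \emph{simultaneously}, so that the cross terms combine into this clean form; this is the only genuinely delicate computation. Second, (iii) plays an essential logical role in (iv): it is exactly what excludes the alternative $\chi_j\chi_i^{1-a_{ij}}=\varepsilon$, in which case Lemma \ref{lem-g-primitivos}(ii) only gives $\mathcal{P}^{\varepsilon}_{g}(A)=\Bbbk(1-g)$, a space that need not vanish. Thus (iv) genuinely depends on the hypothesis $N\neq 3,5$ through (iii).
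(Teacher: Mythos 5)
Your parts (iii) and (iv) are correct. Part (iii) is essentially the paper's argument: both routes land on $q_{jj}q_{ii}^{1-a_{ij}}=1$ and the same list of exponents $\{3,5,6\}$. Part (iv) takes a genuinely different (and slightly cleaner) route: the paper pairs the two identities so as to compute $q_{ik}q_{ki}$, obtaining $q_{ii}^{2-a_{ij}-a_{ik}}=1$, whereas you compute $q_{kk}=\chi_k(g_k)$ directly and obtain $q_{kk}=q_{jj}q_{ii}^{1-a_{ij}}$. Your exponent $d_j+d_i(1-a_{ij})-d_k$ lies in $\{1,\dots,5\}$ and never vanishes, while the paper's exponent $2-a_{ij}-a_{ik}$ can be $0$ (when $k=i$ and $a_{ij}=0$), so your variant actually avoids a degenerate case the paper glosses over.

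The gap is in (i)--(ii). Your chain $q_{kk}=\chi_\alpha(g_k)^N=\prod_l q_{kl}^{Nc_l}=1$ uses only the character identity $\chi_\alpha^N=\chi_k$ and needs $q_{kl}^N=1$ for $k\neq l$. That identity is asserted in passing in \S2.5, but it does not follow from the Cartan condition, which only controls the products $(q_{kl}q_{lk})^N=q_{kk}^{Na_{kl}}=1$; the individual off-diagonal entries $q_{kl}=\chi_l(g_k)$ are unconstrained by the datum, and the paper's own arguments never rely on it (Remark \ref{obs-repe-veces} explicitly treats the case $q_{k,ji}^N\neq 1$). The repair is to use the group identity $g_\alpha^N=g_k$ as well: the character identity gives $\chi_\alpha^N(g_k)=q_{kk}$, the group identity gives $\chi_k(g_\alpha^N)=q_{kk}$, and their product equals
\[
\prod_l\big(q_{kl}q_{lk}\big)^{Nc_l}=\prod_l q_{kk}^{Na_{kl}c_l}=1,
\]
so that $q_{kk}^2=1$, which contradicts $\ord q_{kk}=N$ being odd and $>1$. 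This is exactly what the paper does, and it is also where the oddness of $N$ enters in (i)--(ii) --- your version never invokes it there, which is a symptom of the missing step.
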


\begin{proof}
(i) Suppose that $\mathcal{P}^{\chi^N_i}_{g^N_i}(A)\neq 0$. By Lemma \ref{lem-g-primitivos}, 
there exist $j\in \I_3$	such that $\chi^N_i=\chi_j$ and $g^N_i=g_j$. 
Therefore,
$1=q_{jj}^{a_{ji}N}=q_{ji}^Nq_{ij}^N=\chi_{i}^N(g_j)\chi_{j}(g_i^N)=\chi_{j}(g_j)\chi_{j}(g_j)=q_{jj}^2$.
This contradicts the fact that $q_{jj}$ has order $N$, which is odd.
	
(ii) Let $\alpha=\alpha_{ji}\in \Phi^{+}\setminus\Delta$	
and suppose that $\chi_{ji}^N\neq\varepsilon$ and $\mathcal{P}^{\chi_{ji}^N}_{g_{ji}^N}(A)\neq0$.
 Then, again by Lemma \ref{lem-g-primitivos}, there exists 
 $k\in \I_3$ such that $\chi_{ji}^N=\chi_k$ and $g_{ji}^N=g_k$. 
 Therefore,
$q_{k,ji}^N=\chi_{ji}^N(g_k)=\chi_k(g_k)=q_{kk}$, $q_{ji,k}^N=\chi_k(g_{ji}^N)=\chi_k(g_k)=q_{kk}$
and we have that $q_{k,ji}^Nq_{ji,k}^N=q_{kk}^{2}$.	
Since  
\[
q_{k,ji}^Nq_{ji,k}^N=\prod_{i\le l \le j} q_{kl}^N q_{lk}^N=\prod_{i\le l \le j} q_{kk}^{a_{kl}N}=1,
\]
it follows that $q_{kk}^{2}=1$, which is a contradiction.
	
(iii) If $\chi_j\chi_{i}^{1-a_{ij}}=\epsilon$ then $q_{ij}q_{ii}^{1-a_{ij}}=1$ and $q_{jj}q_{ji}^{1-a_{ij}}=1$. Thus,
	\[1= q_{jj}  q_{ji}^{1-a_{ij}}  q_{ij}^{1-a_{ij}}  q_{ii}^{(1-a_{ij})^2} =q_{jj} q_{ii}^{a_{ij}(1-a_{ij})}  q_{ii}^{(1-a_{ij})^2} =  q_{jj}q_{ii}^{(1-a_{ij})}.   \]
	Using that $q_{ll}=q_{33}^{2-\delta_{l,3}}$, for all $l\in \I_{3}$,  we have $1=q_{33}^{(2-\delta_{j,3})+(2-\delta_{i,3})(1-a_{ij}) }$.
	Since $1\le (2-\delta_{i,3})(1-a_{ij}) + (2-\delta_{j,3})\le 6$ and $N$ is odd, it follows that $N\in\{3,5\}$.
	
(iv)  Suppose that $N\neq 3,5$ and $\mathcal{P}^{\chi_j\chi_{i}^{1-a_{ij}}}_{g_{j}g_{i}^{1-a_{ij}}}(A)\neq 0$. 
	From (iii), $\chi_j\chi_{i}^{1-a_{ij}}\neq\epsilon$, and consequently   
	Lemma \ref{lem-g-primitivos} implies that there exist $k\in \I_3$ such that 
	$\chi_j\chi_{i}^{1-a_{ij}}=\chi_k$, and $g_{j}g_{i}^{1-a_{ij}}=g_k$.
	Hence, $q_{ii}^{1-a_{ij}}q_{ij}=q_{ik}$, $q_{ii}^{1-a_{ij}}q_{ji}=q_{ki}$ and we obtain
	\[q_{ii}^{2-a_{ij}} =q_{ii}^{2(1-a_{ij})}q_{ii}^{a_{ij}}   =q_{ii}^{2(1-a_{ij})}q_{ij}q_{ji}=q_{ik}q_{ki}=q_{ii}^{a_{ik}}.\]
	Thus, $q_{ii}^{2-a_{ij}-a_{ik}}=1$. Since $0\le 2-a_{ij}-a_{ik}\le 6$ and $N\ge 7$ we have a contradiction.
\end{proof}

\begin{obs}\label{obs-repe-veces}
In what follows we use repeatedly the following argument:
For $j,i\in \tilde{\I}$ let $y_{ji}  \in A$ be 
such that $y_{ji}^N=0$ if $g_{ji}^N=1$ or $\chi_{ji}^N\neq\varepsilon$.  
Then 
\[
q_{k,ji}^N\,  y_{ji}^N=y_{ji}^N,\quad \text{for all }k\in \tilde{\I}.
\]
Indeed, since $\chi_{ji}^N(g_k)=q_{k,ji}^N$, 
$\chi_{k}(g_{ji}^{N}) =q_{ji,k}^N$
and $q_{k,ji}^{N}q_{ji,k}^{N} = q_{kk}^{N} = 1$, the equality above 
clearly holds if $g_{ji}^N=1$ or $q_{k,ji}^N=1$. Assume otherwise that $q_{k,ji}^N\neq 1$, then $\chi_{ji}^N(g_k)=q_{k,ij}^N\neq 1$ and whence $\chi_{ji}^N\neq\varepsilon$.
This implies that $y_{ji}^N=0$ and the equality also holds.

Taking $k=j=i+1$ and using that 
$q_{(i+1)i}^N=q_{i+1,(i+1)i}^N$, we also have that 
\[
q_{(i+1)i}^N y_{(i+1)i}^N=y_{(i+1)i}^N.
\]
Similarly, for each $y_{i} \in A$ such that $y_{i}^N=0$ if $g_{i}^N=1$ or $\chi_{i}^N\neq\varepsilon$ we have 
\[q_{k,i}^N y_{i}^N=y_{i}^N,\quad \text{for all }k\in \tilde{\I}.\]
\end{obs}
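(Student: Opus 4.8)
The whole statement rests on a single observation tying the extended scalars to the characters, so the plan is to isolate that identity and then argue the main equality by a two-case split, deducing the two consequences by specialization.

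First I would record the identity
\[
\chi_{ji}^{N}(g_k)=q_{k,ji}^{N},\qquad k\in\tilde{\I},
\]
which is immediate from \eqref{notation-chi}: since $\chi_{ji}=\prod_{i\le l\le j}\chi_l$, one has $\chi_{ji}(g_k)=\prod_{i\le l\le j}\chi_l(g_k)=\prod_{i\le l\le j}q_{kl}=q_{k,ji}$, and raising to the $N$-th power gives the claim (this works verbatim for the tilded roots, where a repeated index in the range $i\le l\le j$, read in the total order $1<2<3<\tilde{3}<\tilde{2}$, accounts for the multiplicity $2$ of the short simple root). With this in hand, fix $k\in\tilde{\I}$ and split on the value of $q_{k,ji}^{N}$. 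If $q_{k,ji}^{N}=1$, the asserted equality $q_{k,ji}^{N}y_{ji}^{N}=y_{ji}^{N}$ is trivial. If instead $q_{k,ji}^{N}\neq 1$, then $\chi_{ji}^{N}(g_k)=q_{k,ji}^{N}\neq 1$, so $\chi_{ji}^{N}\neq\varepsilon$ (as $\varepsilon$ is constant equal to $1$ on $\Gamma$); by the standing hypothesis on $y_{ji}$ this forces $y_{ji}^{N}=0$, and both sides vanish. This disposes of the general statement. The subcase $g_{ji}^{N}=1$ is subsumed, since then $y_{ji}^{N}=0$ directly by hypothesis; alternatively one may route through $q_{ji,k}^{N}=\chi_k(g_{ji}^{N})=1$ together with the product identity $q_{k,ji}^{N}q_{ji,k}^{N}=1$ coming from the Cartan condition \eqref{eq:Cartan-condition}, exactly as in the proof of Lemma \ref{lem-for-lifting}(ii).

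For the two consequences I would specialize. Taking $k=j=i+1$, I expand $q_{i+1,(i+1)i}=q_{i+1,i}\,q_{i+1,i+1}=q_{(i+1)i}\,q_{(i+1)(i+1)}$; since $q_{(i+1)(i+1)}$ is a power of $q_{33}$ and hence satisfies $q_{(i+1)(i+1)}^{N}=1$ (here $N$ odd is used, together with $q_{11}=q_{22}=q_{33}^{2}$), raising to the $N$-th power yields $q_{i+1,(i+1)i}^{N}=q_{(i+1)i}^{N}$, so the general statement at $k=i+1$ reads $q_{(i+1)i}^{N}y_{(i+1)i}^{N}=y_{(i+1)i}^{N}$. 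The single-root version for $y_i$ is literally the same two-case argument with degenerate ranges: now $\chi_i^{N}(g_k)=q_{ki}^{N}=q_{k,i}^{N}$, and the dichotomy on whether $q_{k,i}^{N}=1$, using the hypothesis that $y_i^{N}=0$ whenever $\chi_i^{N}\neq\varepsilon$, closes it.

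I do not expect any genuine obstacle here: the entire content is the bookkeeping of the multi-index convention $q_{ba,dc}$ and of the extension of $q_{kl},a_{kl}$ to the tilded indices of $\tilde{\I}$, plus the two elementary facts $\chi_{ji}^{N}(g_k)=q_{k,ji}^{N}$ and $q_{kk}^{N}=1$. The only point that requires care is reading the ranges $i\le l\le j$ in the total order $1<2<3<\tilde{3}<\tilde{2}$, so that the multiplicities attached to the short roots are counted correctly; once that convention is fixed, the proof reduces to the two-line case distinction above.
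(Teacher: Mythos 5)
Your argument is correct and is essentially the paper's own: the identity $\chi_{ji}^{N}(g_k)=q_{k,ji}^{N}$ plus the dichotomy on whether $q_{k,ji}^{N}=1$, with the hypothesis on $y_{ji}^{N}$ killing the nontrivial case, and the two consequences obtained by specialization using $q_{kk}^{N}=1$. Your observation that the case $g_{ji}^{N}=1$ is subsumed (rather than needing the separate route through $q_{ji,k}^{N}$ and the product identity, as the paper writes it) is a minor simplification, not a different proof.
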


\subsection{Deformation of $N$-th powers of root vectors of degree one}
\label{subsection-grade1}

The next result gives us the possible deformations of the relations given by powers of
primitive elements in $\toba(V)$ of degree one, which
are $x_{1}, x_{2}, x_{3}$.

As in the beginning of \S \ref{subsec:primitive-elem},
we fix $y_{i} \in A_{1}$ 
such that $\bar{y_{i}} = x_{i}$ in 
$B_{1}$ and $y_{i}\in P_{g_{i}}^{\chi_{i}}$. 
In particular, $\bar{y_{i}}^{N} = x_{i}^{N}=0$ for all $1\leq i\leq 3$.

\begin{lema} \label{lem:def-degree-1} For each $1\leq i\leq 3$,
there exists $\mu_{i}\in \Bbbk$ such that 
\begin{align}\label{def-rel-grade-1}
	\lpbw{i}^N=\mu_i(g_i^N-1),
\end{align}	
and $\mu_i=0$ if $g_i^N=1$ or $\chi^N_i\neq \varepsilon$, and $\mu_i=1$ if $\chi^N_i=\varepsilon$.
\end{lema}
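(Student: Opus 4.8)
The plan is to identify $\lpbw{i}^N$ as a skew-primitive element of a very restricted type and then read off the conclusion from Lemmas \ref{lem-g-primitivos} and \ref{lem-for-lifting}.

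First I would compute the coproduct of $\lpbw{i}^N$. Since $\com$ is an algebra map and $\com(\lpbw{i}) = \lpbw{i}\ot 1 + g_i\ot \lpbw{i}$, write $a = \lpbw{i}\ot 1$ and $b = g_i\ot \lpbw{i}$ in $A\ot A$. From the relation $g_i\lpbw{i} = q_{ii}\lpbw{i}g_i$ (the case $h=g_i$ of $h\lpbw{i}=\chi_i(h)\lpbw{i}h$) one checks directly that $ba = q_{ii}\,ab$. The $q$-binomial theorem then gives
\[
\com(\lpbw{i})^N = (a+b)^N = \sum_{k=0}^{N}\binom{N}{k}_{q_{ii}} a^{N-k}b^{k}.
\]
Because $N=\ord q_{ii}$ and $q_{ii}$ is a primitive $N$-th root of unity, $(N)_{q_{ii}}=0$, so $\binom{N}{k}_{q_{ii}}=0$ for $0<k<N$ and only the terms $k=0,N$ survive. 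Using $a^N=\lpbw{i}^N\ot 1$ and $b^N=g_i^N\ot \lpbw{i}^N$, this yields $\com(\lpbw{i}^N)=\lpbw{i}^N\ot 1+g_i^N\ot \lpbw{i}^N$. Combined with $h\lpbw{i}^N h^{-1}=\chi_i^N(h)\lpbw{i}^N$, this shows $\lpbw{i}^N\in\mathcal{P}^{\chi_i^N}_{g_i^N}(A)$.

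With this in hand the proof splits according to $\chi_i^N$. If $\chi_i^N\neq\varepsilon$, Lemma \ref{lem-for-lifting}(i) gives $\mathcal{P}^{\chi_i^N}_{g_i^N}(A)=0$, hence $\lpbw{i}^N=0$ and we take $\mu_i=0$. If $\chi_i^N=\varepsilon$, Lemma \ref{lem-g-primitivos}(ii) gives $\mathcal{P}^{\varepsilon}_{g_i^N}(A)=\Bbbk(1-g_i^N)$, so $\lpbw{i}^N=\mu_i(g_i^N-1)$ for some $\mu_i\in\Bbbk$. When $g_i^N=1$ the right-hand side vanishes and $\mu_i=0$; when $g_i^N\neq 1$ the element $g_i^N-1$ is nonzero, and if $\mu_i\neq 0$ one rescales $\lpbw{i}$ by an $N$-th root of $\mu_i^{-1}$ to normalize $\mu_i=1$. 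This rescaling only replaces the basis vector $x_i$ by a scalar multiple and leaves $g_i,\chi_i$ (hence the whole braiding matrix) unchanged, so it is harmless.

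The main obstacle is the coproduct computation: one has to set up the $q$-commutation of $a$ and $b$ in $A\ot A$ correctly and invoke the vanishing of the intermediate $q$-binomial coefficients, which is exactly where the hypothesis that $q_{ii}$ is a primitive $N$-th root of unity enters. Once $\lpbw{i}^N$ is located in $\mathcal{P}^{\chi_i^N}_{g_i^N}(A)$, the case analysis is immediate from the two cited lemmas, the only remaining point being the standard rescaling normalization that produces $\mu_i\in\{0,1\}$, with the value $1$ recorded in the nondegenerate case $\chi_i^N=\varepsilon$, $g_i^N\neq 1$.
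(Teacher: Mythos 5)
Your proposal is correct and follows essentially the same route as the paper: compute $\Delta(\lpbw{i}^N)$ via the $q$-binomial theorem using $q_{ii}$ a primitive $N$-th root of unity, conclude $\lpbw{i}^N\in\mathcal{P}^{\chi_i^N}_{g_i^N}(A)$, then apply Lemma \ref{lem-for-lifting}(i) and Lemma \ref{lem-g-primitivos}(ii) and normalize by rescaling $y_i$. Your explicit caveat that the rescaling to $\mu_i=1$ is only possible when $\mu_i\neq 0$ is in fact slightly more careful than the paper's phrasing.
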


\begin{proof}
Since $\lpbw{i}\in \mathcal{P}^{\chi_i}_{g_i}(A)$ for all $i\in \I_3$, 
by a straightforward computation one gets 
\[
\Delta(\lpbw{i}^k)=\sum_{j=0}^{k}\binom{k}{j}_{q_{ii}}\lpbw{i}^{k-j}g^j_i\otimes \lpbw{i}^j, \qquad k\in \mathbb{N}.
\]
Using that $q_{ii}$ is a primitive $N$-th root of unity, it follows that
$\Delta(\lpbw{i}^N)=\lpbw{i}^N\otimes 1+g^N_i\otimes \lpbw{i}^N$, which
implies that 
$\lpbw{i}^N\in \mathcal{P}^{\chi^N_i}_{g_i^N}(A)$. 
If $\chi^N_i\neq \varepsilon$ then $\lpbw{i}^N=0$,
by Lemma \ref{lem-for-lifting} (i).
On the other hand, if  $\chi^N_i= \varepsilon$ then 
$\lpbw{i}^N\in 	\Bbbk (g^N_i-1)$. Hence, there exists 
$\mu_i\in \Bbbk$ such that $\lpbw{i}^N=\mu_i(g^N_i-1)$. 
By a change on variable on $y_i$, if necessary, we have that
$\lpbw{i}^N=\mu_i(g_i^N-1)$, where $\mu_i=0$ if $g_i^N=1$ or $\chi^n_i\neq \varepsilon$, and $\mu_i=1$ if $\chi^n_i=\varepsilon$.
\end{proof}

\subsection{Deformation of the $N$-th power of root vectors of degree 2}\label{subsection-grade2}
In this subsection 
we compute the possible deformations of the relations given by powers of
root vectors of degree two. These elements 
are given by $\pbw{21}=\pbw{2}\pbw{1}-q_{21}\pbw{1}\pbw{2}$ and 
$\pbw{32}=\pbw{3}\pbw{2}-q_{32}\pbw{2}\pbw{3}$. 

For $i=1,2$, set 
$\lpbw{(i+1)i}=\lpbw{i+1}\lpbw{i}-q_{(i+1)i}\lpbw{i}\lpbw{i+1}\in A$. 

\begin{lema} \label{lem:def-degree-2} 
For each $1\leq i\leq 2$, there exist scalars $\mu_{(i+1)i}\in \Bbbk$ 
such that
\begin{align}\label{def-rel-grade-2}
		\lpbw{(i+1)i}^N=\mu_{(i+1)i}\big(g_{(i+1)i}^N-1\big)-\xi_2^N\mu_{i+1}\lpbw{i}^N,
\end{align}
and
$\mu_{(i+1)i}=0$ if $g_{(i+1)i}^N=1$ or $\chi_{(i+1)i}^N\neq\varepsilon$.
\end{lema}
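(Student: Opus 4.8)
The plan is to determine $\lpbw{(i+1)i}^N$ by computing its comultiplication, showing that after subtracting a suitable multiple of $\lpbw{i}^N$ it becomes a skew-primitive element, and then identifying the corresponding skew-primitive space by means of Lemmas \ref{lem-for-lifting} and \ref{lem-g-primitivos}.

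First I would compute $\Delta(\lpbw{(i+1)i})$. Since $\lpbw{i}\in\mathcal{P}^{\chi_i}_{g_i}(A)$ and $\lpbw{i+1}\in\mathcal{P}^{\chi_{i+1}}_{g_{i+1}}(A)$, expanding $\Delta(\lpbw{i+1}\lpbw{i}-q_{(i+1)i}\lpbw{i}\lpbw{i+1})$ with the rule $g\lpbw{j}=\chi_j(g)\lpbw{j}g$ and cancelling yields a single surviving tail term,
\begin{equation*}
\Delta(\lpbw{(i+1)i})=\lpbw{(i+1)i}\otimes 1+g_{(i+1)i}\otimes \lpbw{(i+1)i}+\lambda\, g_i\lpbw{i+1}\otimes \lpbw{i},
\end{equation*}
where $\lambda=q_{i,i+1}^{-1}(1-q_{ii}^{a_{i,i+1}})=q_{i,i+1}^{-1}\xi_2$ (so $\lambda^N=\xi_2^N$, using $q_{i,i+1}^N=1$); thus $\lpbw{(i+1)i}$ fails to be skew-primitive only by the term $\lambda\, g_i\lpbw{i+1}\otimes\lpbw{i}$.

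The main step is to raise this to the $N$-th power in $A\otimes A$. Writing $\Delta(\lpbw{(i+1)i})=X+Z+Y$ with $X=\lpbw{(i+1)i}\otimes 1$, $Z=g_{(i+1)i}\otimes \lpbw{(i+1)i}$ and $Y=\lambda\, g_i\lpbw{i+1}\otimes \lpbw{i}$, I would record the pairwise relations of these three elements. One has $XZ=\hat q^{-1}ZX$ with $\hat q=\chi_{(i+1)i}(g_{(i+1)i})$, which equals $q_{22}$ for $i=1$ and $q_{33}$ for $i=2$ and is therefore a primitive $N$-th root of unity; likewise $Z$ and $Y$ commute up to a root-of-unity scalar because $[\lpbw{(i+1)i},\lpbw{i}]_c=0$. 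The delicate relation is the one between $X$ and $Y$, governed by $[\lpbw{i+1},\lpbw{(i+1)i}]_c$: for $i=1$ the Serre relation $\ad(\lpbw{2})^{2}(\lpbw{1})=0$ makes it vanish and the three terms $q$-commute cleanly, whereas for $i=2$ one has the genuinely nonzero commutator $[\lpbw{3},\lpbw{32}]_c=\lpbw{\td{3}2}$, which I would feed into Lemma \ref{lem-rec-grado3} as the element $\mathtt{y}$. In either case, since $N$ is odd and $\hat q$ is a primitive $N$-th root, Corollary \ref{cor-rec-grado3} forces the terms carrying the commutator to drop out and the expansion collapses to
\begin{equation*}
\Delta(\lpbw{(i+1)i}^N)=X^N+Z^N+Y^N=\lpbw{(i+1)i}^N\otimes 1+g_{(i+1)i}^N\otimes \lpbw{(i+1)i}^N+\xi_2^N\,g_i^N\lpbw{i+1}^N\otimes \lpbw{i}^N,
\end{equation*}
where $Y^N=\lambda^N(g_i\lpbw{i+1})^N\otimes\lpbw{i}^N=\xi_2^N\,g_i^N\lpbw{i+1}^N\otimes\lpbw{i}^N$ because $(g_i\lpbw{i+1})^N=g_i^N\lpbw{i+1}^N$. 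I expect this step to be the main obstacle: verifying the commutation relations in $A$, tracking the lower-order corrections, and checking that every braiding factor involved is a primitive $N$-th root, so that the odd-$N$ quantum binomial applies.

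Finally I would substitute $\lpbw{i+1}^N=\mu_{i+1}(g_{i+1}^N-1)$ and $\lpbw{i}^N=\mu_i(g_i^N-1)$ from Lemma \ref{lem:def-degree-1}, so that $g_i^N\lpbw{i+1}^N=\mu_{i+1}(g_{(i+1)i}^N-g_i^N)$, and set $w:=\lpbw{(i+1)i}^N+\xi_2^N\mu_{i+1}\lpbw{i}^N$. Using the formula above together with $\Delta(\lpbw{i}^N)=\lpbw{i}^N\otimes 1+g_i^N\otimes\lpbw{i}^N$, a direct check shows that the tail cancels and $\Delta(w)=w\otimes 1+g_{(i+1)i}^N\otimes w$; moreover $w$ is $\chi_{(i+1)i}^N$-homogeneous, because $\mu_{i+1}\neq 0$ forces $\chi_{i+1}^N=\varepsilon$ and hence $\chi_{(i+1)i}^N=\chi_i^N$. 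Thus $w\in\mathcal{P}^{\chi_{(i+1)i}^N}_{g_{(i+1)i}^N}(A)$, and as $\alpha_{(i+1)i}\in\Phi^{+}\setminus\Delta$, Lemma \ref{lem-for-lifting}(ii) leaves two cases. If $\chi_{(i+1)i}^N\neq\varepsilon$ then $w=0$ and I take $\mu_{(i+1)i}=0$; if $\chi_{(i+1)i}^N=\varepsilon$ then $w\in\mathcal{P}^{\varepsilon}_{g_{(i+1)i}^N}(A)=\Bbbk(1-g_{(i+1)i}^N)$ by Lemma \ref{lem-g-primitivos}(ii), so $w=\mu_{(i+1)i}(g_{(i+1)i}^N-1)$ for some $\mu_{(i+1)i}\in\Bbbk$. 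In both cases $\lpbw{(i+1)i}^N=\mu_{(i+1)i}(g_{(i+1)i}^N-1)-\xi_2^N\mu_{i+1}\lpbw{i}^N$, and $\mu_{(i+1)i}=0$ whenever $g_{(i+1)i}^N=1$ (as then $\Bbbk(1-g_{(i+1)i}^N)=0$) or $\chi_{(i+1)i}^N\neq\varepsilon$, which is exactly the claim.
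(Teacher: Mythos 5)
Your proposal is correct and follows essentially the same route as the paper's proof: the same three-term decomposition of $\Delta(\lpbw{(i+1)i})$, the same observation that for $i=1$ the terms $q$-commute cleanly while for $i=2$ the correction term involving $\lpbw{\td{3}2}$ must be absorbed via Lemma \ref{lem-rec-grado3} and Corollary \ref{cor-rec-grado3}, and the same final identification of $\lpbw{(i+1)i}^N+\xi_2^N\mu_{i+1}\lpbw{i}^N$ as a skew-primitive element handled by Lemmas \ref{lem-for-lifting} and \ref{lem-g-primitivos}. The only cosmetic difference is that you dispose of the scalar $q_{i(i+1)}^{N(N-1)/2}$ by invoking $q_{ij}^N=1$ directly, whereas the paper absorbs it through Remark \ref{obs-repe-veces}; both are consistent with the paper's setup.
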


\begin{proof}
A direct computation shows that
\[
\Delta(\lpbw{(i+1)i})=
\lpbw{(i+1)i} \otimes 1 +  \xi_2 \lpbw{i+1}\,  g_{i} \otimes \lpbw{i} + 
g_{(i+1)i}\otimes \lpbw{(i+1)i}.
\]
Take $i=1$ and write  
$\Delta(\lpbw{21})=\mt{a}_1+\mt{a}_2+\mt{a}_3$ with 
$\mt{a}_1=\lpbw{21} \otimes 1$, $\mt{a}_2=\xi_2 \lpbw{2}\,  g_{1} \otimes \lpbw{1}$ and $\mt{a}_3=g_{21}\otimes \lpbw{21}$. 
Since $\mt{a}_k\mt{a}_j=q^{-2}_{33}\mt{a}_j\mt{a}_k$ 
for all $1\leq j<k\leq 3$, it follows that
$$
\Delta(\lpbw{21}^N)=\mt{a}^N_1+\mt{a}^N_2+\mt{a}^N_3
=\lpbw{21}^N\otimes 1+q^{\frac{N(N-1)}{2}}_{12}\xi_2^N\lpbw{2}^Ng_1^N\otimes \lpbw{1}^N
+g_{21}^N\otimes \lpbw{21}^N.
$$
Similarly, for $i=2$ we write
$\Delta(\lpbw{32})=\mt{b}_1+\mt{b}_2+\mt{b}_3$ with 
$\mt{b}_1=\lpbw{32} \otimes 1$, 
$\mt{b}_2=\xi_2 \lpbw{3}\,  g_{2} \otimes \lpbw{2}$ and 
$\mt{b}_3=g_{32}\otimes \lpbw{32}$. 
In this case,
\begin{align*}
	\mt{b}_1\mt{b}_2&=q^{-1}_{33}\mt{b}_2\mt{b}_1+\mt{b}_4,& \mt{b}_j\mt{b}_3&=q^{-1}_{33}\mt{b}_3\mt{b}_j,\,\,\, j=1,2, \\
	\mt{b}_1\mt{b}_4&=q^{-2}_{33}\mt{b}_4\mt{b}_1, & \mt{b}_4\mt{b}_2&=q^{-2}_{33}\mt{b}_2\mt{b}_4,&   
\end{align*}
where $\mt{b}_4=-\xi_2q^{-1}_{32}q^{-1}_{33}\lpbw{\td{3}2}g_2\otimes \lpbw{2}$. Hence
$$	
\Delta(\lpbw{32}^N)=\mt{b}^N_1+\mt{b}^N_2+\mt{b}^N_3
=\lpbw{32}^N\otimes 1+q^{\frac{N(N-1)}{2}}_{23}\xi_2^N\lpbw{3}^Ng_2^N\otimes \lpbw{2}^N+
g_{32}^N\otimes \lpbw{32}^N.
$$
If $q^N_{i(i+1)}\neq 1$ then $\chi^N_{i+1}\neq \varepsilon$ and  
by Remark \ref{obs-repe-veces} we get 
$q^{\frac{N(N-1)}{2}}_{i(i+1)}y_{i+1}^{N}=y_{i+1}^{N}$
and 
\begin{align*}
	\Delta(\lpbw{(i+1)i}^N)&=\lpbw{(i+1)i}^N\otimes 1+\xi_2^N\lpbw{i+1}^Ng_i^N\otimes \lpbw{i}^N+g_{(i+1)i}^N\otimes \lpbw{(i+1)i}^N\\[.2em]
	                       &\overset{\mathclap{\eqref{def-rel-grade-1}}}{=}\lpbw{(i+1)i}^N\otimes 1+\xi_2^N\mu_{i+1}(g_{i+1}^N-1)g_i^N\otimes \lpbw{i}^N+g_{(i+1)i}^N\otimes \lpbw{(i+1)i}^N\\[.3em]
	                       &=\lpbw{(i+1)i}^N\otimes 1+\xi_2^N\mu_{i+1}(g_{i+1}^N-1)g_i^N\otimes \lpbw{i}^N+g_{(i+1)i}^N\otimes \lpbw{(i+1)i}^N.
\end{align*}
Therefore, 
$\nu_{(i+1)i}:=\lpbw{(i+1)i}^N+\xi_2^N\mu_{i+1}\lpbw{i}^N\in \mathcal{P}_{g^N_{(i+1)i}}^{\chi_{(i+1)i}^N}$. Now,
by Lemma \ref{lem-for-lifting} (ii), there exists  $\mu_{(i+1)i}\in \Bbbk$ such that $\nu_{(i+1)i}=\mu_{(i+1)i}(g_{(i+1)i}^N-1)$. Consequently, we have that 
\[\lpbw{(i+1)i}^N=\mu_{(i+1)i}\big(g_{(i+1)i}^N-1\big)-\xi_2^N\mu_{i+1}\lpbw{i}^N,\]	
where $\mu_{(i+1)i}=0$ if $g_{(i+1)i}^N=1$ or \nolinebreak $\chi_{(i+1)i}^N\neq\varepsilon$. 
\end{proof}

\subsection{Deformation of the $N$-th power of root vectors of degree 3}\label{grade-3}
Below we determine the possible 
deformations of the relations given by powers of
root vectors of degree three, which are  
$\pbw{31} =  \pbw{3}\pbw{21}-q_{3,21}\pbw{21}\pbw{3}$
and  $\pbw{\td{3}2}=\pbw{3}\pbw{32}-q_{3,32}\pbw{32}\pbw{3}$.

Define the following elements of $A$:
\begin{equation}\label{def-elements}
\begin{aligned}
\lpbw{31}&:=\lpbw{3}\lpbw{21}-q_{3,21}\lpbw{21}\lpbw{3},& 
\lpbw{\td{3}2}&=\lpbw{3}\lpbw{32}-q_{3,32}\lpbw{32}\lpbw{3},\\
\lpbw{\td{3}1}&:=\lpbw{3}\lpbw{31}-q_{3,31}\lpbw{31}\lpbw{3},&
\lpbw{\td{2}1}&=\lpbw{2}\lpbw{\td{3}1}-q_{2,\td{3}1}\lpbw{\td{3}1}\pbw{2}.
\end{aligned}
\end{equation}

\begin{lema} \label{lem:def-degree-3} 
There exist scalars $\mu_{31}$, $\mu_{\td{3}2} \in \Bbbk$ such 
that the following identities hold in $A$:
	\begin{align}
\label{def-rel-grade-3=y321} &\lpbw{31}^N=\mu_{31}(g_{31}^N-1)-
\xi_2^N\mu_{3}\lpbw{21}^N-\xi_2^N\mu_{32}\lpbw{1}^N, \\[.2em]	
\label{def-rel-grade-3=y332} &\lpbw{\td{3}2}^N=\mu_{\td{3}2}(g_{\td{3}2}^N-1)-
2\xi_1^N\mu_{3}\lpbw{32}^N-\xi_1^N\xi_2^N\mu_{3}^2\lpbw{2}^N,	
\end{align}
where $\mu_{31}=0$ if $g_{31}^N=1$ or 
$\chi_{31}^N\neq\varepsilon$ and $\mu_{\td{3}2}=0$ if 
$g_{\td{3}2}^N=1$ or $\chi_{\td{3}2}^N\neq \varepsilon$.
\end{lema}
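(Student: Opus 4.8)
The plan is to follow, at the next level of complexity, the template of Lemmas \ref{lem:def-degree-1} and \ref{lem:def-degree-2}. For each of the two degree-three root vectors $\lpbw{31}$ and $\lpbw{\td{3}2}$ from \eqref{def-elements}, I would first compute its coproduct in $A\ot A$, then raise that coproduct to the $N$-th power using the elementary identities of \S2.2, absorb the scalar factors $q^{N(N-1)/2}$ via Remark \ref{obs-repe-veces} and $q_{ij}^N=1$, substitute the already established relations \eqref{def-rel-grade-1} and \eqref{def-rel-grade-2} to rewrite the pure left-leg factors, and finally recognize a suitable correction of $\lpbw{31}^N$ (resp.\ $\lpbw{\td{3}2}^N$) as a skew-primitive element whose type is governed by Lemma \ref{lem-for-lifting}(ii).

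Concretely, for $\lpbw{31}$ I would expand $\Delta(\lpbw{31})=\Delta(\lpbw{3})\Delta(\lpbw{21})-q_{3,21}\Delta(\lpbw{21})\Delta(\lpbw{3})$ using $\Delta(\lpbw{3})=\lpbw{3}\ot 1+g_3\ot\lpbw{3}$ and the degree-two coproduct from the proof of Lemma \ref{lem:def-degree-2}. Commuting the group-likes past the $y$'s (and using $\lpbw{3}\lpbw{1}=q_{31}\lpbw{1}\lpbw{3}$, valid since $N>5$ by Lemma \ref{lem-for-lifting}(iv)), the two extremal summands assemble into $\lpbw{31}\ot 1$ and $g_{31}\ot\lpbw{31}$, while the remaining summands carry a factor $\xi_2$ and second tensor legs $\lpbw{21}$ and $\lpbw{1}$, plus a genuinely mixed correction term. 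After verifying the $q$-commutation relations among these summands so that Proposition \ref{prop-useful} applies and yields $\Delta(\lpbw{31}^N)$ as the sum of the $N$-th powers of the leading summands, substituting \eqref{def-rel-grade-1} and \eqref{def-rel-grade-2} for $\lpbw{3}^N$ and $\lpbw{32}^N$ shows that $\nu_{31}:=\lpbw{31}^N+\xi_2^N\mu_3\lpbw{21}^N+\xi_2^N\mu_{32}\lpbw{1}^N$ lies in $\mathcal{P}^{\chi_{31}^N}_{g_{31}^N}(A)$. By Lemma \ref{lem-for-lifting}(ii) either $\chi_{31}^N\neq\varepsilon$, forcing $\nu_{31}=0$, or $\chi_{31}^N=\varepsilon$, in which case $\nu_{31}\in\Bbbk(g_{31}^N-1)$ by Lemma \ref{lem-g-primitivos}(ii); in both cases one obtains \eqref{def-rel-grade-3=y321} with $\mu_{31}=0$ when $g_{31}^N=1$ or $\chi_{31}^N\neq\varepsilon$.

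The case $\lpbw{\td{3}2}$ runs the same way but is more delicate, and I expect it to be the main obstacle. Since $\alpha_{\td{3}2}=2\alpha_3+\alpha_2$, its coproduct contains a term proportional to $g_2\lpbw{3}^2\ot\lpbw{2}$, so raising to the $N$-th power produces $\lpbw{3}^{2N}=(\lpbw{3}^N)^2=\mu_3^2(g_3^N-1)^2$, which accounts for the quadratic coefficient $\mu_3^2$ in \eqref{def-rel-grade-3=y332}. Moreover the relevant summands do not simply $q$-commute: the middle summand must be split according to $\beta_1+\beta_2=1$ from \eqref{notation-beta}, which puts the computation in the form of Proposition \ref{prop-useful} with $q=q_{33}^{-2}$, $\gamma_1=\beta_1$, $\gamma_2=\beta_2$, and $\gamma=\beta=\beta_1\beta_2\xi_2$. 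The factor $2$ in front of $\xi_1^N\mu_3\lpbw{32}^N$ then emerges from the identity $\xi_2^N(\beta_1^N+\beta_2^N)=2\xi_1^N$, which uses $q_{33}^N=1$ together with $\xi_2=\xi_1(1+q_{33}^{-1})$. Substituting \eqref{def-rel-grade-1} and \eqref{def-rel-grade-2} makes $\nu_{\td{3}2}:=\lpbw{\td{3}2}^N+2\xi_1^N\mu_3\lpbw{32}^N+\xi_1^N\xi_2^N\mu_3^2\lpbw{2}^N$ skew-primitive of type $(\chi_{\td{3}2}^N,g_{\td{3}2}^N)$, and Lemma \ref{lem-for-lifting}(ii) finishes the argument exactly as before.

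The hardest and most error-prone part is the explicit bookkeeping of the two coproducts and the verification that their summands satisfy the hypotheses of Proposition \ref{prop-useful}; in particular, one must check that the correction terms coming from the non-commutativity (the mixed $\lpbw{1}\lpbw{3}$-type term for $\lpbw{31}$ and the $\beta$-split for $\lpbw{\td{3}2}$) really cancel in the $N$-th power, so that only the contributions involving $\lpbw{21}^N,\lpbw{1}^N$ (resp.\ $\lpbw{32}^N,\lpbw{2}^N$) survive, and that the condition $\sum_i w_i=-\beta\, z_{k+1}^2$ of Proposition \ref{prop-useful} holds. Pinning down the scalars $\xi_2^N$, $2\xi_1^N$ and $\xi_1^N\xi_2^N\mu_3^2$ exactly is where the calculation must be carried out with care; once the skew-primitivity of $\nu_{31}$ and $\nu_{\td{3}2}$ is established, the passage to \eqref{def-rel-grade-3=y321} and \eqref{def-rel-grade-3=y332} via Lemmas \ref{lem-for-lifting}(ii) and \ref{lem-g-primitivos}(ii) is routine.
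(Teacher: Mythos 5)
Your proposal follows the paper's proof essentially step for step: decompose $\Delta(\lpbw{31})$ and $\Delta(\lpbw{\td{3}2})$ into elementary tensors, split the $N$-th power of each sum using the elementary identities of Section 2, absorb the scalars via Remark \ref{obs-repe-veces}, substitute \eqref{def-rel-grade-1} and \eqref{def-rel-grade-2}, and conclude with Lemma \ref{lem-for-lifting}(ii). The only small mismatch is that for $\lpbw{31}$ the correction terms are new cross terms rather than multiples of the square of a summand, so the paper applies Corollary \ref{cor-rec-grado3} (twice) there instead of Proposition \ref{prop-useful}, reserving the latter, with the $\beta_1,\beta_2$-split exactly as you describe, for the $\lpbw{\td{3}2}$ case.
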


\begin{proof}
Write
$\Delta(\lpbw{31})=\mt{a}_1+\mt{a}_2+\mt{a}_3+\mt{a}_4$ with
$$
	\mt{a}_1=\lpbw{31} \otimes 1, \qquad	
	\mt{a}_3= \xi_2 \lpbw{3} g_{21} \otimes \lpbw{21}, \qquad
	\mt{a}_2= \xi_2 \lpbw{32} g_1 \otimes \lpbw{1}, \qquad 
	\mt{a}_{4}= g_{31}\otimes \lpbw{31}. 
$$

\noindent {\it Claim 1}: $\Delta(\lpbw{31}^N)=\mt{a}^N_1+
\mt{a}^N_2+\mt{a}^N_3+\mt{a}^N_4$. 

\smallbreak

\noindent Firstly, notice that 
$(\mt{a}_1+\mt{a}_2+\mt{a}_3)\mt{a_4}= q^{-1}_{33}\mt{a_4}(\mt{a}_1+\mt{a}_2+\mt{a}_3)$. 
Since $q^{N}_{33}=1$, we obtain that 
\[
\Delta(\lpbw{31}^N)=(\mt{a}_1+\mt{a}_2+\mt{a}_3)^N+\mt{a}^N_4.
\]
Now, we analyse the term $(\mt{a}_1+\mt{a}_2+\mt{a}_3)^N$. For this, consider 
\[ 
\mt{a}_5=-\xi_2^2  q_{33}^{-1} q_{32,21}^{-1}   \pbw{\td{3}2}g_{21}g_1 \otimes \lpbw{21}\lpbw{1}, 
\qquad \mt{a}_6= \xi_2    q_{33}^{-1} q_{3,21}^{-1}    \lpbw{\td{3}1}g_{21} \otimes\lpbw{21}.
\]
Then, taking $\mt{a}_{12}=\mt{a}_1+\mt{a_2}$ and $\mt{a}_{56}=\mt{a}_5+\mt{a_6}$,
one has that
\[
\mt{a}_{12}\mt{a}_3=q^{-1}_{33}\mt{a}_3\mt{a}_{12}+\mt{a}_{56},\quad \mt{a}_{56}\mt{a}_3
=q_{33}^{-2}\mt{a}_3\mt{a}_{56},\quad \mt{a}_{12}\mt{a}_{56}=q_{33}^{-2}\mt{a}_{56}\mt{a}_{12}.
\]
Applying Corollary \ref{cor-rec-grado3} for $\mt{x}=\mt{a}_{12}$, $\mt{y}=\mt{a}_{56}$
 and $\mt{z}=\mt{a}_3$ we conclude that
\[
(\mt{a}_1+\mt{a}_2+\mt{a}_3)^N=(\mt{a}_{12}+\mt{a}_3)^N=
\mt{a}^N_{12}+\mt{a}^N_3.
\]
Similarly, write $\mt{a}_{78}=\mt{a}_7+\mt{a}_8$ where
\[
\mt{a}_7= -\xi_2^2 (q_{32}q_{33})(q_{21}q_{31})^{-1}  \lpbw{2}\lpbw{\td{3}1}\, 
g_1 \otimes\lpbw{1},\qquad \mt{a}_8=
 \xi_2 (q_{32}q_{33})(q_{21}q_{31})^{-1} \lpbw{\td{2}1} g_1 \otimes \lpbw{1}.
\]
Then
\[
\mt{a}_{1}\mt{a}_2=q^{-1}_{33}\mt{a}_2\mt{a}_{1}+\mt{a}_{78},\quad
 \mt{a}_1\mt{a}_{78}=q_{33}^{-2}\mt{a}_{78}\mt{a}_1,\quad 
 \mt{a}_{78}\mt{a}_{2}=q_{33}^{-2}\mt{a}_{2}\mt{a}_{78}.
\]
Applying again Corollary \ref{cor-rec-grado3} for 
$\mt{x}=\mt{a}_{1}$, $\mt{y}=\mt{a}_{78}$ and $\mt{z}=\mt{a}_2$
it follows that $(\mt{a}_1+\mt{a}_2)^N=\mt{a}^N_1+\mt{a}^N_2$ 
and the claim is proved. 

\smallbreak
Set $c_1=(q_{1,32})^{\frac{N(N-1)}{2}}$ and $c_2=(q_{21,3})^{\frac{N(N-1)}{2}}$.
From Claim 1 and Remark \ref{obs-repe-veces} it follows 
\begin{align*}
	\Delta(\lpbw{31}^N)&=\lpbw{31}^N\otimes 1+c_1\lpbw{32}^N g^N_1 \otimes \lpbw{1}^N+
	c_2\lpbw{3}^N g^N_{21} \otimes \lpbw{21}^N+g^N_{31}\otimes \lpbw{31}^N\\[.2em]
	&\overset{\mathclap{\eqref{def-rel-grade-2}}}{=}\lpbw{31}^N\otimes 1+
	c_1\big(\mu_{32}\big(g_{32}^N-1\big)-\xi_2^N\mu_{3}\lpbw{2}^N\big) g^N_1 \otimes \lpbw{1}^N+
	c_2\lpbw{3}^N g^N_{21} \otimes \lpbw{21}^N+g^N_{31}\otimes \lpbw{31}^N\\[.2em]
	&\overset{\mathclap{\eqref{def-rel-grade-1}}}{=}\lpbw{31}^N\otimes 1+c_1\big(\mu_{32}\big(g_{32}^N-1\big)-\xi_2^N\mu_{3}\mu_2(g_2^N-1)\big) g^N_1 \otimes \lpbw{1}^N+\\[.2em]
	&\quad+c_2\mu_3(g_3^N-1) g^N_{21} \otimes \lpbw{21}^N+g^N_{31}\otimes \lpbw{31}^N\\[.2em]
	&=\lpbw{31}^N\otimes 1+\big(\mu_{32}\big(g_{32}^N-1\big)-
	\xi_2^N\mu_{3}\mu_2(g_2^N-1)\big) g^N_1 \otimes \lpbw{1}^N+\\[.2em]
	&\quad+\mu_3(g_3^N-1) g^N_{21} \otimes \lpbw{21}^N+g^N_{31}\otimes \lpbw{31}^N.
\end{align*}
Thus
$\nu_{31}=\lpbw{31}^N+\xi_2^N\mu_{3}\lpbw{21}^N+\
xi_2^N\mu_{32}\lpbw{1}^N \in \mathcal{P}_{g^N_{31}}^{\chi_{31}^N}$, and
by Lemma \ref{lem-for-lifting} (ii), there exists $\mu_{31}\in \Bbbk$ 
such that $\nu_{31}=\mu_{31}(g_{31}^N-1)$. Then
\[
	\lpbw{31}^N=\mu_{31}(g_{31}^N-1)-\xi_2^N\mu_{3}\lpbw{21}^N-
	\xi_2^N\mu_{32}\lpbw{1}^N,	
\]
where $\mu_{31}=0$ if $g_{31}^N=1$ or $\chi_{31}^N\neq\varepsilon$.
\vspace{.2cm}
Now, we compute $\Delta(\lpbw{\td{3}2}^N)$. 
As before, write $\Delta(\lpbw{\td{3}2})=\mt{b}_1+\mt{b}_2+\mt{b}_3+\mt{b}_4$, 
where
$$
\mt{b}_1=\lpbw{\td{3}2}\otimes 1, \qquad \mt{b}_3=\xi_1 \xi_2 \lpbw{3}^2 g_2\otimes \lpbw{2},  \qquad
\mt{b}_2=q_{3,3}\xi_2\lpbw{3} g_{32}\otimes \lpbw{32}, \qquad 
\mt{b}_{4}=g_{\td{3}2}\otimes \lpbw{\td{3}2}.
$$ 

\smallbreak
\noindent {\it Claim 2: $\Delta(\lpbw{\td{3}2}^N)=\mt{b}_1^N+
\mt{b}_2^N+2(1+q_{33})^{-2}\mt{b}_3^N+\mt{b}_4^N+\mt{b}_5^N$}. 

\smallbreak
\noindent Since $\mt{b}_1(\mt{b}_2+\mt{b}_3+\mt{b}_4)=
q^{-2}_{33}(\mt{b}_2+\mt{b}_3+\mt{b}_4)\mt{b}_1$
and $q^{N}_{33}=1$, it follows that
$\Delta(\lpbw{\td{3}2}^N)=\mt{b}^N_1+(\mt{b}_2+\mt{b}_3+\mt{b}_4)^N$.
Now observe that
\begin{align*}
&\mt{b}_2\mt{b}_3=q_{33}^{-2}\mt{b}_3\mt{b}_2,& 
&\mt{b}_2\mt{b}_4=q_{33}^{-2}\mt{b}_4\mt{b}_2-\beta\mt{b}^2_3,&		
&\mt{b}_3\mt{b}_4=q_{33}^{-2}\mt{b}_4\mt{b}_3,&	
\end{align*}
where $\beta$ is given in \eqref{notation-beta}. 
Using Proposition \ref{prop-useful} for $z_i=\mt{b}_{i+1}$, $i\in \I_3$, 
and $w_1=-\beta\mt{b}^2_3$, we obtain 
\[
(\mt{b}_2+\mt{b}_3+\mt{b}_4)^N=(\mt{b}_2+\beta_1\mt{b}_3)^N
+(\beta_2\mt{b}_3+\mt{b}_4)^N=\mt{b}^N_2+(\beta_1^N+
\beta^N_2)\mt{b}^N_3+\mt{b}^N_4, 
\]
which proves the claim because 
$\beta_1^N+\beta^N_2=2(1+q_{33})^{-2}$. \vspace{.3cm}

Set $d_1=(q_{32,3})^{\frac{N(N-1)}{2}}$ and $d_2=q_{23}^{N(N-1)}$. Then,
from Claim $2$ and Remark \ref{obs-repe-veces}, we have 
\begin{align*}
\Delta(\lpbw{\td{3}2}^N)&=\lpbw{\td{3}2}^N\otimes 1	+
d_1\xi_2^N\lpbw{3}^Ng_{32}^N\otimes \lpbw{32}^N+
2d_2(1+q_{33})^{-N}\xi_1^N\xi_2^N\lpbw{3}^{2N}g_2^N\otimes y_2^N+
g_{\td{3}2}^N\otimes \lpbw{\td{3}2}^N\\[.2em]
                        &=\lpbw{\td{3}2}^N\otimes 1	
                        +\xi_2^N\mu_3(g_3^N-1)g_{32}^N\otimes \lpbw{32}^N+ \\
                        &\quad + 2(1+q_{33})^{-N}\xi_1^N\xi_2^Nq_{23}^{N(N-1)} 
                        \mu_3^{2N}(g_3^N-1)^{2N}g_2^N\otimes y_2^N+g_{\td{3}2}^N\otimes \lpbw{\td{3}2}^N.                    
\end{align*}
Hence
$\nu_{\td{3}2}=\lpbw{\td{3}2}^N+2\xi_1^N\mu_{3}\lpbw{32}^N+
\xi_1^N\xi_2^N\mu_{3}^2\lpbw{2}^N\in \mathcal{P}_{g^N_{\td{3}2}}^{\chi_{\td{3}2}^N}$,
and by Lemma \ref{lem-for-lifting} (ii), 
there exists $\mu_{\td{3}2}\in \Bbbk$ such that $\nu_{\td{3}2}=\mu_{\td{3}2}(g_{\td{3}2}^N-1)$. 
Consequently
$\lpbw{\td{3}2}^N=\mu_{\td{3}2}(g_{\td{3}2}^N-1)-2\xi_1^N\mu_{3}\lpbw{32}^N
-\xi_1^N\xi_2^N\mu_{3}^2\lpbw{2}^N$,	
where $\mu_{\td{3}2}=0$ if $g_{\td{3}2}^N=1$ or $\chi_{\td{3}2}^N\neq\varepsilon$.
\end{proof}

\subsection{Deformation of $N$-th power of the root vector of degree 4}\label{grade-4}
Below we give the possible 
deformation of the relation given by the $N$-th power of 
the root vector of degree four, which is given by 
the element $\pbw{\alpha_{\td{3}1}}=  \pbw{3}\pbw{31}-q_{3,31}\pbw{31}\pbw{3}$. 

\begin{lema}\label{lem-grade-4} 
There exists a scalar $\mu_{\td{3}1}\in\Bbbk$ such that the following identity holds in $A$:
\begin{align}\label{def-rel-grade-3=y3321}
\lpbw{\td{3}1}^N=\mu_{\td{3}1}(g_{\td{3}1}^N-1)-2\xi_1^N\mu_{3}\lpbw{31}^N-
\xi_1^N\xi_2^N\mu_{3}^2\lpbw{21}^N-\xi_2^N\mu_{\td{3}2}\lpbw{1}^N,
\end{align}
with $\mu_{\td{3}1}=0$ if $g_{\td{3}1}^N=1$ or 
$\chi_{\td{3}1}^N\neq\varepsilon$.
\end{lema}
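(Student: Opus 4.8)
The plan is to follow the same recipe used in Lemmas \ref{lem:def-degree-2} and \ref{lem:def-degree-3}: compute the comultiplication of the degree-four root vector $\lpbw{\td{3}1}$, deduce the comultiplication of its $N$-th power, identify a suitable skew-primitive element, and invoke Lemma \ref{lem-for-lifting} (ii). First I would compute $\Delta(\lpbw{\td{3}1})$ using the definition $\lpbw{\td{3}1}=\lpbw{3}\lpbw{31}-q_{3,31}\lpbw{31}\lpbw{3}$ together with the already-known coproduct of $\lpbw{31}$ from the proof of Lemma \ref{lem:def-degree-3}, and the primitivity of $\lpbw{3}$. This will express $\Delta(\lpbw{\td{3}1})$ as a sum $\mt{c}_1+\dots+\mt{c}_r$ of tensors, with $\mt{c}_1=\lpbw{\td{3}1}\otimes 1$ and $\mt{c}_r=g_{\td{3}1}\otimes \lpbw{\td{3}1}$ the ``trivial'' terms, and a handful of middle terms involving lower root vectors such as $\lpbw{3}^2,\lpbw{3},\lpbw{31},\lpbw{32}$ paired against $\lpbw{21},\lpbw{1}$.

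Next I would establish the analogue of Claims 1 and 2, namely a formula $\Delta(\lpbw{\td{3}1}^N)=\sum_k \lambda_k\, \mt{c}_k^N$ for appropriate scalars $\lambda_k$. The strategy is to separate the highest term $\mt{c}_r$ (which $q_{33}^{-1}$-commutes with the rest, so $q_{33}^N=1$ kills the cross terms), and then to handle the remaining sum by repeated application of Proposition \ref{prop-useful} and Corollary \ref{cor-rec-grado3}, exactly as the middle terms $\mt{b}_2,\mt{b}_3,\mt{b}_4$ were grouped in Claim 2 via auxiliary elements and the identities $[z_i,z_j]_c$. The commutation relations in Remark \ref{rmk:PBW-relations} are precisely the tool needed to verify the $q$-commutation hypotheses of those two results; the scalar $\beta_1^N+\beta_2^N=2(1+q_{33})^{-2}$ and the factors $\xi_1,\xi_2$ should reappear, producing the coefficients $2\xi_1^N\mu_3$, $\xi_1^N\xi_2^N\mu_3^2$ and $\xi_2^N\mu_{\td{3}2}$ that match the three correction terms in \eqref{def-rel-grade-3=y3321}.

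Once $\Delta(\lpbw{\td{3}1}^N)$ is in diagonal form, I would substitute the already-proved relations \eqref{def-rel-grade-1}, \eqref{def-rel-grade-2}, \eqref{def-rel-grade-3=y321} and \eqref{def-rel-grade-3=y332} for $\lpbw{i}^N,\lpbw{(i+1)i}^N,\lpbw{31}^N,\lpbw{\td{3}2}^N$, and use Remark \ref{obs-repe-veces} to absorb the Gauss-binomial scalars $q_{\bullet}^{N(N-1)/2}$ into the corresponding $N$-th powers (so that each such scalar acts as $1$). After collecting terms, the element
\[
\nu_{\td{3}1}:=\lpbw{\td{3}1}^N+2\xi_1^N\mu_{3}\lpbw{31}^N+\xi_1^N\xi_2^N\mu_{3}^2\lpbw{21}^N+\xi_2^N\mu_{\td{3}2}\lpbw{1}^N
\]
should satisfy $\Delta(\nu_{\td{3}1})=\nu_{\td{3}1}\otimes 1+g_{\td{3}1}^N\otimes\nu_{\td{3}1}$, i.e. $\nu_{\td{3}1}\in\mathcal{P}_{g^N_{\td{3}1}}^{\chi_{\td{3}1}^N}(A)$. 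Lemma \ref{lem-for-lifting} (ii) then yields a scalar $\mu_{\td{3}1}\in\Bbbk$ with $\nu_{\td{3}1}=\mu_{\td{3}1}(g_{\td{3}1}^N-1)$, and $\mu_{\td{3}1}=0$ whenever $g_{\td{3}1}^N=1$ or $\chi_{\td{3}1}^N\neq\varepsilon$, giving \eqref{def-rel-grade-3=y3321}.

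I expect the main obstacle to be the bookkeeping in $\Delta(\lpbw{\td{3}1}^N)$: because $\lpbw{\td{3}1}$ has degree four, its coproduct has several middle terms, and the grouping required to apply Proposition \ref{prop-useful} and Corollary \ref{cor-rec-grado3} is more delicate than in degree three. In particular, verifying that the cross-commutators among the auxiliary terms reduce exactly to the $q$-commutation hypotheses of those two lemmas — and that the surviving $N$-th powers are precisely $\lpbw{3}^{2N},\lpbw{3}^N,\lpbw{31}^N,\lpbw{32}^N$ with the right coefficients — will require careful use of every identity in Remark \ref{rmk:PBW-relations} and is where an error is most likely to creep in.
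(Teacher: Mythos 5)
Your proposal matches the paper's proof essentially step for step: decompose $\Delta(\lpbw{\td{3}1})$ into five elementary tensors, reduce $\Delta(\lpbw{\td{3}1}^N)$ to a sum of $N$-th powers via Proposition \ref{prop-useful} (with the scalar $\beta_1^N+\beta_2^N=2(1+q_{33})^{-2}$ appearing on the term paired with $\lpbw{31}$), substitute \eqref{def-rel-grade-1}--\eqref{def-rel-grade-3=y332} and Remark \ref{obs-repe-veces}, and apply Lemma \ref{lem-for-lifting} (ii) to exactly the element $\nu_{\td{3}1}$ you wrote down. The only organizational detail that differs is that in the paper it is the \emph{first} two tensors, $\lpbw{\td{3}1}\otimes 1$ and $\xi_2\lpbw{\td{3}2}g_1\otimes\lpbw{1}$, that $q_{33}^{-2}$-commute cleanly and split off, while the last tensor $g_{\td{3}1}\otimes\lpbw{\td{3}1}$ enters the non-trivial commutator with $\xi_1\xi_2\lpbw{3}^2g_{21}\otimes\lpbw{21}$ handled by Proposition \ref{prop-useful}; this is precisely the bookkeeping you flagged as the delicate point.
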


\begin{proof}
As in the proofs above, we decompose the coproduct of $\lpbw{\td{3}1}$
as $\Delta(\lpbw{\td{3}1})= \mt{a}_1+\mt{a}_2+\mt{a}_3+\mt{a}_4+\mt{a}_5$, 
where
\begin{align*}
	\mt{a}_1&=\lpbw{\td{3}1}\otimes 1, & \mt{a}_2&= 
	\xi_2\lpbw{\td{3}2}g_1\otimes \lpbw{1}, & \mt{a}_3&= 
	\xi_2\xi_1\lpbw{3}^2g_{21}\otimes\lpbw{21},\\
\mt{a}_4&=q_{3,3} \xi_2 \lpbw{3}g_{31}\otimes \lpbw{31},&	
\mt{a}_5&= g_{\td{3}1}\otimes \lpbw{\td{3}1}. 
\end{align*}

\smallbreak
\noindent 
{\it Claim: $\Delta(\lpbw{\td{3}1}^N)=\mt{a}_1^N+\mt{a}_2^N+\mt{a}_3^N+2(1+q_{33})^{-2}\mt{a}_4^N+\mt{a}_5^N$.}
\smallbreak

\noindent 
Since $\mt{a}_1\mt{a}_j=q_{33}^{-2}\mt{a}_j\mt{a}_1$ and 
$\mt{a}_2\mt{a}_k=q_{33}^{-2}\mt{a}_k\mt{a}_2$, 
for all $1\leq j\leq 5$ and $2\leq k\leq 5$, it follows that
\[
\Delta(\lpbw{\td{3}1}^N)=\mt{a}_1^N+\mt{a}_2^N+(\mt{a}_3+\mt{a}_4+\mt{a}_5)^N.
\]
Moreover, among the elements in the last summand it holds 
\begin{align*}
	\mt{a}_3\mt{a}_4= q_{33}^{-2}\mt{a}_4\mt{a}_3,\qquad 	
	\mt{a}_3\mt{a}_5= q_{33}^{-2}\mt{a}_5\mt{a}_3-\beta\mt{a}^2_4,
	\qquad \mt{a}_4\mt{a}_5= q_{33}^{-2}\mt{a}_5\mt{a}_4,
\end{align*}
where $\beta$ is given in \eqref{notation-beta}. 
Applying Proposition \ref{prop-useful} for 
$z_i=\mt{a}_{i+2}$, $i\in \I_3$, and $w_1=-\beta\mt{a}^2_4$ 
we get 
\[
(\mt{a}_3+\mt{a}_4+\mt{a}_5)^N=(\mt{a}_3+\beta_1\mt{a}_4)^N +
(\beta_2\mt{a}_4+\mt{a}_5)^N=\mt{a}^N_3+(\beta_1^N+\beta^N_2)\mt{a}^N_4+\mt{a}^N_5, 
\]
and consequently the claim follows 
because $\beta_1^N+\beta^N_2=2(1+q_{33})^{-2}$.

\smallbreak
From the previous claim and Remark \ref{obs-repe-veces}, 
taking $c_1=q_{1,\td{3}2}^{\frac{N(N-1)}{2}}$, $c_2=q_{21,3}^{N(N-1)}$ and 
$c_3=q_{31,3}^{\frac{N(N-1)}{2}}$, we have
\begin{align*}
\Delta(\lpbw{\td{3}1}^N)&=\lpbw{\td{3}1}^N\otimes 1
+c_1\xi_2^N\lpbw{\td{3}2}^Ng_1^N\otimes \lpbw{1}^N+
c_2\xi_2^N\xi_1^N\lpbw{3}^{2N}g_{21}^N\otimes\lpbw{21}^N+\\
&\quad + 2c_3(1+q_{33})^{-2}\xi_2^N \lpbw{3}^Ng_{31}^N\otimes \lpbw{31}^N  
+ g_{\td{3}1}^N\otimes \lpbw{\td{3}1}^N\\[.2em]
&=\lpbw{\td{3}1}^N\otimes 1+c_1\xi_2^N\Big(\mu_{\td{3}2}(g_{\td{3}2}^N-1)
-2\xi_1^N\mu_{3}\lpbw{32}^N-\xi_1^N\xi_2^N\mu_{3}^2\lpbw{2}^N\Big)g_1^N\otimes \lpbw{1}^N+ \\           
&\quad + c_2\xi_2^N\xi_1^N\mu_3^{2N}(g_3^N-1)^{2N} g_{21}^N\otimes\lpbw{21}^N+
2c_3(1+q_{33})^{-2}\xi_2^N \mu_3^{N}(g_3^N-1)^{N}  g_{31}^N\otimes \lpbw{31}^N +\\
&\quad + g_{\td{3}1}^N\otimes \lpbw{\td{3}1}^N.
\end{align*}
Hence $	\nu_{\td{3}1}=\lpbw{\td{3}1}^N+2\xi_1^N\mu_{3}\lpbw{31}^N+
\xi_1^N\xi_2^N\mu_{3}^2\lpbw{21}^N+
\xi_2^N\mu_{\td{3}2}\lpbw{1}^N\in \mathcal{P}_{g^N_{\td{3}1}}^{\chi_{\td{3}1}^N}$.
By Lemma \ref{lem-for-lifting} (ii), 
there is $\mu_{\td{3}1}\in \Bbbk$ such that 
$\nu_{\td{3}1}=\mu_{\td{3}1}(g_{\td{3}1}^N-1)$ and consequently
\[
\lpbw{\td{3}1}^N=\mu_{\td{3}1}(g_{\td{3}1}^N-1)-2\xi_1^N\mu_{3}\lpbw{31}^N-
\xi_1^N\xi_2^N\mu_{3}^2\lpbw{21}^N-\xi_2^N\mu_{\td{3}2}\lpbw{1}^N,
\] 
where $\mu_{\td{3}1}=0$ $g_{\td{3}1}^N=1$ or 
$\chi_{\td{3}1}^N\neq\varepsilon$.
\end{proof}

\subsection{Deformation of $N$-th power of the root vector of degree 5}\label{grade-5}
In this subsection we complete the list of all possible deformations by giving 
the lifting of the relation given by the power of the root vector of degree 5,
which is given by the element $\pbw{\alpha_{\td{2}1}}=\pbw{2}\pbw{\td{3}1}-q_{2,\td{3}1}\pbw{\td{3}1}\pbw{2}$. In order to do this, 
we define the elements:
\begin{align}\label{b-elements}
	b_2=\lpbw{32}^2 - q_{32}^{-1} q_{33}^{-2}(1+q_{33}^{-1})\lpbw{\td{3}2}\lpbw{2},
	\quad b_3=q_{33}^{-1}\lpbw{\td{3}2}-\xi_2\lpbw{3}\lpbw{32}+\xi_1\xi_2\lpbw{3}^2\lpbw{2}.
\end{align}

Following the steps of the previous subsections, we decompose 
the coproduct of $x_{\alpha_{\td{2}1}}$ as a sums of elementary 
tensors 
$\Delta(\lpbw{\td{2}1})= \mt{a}_1+\mt{a}_2+\mt{a}_3+\mt{a}_4+\mt{a}_5+\mt{a}_6$
with 
\begin{equation}\label{def-as}
\begin{aligned}
	\mt{a}_1&=\lpbw{\td{2}1}\otimes 1, & \mt{a}_4&=
	 -\xi_2  q_{33}q_{32}^{-1}\big(\lpbw{32}
	 -\xi_2 \lpbw{3}\lpbw{2}\big)    g_{31}\otimes\lpbw{31},\\
	\mt{a}_2&=-\xi_1\xi_2q_{32}^{-1}b_2g_1\otimes \lpbw{1}, &\mt{a}_5&
	= \xi_2  \lpbw{2}g_{\td{3}1}\otimes\lpbw{\td{3}1}, \\
	\mt{a}_3&=\xi_2 q_{32}^{-2}b_3 g_{21}\otimes \lpbw{21}, & \mt{a}_6&
	= g_{\td{2}1}\otimes \lpbw{\td{2}1}.
\end{aligned}
\end{equation}

\begin{lema}\label{def-grade-5} 
$
\Delta(\lpbw{\td{2}1}^N)= \mt{a}^N_1+\mt{a}^N_2+\mt{a}^N_3+2(1+q_{33})^{-N}\mt{a}^N_4+\mt{a}_5^N   +\mt{a}_6^N.
$
\end{lema}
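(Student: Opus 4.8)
The plan is to compute $\Delta(\lpbw{\td{2}1}^N)$ by exploiting the $q$-commutation relations among the elementary tensors $\mt{a}_1,\ldots,\mt{a}_6$ appearing in \eqref{def-as}, exactly as in the previous lemmas, but with one extra layer of complication coming from the degree-five root vector. First I would establish that $\mt{a}_1$ and $\mt{a}_6$ $q$-commute with everything: since $\lpbw{\td{2}1}$ has height-five homogeneity, one checks that $\mt{a}_1(\sum_{j\geq 2}\mt{a}_j)=q_{33}^{-2}(\sum_{j\geq 2}\mt{a}_j)\mt{a}_1$ and that $(\sum_{j\leq 5}\mt{a}_j)\mt{a}_6=q_{33}^{-1}\mt{a}_6(\sum_{j\leq 5}\mt{a}_j)$, using the explicit formulas for the $g_\alpha$-degrees and $\chi_\alpha$-weights together with the identities $q_{ij}q_{ji}=q_{ii}^{a_{ij}}$. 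Because $q_{33}^N=1$, the $N$-th power then splits off $\mt{a}_1^N$ and $\mt{a}_6^N$ cleanly, leaving me to analyze $(\mt{a}_2+\mt{a}_3+\mt{a}_4+\mt{a}_5)^N$.

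Next I would compute the pairwise commutators among $\mt{a}_2,\mt{a}_3,\mt{a}_4,\mt{a}_5$. The expectation, by analogy with Claim~2 of Lemma~\ref{lem:def-degree-3} and the Claim in Lemma~\ref{lem-grade-4}, is that most pairs $q_{33}^{-2}$-commute, while exactly one ``central'' pair (here I anticipate $\mt{a}_3$ and $\mt{a}_5$, the analogues of the $z_1,z_3$ positions) fails to commute, producing a correction term proportional to $\mt{a}_4^2$, the middle element. In other words I expect a relation of the shape $\mt{a}_3\mt{a}_5=q_{33}^{-2}\mt{a}_5\mt{a}_3-\beta\mt{a}_4^2$ (with $\beta$ as in \eqref{notation-beta}), together with $\mt{a}_2$ commuting appropriately with $\mt{a}_3,\mt{a}_4,\mt{a}_5$ so that it peels off as $\mt{a}_2^N$. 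This reduces the core computation to $(\mt{a}_3+\mt{a}_4+\mt{a}_5)^N$, to which I would apply Proposition~\ref{prop-useful} with $z_i=\mt{a}_{i+2}$, $w_1=-\beta\mt{a}_4^2$, $\gamma_1=\beta_1$, $\gamma_2=\beta_2$, yielding $\mt{a}_3^N+(\beta_1^N+\beta_2^N)\mt{a}_4^N+\mt{a}_5^N$, and the constant simplifies to $2(1+q_{33})^{-N}$ exactly as in the degree-four case.

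The genuinely hard part is verifying these commutation relations, and this is where the degree-five root vector differs from the lower-degree ones. The elements $b_2$ and $b_3$ defined in \eqref{b-elements} are not monomials but genuine linear combinations of products of root vectors, so commuting $\mt{a}_2$ (which carries $b_2$) and $\mt{a}_3$ (which carries $b_3$) past $\mt{a}_4$ and $\mt{a}_5$ forces me to use the full list of braided commutation relations in Remark~\ref{rmk:PBW-relations}, not merely the diagonal $q$-commutations. I would therefore first record how $b_2$ and $b_3$ interact with $\lpbw{2}$, $\lpbw{31}$, and $\lpbw{\td{3}1}$, and how the group-like grading shifts $g_1$, $g_{21}$, $g_{31}$, $g_{\td{3}1}$ contribute scalar factors; the $b$-elements were presumably chosen precisely so that the unwanted cross terms cancel and only the clean $q_{33}^{-2}$-commutations (plus the single $-\beta\mt{a}_4^2$ correction) survive.

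Once the commutation relations are in hand, assembling the final formula is routine: I combine the split-off terms $\mt{a}_1^N$, $\mt{a}_2^N$, $\mt{a}_6^N$ with the Proposition~\ref{prop-useful} output $\mt{a}_3^N+2(1+q_{33})^{-N}\mt{a}_4^N+\mt{a}_5^N$ to obtain exactly the claimed expression. I anticipate that the main obstacle is purely computational stamina: checking that the $b_2$, $b_3$ combinations produce no stray terms beyond those governed by Proposition~\ref{prop-useful} requires careful bookkeeping with the relations of Remark~\ref{rmk:PBW-relations}, and it is here—rather than in any conceptual step—that an error is most likely to creep in.
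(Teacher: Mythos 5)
Your overall strategy (peel off $q$-commuting pieces, then invoke Proposition \ref{prop-useful}) is the right one, but the commutation structure you anticipate is wrong, and the plan as written breaks at the first verification step. The pair $(\mt{a}_2,\mt{a}_6)$ does \emph{not} $q$-commute: the $q$-commutator $w_1=\mt{a}_2\mt{a}_6-q_{33}^{-2}\mt{a}_6\mt{a}_2$ picks up, in the second tensor factors, the braided commutator $[\lpbw{\td{2}1},\lpbw{1}]_c= q_{2,\td{2}1}\xi_2\lpbw{\td{3}1}\lpbw{21}-q_{31,1}q_{2,32}\xi_1\lpbw{31}^2\neq 0$ from Remark \ref{rmk:PBW-relations}, so $w_1\neq 0$ and neither $\mt{a}_6^N$ nor $\mt{a}_2^N$ can be split off individually; your reduction to $(\mt{a}_2+\mt{a}_3+\mt{a}_4+\mt{a}_5)^N$, and then to $(\mt{a}_3+\mt{a}_4+\mt{a}_5)^N$, is therefore not available. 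Likewise the correction $w_2=\mt{a}_3\mt{a}_5-q_{33}^{-2}\mt{a}_5\mt{a}_3$ is \emph{not} by itself proportional to $\mt{a}_4^2$: it equals $\gamma\mt{a}_4^2-w_1$ with $\gamma=\gamma_1\gamma_2(1-q_{33}^{-2})$, $\gamma_1=1/(1+q_{33})$, $\gamma_2=q_{33}/(1+q_{33})$. Only the \emph{sum} $w_1+w_2$ collapses to the required multiple of $\mt{a}_4^2$.

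This is precisely why Proposition \ref{prop-useful} was stated for $2k+1$ elements with several corrections $w_1,\dots,w_k$ and a hypothesis on $\sum_i w_i$ rather than on each $w_i$ separately. The paper's proof first splits off $\mt{a}_1^N$ (here $\mt{a}_1$ genuinely $q$-commutes with everything) and then applies the proposition with $k=2$ to all five remaining elements $z_1,\dots,z_5=\mt{a}_2,\dots,\mt{a}_6$, the two antipodal pairs $(\mt{a}_2,\mt{a}_6)$ and $(\mt{a}_3,\mt{a}_5)$ jointly supplying $w_1+w_2$ proportional to $z_3^2=\mt{a}_4^2$. Your three-term reduction would happen to output the same final formula, but the intermediate identities you propose to verify are false, so the argument does not go through as stated.
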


\begin{proof}
From the fact $\mt{a}_1\mt{a}_i=\xi_2\mt{a}_i\mt{a}_1$ for all $i\in \I_6$, 
we obtain 
\begin{align}\label{eq-reduction-1}
\big(\sum_{i=1}^{6}\mt{a}_i\big)^N=\mt{a}_1^N+\big(\sum_{i=2}^{6}\mt{a}_i\big)^N. 
\end{align}
Let $w_1=\mt{a}_2\mt{a}_6-\mt{a}_6\mt{a}_2$ and $w_2=\gamma\mt{a}_4^2- w_1$, with $\gamma= (1-q_{33}^{-1})/(1+q_{33})$. Then we have
\begin{align*}
\mt{a}_2\mt{a}_6&=\xi_2\mt{a}_6\mt{a}_2+w_1, & 	\mt{a}_3\mt{a}_5&=\xi_2\mt{a}_5\mt{a}_3+ w_2, & \mt{a}_i\mt{a}_j&=\xi_2\mt{a}_j\mt{a}_i,\,\,3\leq i<j\leq 6,\,\,\,i+j\neq 8.
\end{align*}
Since $\gamma=\gamma_1\gamma_2(1-q_{33}^{-2})$ with  $\gamma_1=1/(1+q_{33})$ and $\gamma_2=q_{33}/(1+q_{33})$, 
the result follows from  Proposition \ref{prop-useful}.
\end{proof}

The next lemma gives us the expression of $\mt{a}^N_2$.

\begin{lema} \label{lem-comutation-recur}
Let $i,j,k,n\geq 1$.
\begin{enumerate}
	\item [\rm (i)]  The following identities hold:
	\begin{align*}
		\lpbw{2}\lpbw{3}^i&= q_{32}^{-i}\big(\lpbw{3}^i\lpbw{2} - 	
		(i)_{q_{33}^{-1}}\lpbw{3}^{i-1}\lpbw{32}+
		q_{33}^{-1}\tbinom{i}{2}_{q_{33}^{-1}}\lpbw{3}^{i-2}\lpbw{\td{3}2}\big),\\[.2em]
		\lpbw{32}\lpbw{3}^i&= q_{33}^{-i} q_{32}^{-i}\big(\lpbw{3}^i \lpbw{32} - 
		  (i)_{q_{33}^{-1}} \lpbw{3}^{i-1}\lpbw{\td{3}2} \big),\\[.2em]
		\lpbw{\td{3}2}\lpbw{3}^i&=q_{33}^{-2i} q_{32}^{-i}\lpbw{3}^i\lpbw{\td{3}2},\\[.2em]
		\lpbw{2} \lpbw{\td{3}2}^j&=q_{33}^{-2j} q_{32}^{-2j}\big( \lpbw{\td{3}2}^j \lpbw{2}
		  -q_{33}^{2} q_{32}\xi_1  (j)_{q_{33}^{-2}}\lpbw{\td{3}2}^{j-1}\lpbw{32}^2\big), \\[.2em]
		\lpbw{32}\lpbw{\td{3}2}^j&= q_{33}^{-2j}q_{32}^{-j} \lpbw{\td{3}2}^j \lpbw{32},\\[.2em]
		\lpbw{2}\lpbw{32}^k&= q_{33}^{-2k} q_{32}^{-k}\lpbw{32}^k\lpbw{2}.
	\end{align*}
	\item [\rm (ii)] If $b_2$ is the element given in \eqref{b-elements} then
	\[b_2^n=\!\!\!\sum_{
		j+k=n}\!\!\!  (-1)^jq_{32}^{-j(2n-j)}    
	\binom{n}{j,k}_{q_{33}^{-2}} q_{33}^{-j(2n-j+1)} (1+q_{33}^{-1})^{j}\lpbw{\td{3}2}^j \lpbw{32}^{2k} \lpbw{2}^j.\]
	In particular, $b_2^N=\lpbw{32}^{2N}-(1+q_{33}^{-1})^N \lpbw{\td{3}2}^N \lpbw{2}^N$.
	
	 	\item [\rm (iii)]  The following identity hold:
	 	\[
		\mt{a}^N_2=-\xi_1^N\xi_2^N \big(\lpbw{32}^{2N}
		-(1+q_{33}^{-1})^N \lpbw{\td{3}2}^N \lpbw{2}^N\big) g_1^N\otimes \lpbw{1}^N.
		\]	 
\end{enumerate}
\end{lema}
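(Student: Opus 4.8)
The plan is to establish the three parts in order, using (i) to prove (ii) and (ii) to prove (iii).

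\emph{Part (i).} The starting point is that the commutation relations among root vectors collected in Remark \ref{rmk:PBW-relations} are consequences of the quantum Serre relations alone: they are homogeneous for the $\Z^3$-grading and involve no power-of-root-vector relation, so they hold verbatim in the lifting $A$ once each braided commutator $[\pbw{\alpha},\pbw{\beta}]_c$ is read as the $q$-commutator $\lpbw{\alpha}\lpbw{\beta}-q_{\alpha\beta}\lpbw{\beta}\lpbw{\alpha}$ of the elements introduced in \eqref{def-elements}. Specializing, one reads off the ``length-two'' base relations
\begin{align*}
\lpbw{2}\lpbw{3}&=q_{32}^{-1}(\lpbw{3}\lpbw{2}-\lpbw{32}), &
\lpbw{32}\lpbw{3}&=q_{33}^{-1}q_{32}^{-1}(\lpbw{3}\lpbw{32}-\lpbw{\td{3}2}),\\
\lpbw{\td{3}2}\lpbw{3}&=q_{33}^{-2}q_{32}^{-1}\lpbw{3}\lpbw{\td{3}2}, &
\lpbw{32}\lpbw{\td{3}2}&=q_{33}^{-2}q_{32}^{-1}\lpbw{\td{3}2}\lpbw{32},\\
\lpbw{2}\lpbw{32}&=q_{33}^{-2}q_{32}^{-1}\lpbw{32}\lpbw{2}, &
\lpbw{2}\lpbw{\td{3}2}&=q_{33}^{-2}q_{32}^{-2}\big(\lpbw{\td{3}2}\lpbw{2}-q_{33}^{2}q_{32}\xi_1\lpbw{32}^2\big).
\end{align*}
Each of the six identities of (i) then follows by a direct induction on the exponent ($i$, $j$ or $k$) from the corresponding base relation; the $q$-numbers $(i)_{q_{33}^{-1}}$, $\binom{i}{2}_{q_{33}^{-1}}$ and $(j)_{q_{33}^{-2}}$ appear as the telescoping sums of the scalar factors picked up at each step. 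I would write out the induction for, say, the first identity and note that the other five are identical in spirit.

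\emph{Part (ii).} I would prove the closed formula for $b_2^n$ by induction on $n$, keeping the right-hand side in the normal-ordered form $\sum_{j+k=n}c_{n,j}\,\lpbw{\td{3}2}^{j}\lpbw{32}^{2k}\lpbw{2}^{j}$. Multiplying $b_2^{n-1}$ on the right by $b_2=\lpbw{32}^2-q_{32}^{-1}q_{33}^{-2}(1+q_{33}^{-1})\lpbw{\td{3}2}\lpbw{2}$ and reordering the resulting monomials back into normal form requires exactly the identities of (i): moving $\lpbw{2}^{j}$ past $\lpbw{32}^2$ produces the scalar $q_{33}^{-4j}q_{32}^{-2j}$, moving $\lpbw{32}^{2k}$ past $\lpbw{\td{3}2}$ produces $q_{33}^{-4k}q_{32}^{-2k}$, while moving $\lpbw{2}^{j}$ past $\lpbw{\td{3}2}$ (the only non-$q$-commuting pass) creates an extra $\lpbw{32}^2$ term weighted by the $q$-number $(j)_{q_{33}^{2}}$. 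Collecting the three contributions to a fixed monomial $\lpbw{\td{3}2}^{j'}\lpbw{32}^{2k'}\lpbw{2}^{j'}$ yields a $q$-Pascal recursion for $c_{n,j'}$, whose verification is the technical heart of the argument. The ``in particular'' statement is then immediate: at $n=N$ every coefficient $c_{N,j}$ with $0<j<N$ carries the factor $\binom{N}{j}_{q_{33}^{-2}}=0$, since $q_{33}^{-2}$ is again a primitive $N$-th root of unity ($N$ is odd), and the two surviving boundary terms reduce to $\lpbw{32}^{2N}$ and $-(1+q_{33}^{-1})^N\lpbw{\td{3}2}^N\lpbw{2}^N$ using $q_{32}^N=q_{33}^N=1$ and $(-1)^N=-1$.

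\emph{Part (iii).} This is a short consequence of (ii). Since $\Delta$ takes values in the ordinary tensor-product algebra $A\otimes A$, where multiplication is componentwise, from \eqref{def-as} one gets
\[
\mt{a}_2^N=(-\xi_1\xi_2 q_{32}^{-1})^N\,(b_2\,g_1)^N\otimes\lpbw{1}^N.
\]
The element $b_2$ is homogeneous with associated group element $g_{32}^2$ and character $\chi_{32}^2$, so $g_1b_2=q_{1,32}^{2}\,b_2 g_1$ and hence $(b_2g_1)^N=q_{1,32}^{N(N-1)}b_2^N g_1^N=b_2^N g_1^N$, the last equality because $q_{1,32}^N=1$. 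Since $N$ is odd and $q_{32}^N=1$ one has $(-\xi_1\xi_2 q_{32}^{-1})^N=-\xi_1^N\xi_2^N$, and substituting the value of $b_2^N$ from (ii) gives exactly the asserted formula for $\mt{a}_2^N$.

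\emph{Main obstacle.} The genuine difficulty is concentrated in the inductive step of (ii). Because $\lpbw{2}$ and $\lpbw{\td{3}2}$ do not $q$-commute but generate an additional $\lpbw{32}^2$ term, the normal ordering is not a plain $q$-binomial expansion, and one must check that the accumulated $q$-number weights collapse precisely into the coefficient $(-1)^j q_{32}^{-j(2n-j)}\binom{n}{j,k}_{q_{33}^{-2}}q_{33}^{-j(2n-j+1)}(1+q_{33}^{-1})^j$. Everything else reduces to routine $q$-arithmetic and the root-of-unity vanishing of binomial coefficients.
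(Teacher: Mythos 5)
Your proposal is correct and follows essentially the same route as the paper: part (i) by induction from the base commutation relations (which the paper derives from the quantum Serre relations via Remark \ref{rmk:PBW-relations}), part (ii) by induction on $n$ using (i) together with the vanishing of $\binom{N}{j}_{q_{33}^{-2}}$ for $0<j<N$, and part (iii) by raising the elementary tensor $\mt{a}_2$ to the $N$-th power and absorbing the residual scalars $q_{32}^{-N}$, $q_{1,32}^{N(N-1)}$, $q_{32}^{-N^2}$ (the paper disposes of these via Remark \ref{obs-repe-veces} rather than the identity $q_{ij}^N=1$, but the effect is the same). In fact your write-up is more detailed than the paper's own proof, which omits the inductions entirely.
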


\begin{proof}
All identities of (i) are proved by induction and their proofs will be omitted. 
Using (i) and induction on $n$ we obtain the first part of (ii). 
The last part of (ii) follows from the fact that $\binom{N}{j,k}_{q_{33}^{-2}}\neq 0$
 only in the cases $(j,k)=(0,N)$ and $(j,k)=(N,0)$. 
 For (iii), observe that the last part of (ii) and 
 Remark \ref{obs-repe-veces} imply that
\begin{align*}
	\mt{a}^N_2=-\xi_1^N\xi_2^N q_{32}^{-N} q_{1,32}^{-N(N+1)}\big(\lpbw{32}^{2N}-
	q_{32}^{-N^2}(1+q_{33}^{-1})^N \lpbw{\td{3}2}^N \lpbw{2}^N\big) g_1^N\otimes \lpbw{1}^N.	
\end{align*}
\end{proof}

Now, we compute $a^N_3$.

\begin{lema}\label{lem-recu-2}
	Let $r,s,t\geq 0$,  $n\geq 1$ and  $\zeta_{r,s,t}=q_{33}^{-r}(1-q_{33}^{-1})^s(1+q_{33}^{-1})^t$. 
\begin{enumerate}
	\item [\rm (i)] If $b_3$ is the element given in \eqref{b-elements} then 
\[b_3^n=\!\!\!\sum_{\substack{j+k+l=n\\
		i+2j+k=2n}}\!\!\! (-1)^i\, \zeta_{j,i,k+l}\, q_{32}^{w(j,l)} 
		 \tbinom{n}{j,k,l}_{q_{33}^{-2}} \, \tfrac{(k)_{q_{33}^{-2}}!}{(k)_{q_{33}^{-1}}!}\,\,
\lpbw{3}^i\lpbw{\td{3}2}^j\lpbw{32}^k\lpbw{2}^l,\]
where $w(j,l)= -2\tbinom{n}{2}+\tbinom{j}{2}+\tbinom{n-l}{2}$ and $\tbinom{r}{s}=0$ if $r<s$. 
In particular, 
\begin{align*}
	b_3^N=\lpbw{\td{3}2}^N-  2(1+q_{33}^{-1})^N   \xi_2^N \lpbw{3}^N\lpbw{32}^N+
	\xi_1^N\xi_2^N\lpbw{3}^{2N}\lpbw{2}^N.	
\end{align*}

\item [\rm (ii)] The following identity hold:
\begin{align*}
		\mt{a}^N_3=	\xi_2^N \, b_3^N g_{21}^N\otimes \lpbw{21}^N.
\end{align*}
\end{enumerate}
\end{lema}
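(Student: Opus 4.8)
The plan is to prove the closed formula in (i) by induction on $n$, using as the engine the six commutation identities collected in Lemma~\ref{lem-comutation-recur}(i). Together these let one move every occurrence of $\lpbw{3}$ to the far left and sort the remaining letters, so that an arbitrary word in $\{\lpbw{3},\lpbw{\td{3}2},\lpbw{32},\lpbw{2}\}$ is rewritten in the normal form $\lpbw{3}^{i}\lpbw{\td{3}2}^{j}\lpbw{32}^{k}\lpbw{2}^{l}$ (with respect to the order $\lpbw{3}<\lpbw{\td{3}2}<\lpbw{32}<\lpbw{2}$). Concretely I would write $b_3^{n}=b_3\cdot b_3^{n-1}$, substitute the inductive expression, and multiply each of the three summands of $b_3$, namely $q_{33}^{-1}\lpbw{\td{3}2}$, $-\xi_2\lpbw{3}\lpbw{32}$ and $\xi_1\xi_2\lpbw{3}^{2}\lpbw{2}$, into each normal-ordered monomial, re-normal-ordering the result.

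The crux is the scalar bookkeeping. After re-ordering, the coefficient of each monomial $\lpbw{3}^{i}\lpbw{\td{3}2}^{j}\lpbw{32}^{k}\lpbw{2}^{l}$ must be checked to obey the Pascal-type recursion predicted by the proposed closed form, i.e.\ that the factors $\zeta_{j,i,k+l}$, $q_{32}^{w(j,l)}$, $\tbinom{n}{j,k,l}_{q_{33}^{-2}}$ and the ratio $(k)_{q_{33}^{-2}}!/(k)_{q_{33}^{-1}}!$ recombine correctly. I expect this to be the main obstacle: each of the three incoming terms spawns several normal-ordering corrections carrying $q$-integer weights such as $(i)_{q_{33}^{-1}}$ and $\tbinom{i}{2}_{q_{33}^{-1}}$, and all of these must collapse onto the single predicted coefficient. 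The exponent $w(j,l)=-2\tbinom{n}{2}+\tbinom{j}{2}+\tbinom{n-l}{2}$ and the factorial ratio are exactly what render this matching consistent, and verifying it is the technical heart of the argument.

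For the ``in particular'' statement I would specialize at $n=N$. As $N$ is odd, $q_{33}^{-1}$ and $q_{33}^{-2}$ are primitive $N$-th roots of unity, so $\tbinom{N}{j,k,l}_{q_{33}^{-2}}$ vanishes unless one of $j,k,l$ equals $N$; the constraint $i+2j+k=2N$ then leaves exactly the triples $(j,k,l)\in\{(N,0,0),(0,N,0),(0,0,N)\}$, with $i=0,N,2N$ respectively. Every power of $q_{32}$ disappears because $q_{32}^{N}=1$ and $\tbinom{N}{2}$ is a multiple of $N$. The outer two terms are immediate once one uses $\xi_1(1+q_{33}^{-1})=\xi_2$ to see $\zeta_{N,0,0}=1$ and $\zeta_{0,2N,N}=\xi_1^{N}\xi_2^{N}$. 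For the middle term I would evaluate the factorial ratio through $(k)_{q_{33}^{-2}}!/(k)_{q_{33}^{-1}}!=\prod_{s=1}^{k}(1+q_{33}^{-s})/(1+q_{33}^{-1})^{k}$ together with $\prod_{s=1}^{N}(1+q_{33}^{-s})=\prod_{\omega^{N}=1}(1+\omega)=2$ (valid since $N$ is odd), which yields the stated coefficient of $\lpbw{3}^{N}\lpbw{32}^{N}$.

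Finally, for (ii) I would compute $\mt{a}_3^{N}$ directly. Since $\Delta$ is an algebra map into the ordinary tensor-product algebra, raising the elementary tensor $\mt{a}_3=\xi_2 q_{32}^{-2}\,b_3 g_{21}\otimes\lpbw{21}$ to the $N$-th power is componentwise: $\mt{a}_3^{N}=(\xi_2 q_{32}^{-2})^{N}(b_3 g_{21})^{N}\otimes\lpbw{21}^{N}$. The scalar reduces to $\xi_2^{N}$ because $q_{32}^{2N}=1$. As $b_3$ is $\Gamma$-homogeneous of degree $2\alpha_3+\alpha_2$, one has $g_{21}b_3=\chi_{\td{3}2}(g_{21})\,b_3 g_{21}$, whence $(b_3 g_{21})^{N}=\chi_{\td{3}2}(g_{21})^{\binom{N}{2}}b_3^{N}g_{21}^{N}=b_3^{N}g_{21}^{N}$, the character power being trivial since its $N$-th power is a product of scalars $q_{ij}^{N}=1$. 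Combining these gives $\mt{a}_3^{N}=\xi_2^{N}b_3^{N}g_{21}^{N}\otimes\lpbw{21}^{N}$, as asserted; note that part~(i) then makes this fully explicit.
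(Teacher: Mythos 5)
Your argument follows the paper's proof essentially verbatim: the closed formula in (i) is proved by induction on $n$ using the commutation rules of Lemma~\ref{lem-comutation-recur}(i) (the paper likewise omits the coefficient bookkeeping), the specialization at $n=N$ uses the vanishing of the $q$-multinomial coefficients together with $\prod_{s=1}^{N}(1+q_{33}^{-s})=2$, and (ii) is the same componentwise computation of the $N$-th power of an elementary tensor, with the residual powers of $q$ disposed of as in Remark~\ref{obs-repe-veces}. One caveat: if one actually carries out the middle-term evaluation you describe, the coefficient of $\lpbw{3}^{N}\lpbw{32}^{N}$ comes out as $(-1)^{N}\,\zeta_{0,N,N}\cdot 2(1+q_{33}^{-1})^{-N}=-2\xi_1^{N}=-2(1+q_{33}^{-1})^{-N}\xi_2^{N}$, not the displayed $-2(1+q_{33}^{-1})^{N}\xi_2^{N}$; the sign of the exponent in the stated formula appears to be a typo (the value $-2\xi_1^{N}$ is the one consistent with the scalars \eqref{scalars-alpha} used downstream), so your claim that the evaluation ``yields the stated coefficient'' glosses over a discrepancy you would have detected by writing out that one term.
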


\begin{proof}
(i) The first part follows by induction on $n$ and the proof will be omitted.  For the second part, 
notice that $\tbinom{n}{j,k,l}_{q_{33}^{-2}}\neq 0$ only if either 
$(i,j,k,l)=(0,N,0,0)$, or  $(i,j,k,l)=(N,0,N,0)$ or $(i,j,k,l)=(2N,0,0,N)$. 
Also, for $k=N$, we have
\[ 
\frac{(k)_{q_{33}^{-2}}!}{(k)_{q_{33}^{-1}}!}= 
\frac{(N)_{q_{33}^{-2}}!}{(N)_{q_{33}^{-1}}!}=
 \frac{\prod_{1\le i\le N}(1+q_{33}^{-i})}{(1+q_{33}^{-1})^N}\overset{(\ast)}{=}\frac{2}{(1+q_{33}^{-1})^N},
\]
where the equality $(\ast)$ follows from 
$t^N-1=\prod_{1\le i\le N}(t-q_{33}^{-i})$ when $t=-1$. 
From these previous facts, the second part of (i) follows. \smallbreak

\noindent (ii) From Remark \ref{obs-repe-veces} it follows that
%
	$$\mt{a}^N_3=(\xi_2 q_{32}^{-2}b_3 g_{21}\otimes \lpbw{21})^N
	=\xi_2^N q_{32}^{-2N}q_{21,\td{3}2}^{\tfrac{N(N-1)}{2}}b_3^N g_{21}^N\otimes \lpbw{21}^N
	=\xi_2^N b_3^N g_{21}^N\otimes \lpbw{21}^N.$$

\end{proof}

Finally, we compute $a^N_4$.

\begin{lema}\label{lem-recu-3}
	Let $r,s,t\geq 0$ and  $\zeta_{r,s,t}=q_{33}^{-r}(1-q_{33}^{-1})^s(1+q_{33}^{-1})^t$.
\begin{enumerate}
	\item [\rm (i)]   If $n\geq 1$, then 
	
	\[(\lpbw{32}-\xi_2\lpbw{3}\lpbw{2})^n=\!\!\!\sum_{\substack{j+k+l=n\\
			i+2j+k=n}}\!\!\!  (-1)^i q_{32}^{\td{w}(j,l)}\, \tfrac{(j)_{q_{33}^{-1}}! 
			(l)_{q_{33}^{-1}}!}{{(j)_{q_{33}^{-2}}!\,(i)_{q_{33}^{-1}}!}}\, 
			 \tbinom{n}{j,k,l}_{q_{33}^{-1}} \zeta_{j,i+j,i}	\,\, 
			   \lpbw{3}^i \lpbw{\td{3}2}^j \lpbw{32}^k \lpbw{2}^l\]
	where $\td{w}(j,l)= -\tbinom{n}{2}+\tbinom{j}{2}+\tbinom{n-l}{2}$ and
	 $\tbinom{r}{s}=0$ if $r<s$. In particular, 	
	\begin{align*}
		(\lpbw{32}-\xi_2\lpbw{3}\lpbw{2})^N=\lpbw{32}^N-\xi_2^N  \lpbw{3}^N\lpbw{2}^N.
	\end{align*}
	
	\item [\rm (ii)] The following identity hold:
		\begin{align*}
		\mt{a}^N_4=	-\xi_2^N \,\big( \lpbw{32}^N-\xi_2^N 
		 \lpbw{3}^N\lpbw{2}^N \big)g_{31}^N\otimes \lpbw{31}^N.
	\end{align*}
\end{enumerate}
\end{lema}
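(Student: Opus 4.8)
The plan is to establish part (i) by induction on $n$ and then deduce part (ii) by a direct computation that substitutes the result of (i) into the expression for $\mt{a}_4$ recorded in \eqref{def-as}. Throughout I would abbreviate $w=\lpbw{32}-\xi_2\lpbw{3}\lpbw{2}$; both of its summands are homogeneous of $\Gamma$-character $\chi_{32}$, so $w$ is homogeneous with $g_{31}w=\chi_{32}(g_{31})\,wg_{31}=q_{31,32}\,wg_{31}$, a fact I will need for (ii).

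For part (i), the base case $n=1$ is immediate: the only admissible tuple is $(i,j,k,l)=(0,0,1,0)$, whose coefficient is $1$, recovering $w$. For the inductive step I would left-multiply the claimed normal form of $w^n$ by $w=\lpbw{32}-\xi_2\lpbw{3}\lpbw{2}$ and push both $\lpbw{32}$ and $\lpbw{3}\lpbw{2}$ past a generic PBW monomial $\lpbw{3}^i\lpbw{\td{3}2}^j\lpbw{32}^k\lpbw{2}^l$ back into normal form. The only commutation rules required are the six identities of Lemma \ref{lem-comutation-recur}(i), together with the $q$-commutation of $\lpbw{32}$ and $\lpbw{\td{3}2}$ from the last line of Remark \ref{rmk:PBW-relations}. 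Each such move spawns the lower-order corrections in $\lpbw{32}$ and $\lpbw{\td{3}2}$ (the $(i)_{q_{33}^{-1}}$ and $\binom{i}{2}_{q_{33}^{-1}}$ summands), and the substance of the argument is to verify that, after reindexing, the scalar attached to each surviving monomial reassembles into
\[
(-1)^i q_{32}^{\td{w}(j,l)}\, \tfrac{(j)_{q_{33}^{-1}}!\,(l)_{q_{33}^{-1}}!}{(j)_{q_{33}^{-2}}!\,(i)_{q_{33}^{-1}}!}\, \tbinom{n+1}{j,k,l}_{q_{33}^{-1}}\,\zeta_{j,i+j,i}
\]
at level $n+1$. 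This is the same kind of $q$-multinomial recursion that underlies the inductions omitted in Lemmas \ref{lem-comutation-recur} and \ref{lem-recu-2}, so I would exhibit the commutation rules used and declare the bookkeeping routine.

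For the ``in particular'' assertion I would set $n=N$ and invoke that $q_{33}^{-1}$ is a primitive $N$-th root of unity, so $\tbinom{N}{j,k,l}_{q_{33}^{-1}}=0$ unless one of $j,k,l$ equals $N$. Since the constraints $j+k+l=N$ and $i+2j+k=N$ force $i+j=l$, the case $j=N$ is excluded (it would need $i=-N$), leaving only $(i,j,k,l)=(0,0,N,0)$ and $(i,j,k,l)=(N,0,0,N)$. The first has $\td{w}(0,0)=0$, factorial ratio $1$, and $\zeta_{0,0,0}=1$, yielding $\lpbw{32}^N$; the second has $\td{w}(0,N)=-\binom{N}{2}$ with $q_{32}^{-\binom{N}{2}}=1$ (because $q_{32}^N=1$ and $N$ is odd), factorial ratio $1$, $(-1)^N=-1$, and $\zeta_{0,N,N}=(1-q_{33}^{-1})^N(1+q_{33}^{-1})^N=\xi_2^N$, yielding $-\xi_2^N\lpbw{3}^N\lpbw{2}^N$.

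For part (ii) I would write $\mt{a}_4=P\ot Q$ with $P=-\xi_2 q_{33}q_{32}^{-1}\,w\,g_{31}$ and $Q=\lpbw{31}$, so that $\mt{a}_4^N=P^N\ot\lpbw{31}^N$. Collecting the group element to the right via $g_{31}w=q_{31,32}\,wg_{31}$ gives $P^N=(-\xi_2 q_{33}q_{32}^{-1})^N\,q_{31,32}^{N(N-1)/2}\,w^N g_{31}^N$, exactly as for $\mt{a}_3^N$ in Lemma \ref{lem-recu-2}(ii). Using that $N$ is odd together with $q_{33}^N=q_{32}^N=q_{31,32}^N=1$, every $N$-th power scalar trivializes and $(-\xi_2)^N=-\xi_2^N$, so $P^N=-\xi_2^N w^N g_{31}^N$; substituting the value of $w^N$ from part (i) yields the claimed formula. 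The principal obstacle is the inductive step of (i): with four non-commuting generators and a triple layer of lower-order corrections, the reindexing of the $q$-multinomial coefficients is delicate, and particular care is needed with the $q_{32}$-exponent $\td{w}(j,l)$ and with the mixed $q_{33}^{-1}$/$q_{33}^{-2}$ factorials when matching the two sides.
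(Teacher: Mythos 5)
Your proposal is correct and follows essentially the same route as the paper: part (i) is proved by an induction on $n$ whose $q$-multinomial bookkeeping is left implicit (as in the paper), the case $n=N$ is extracted by observing that $\binom{N}{j,k,l}_{q_{33}^{-1}}$ survives only for $(i,j,k,l)=(0,0,N,0)$ and $(N,0,0,N)$, and part (ii) follows by commuting $g_{31}$ past the homogeneous element $\lpbw{32}-\xi_2\lpbw{3}\lpbw{2}$ and absorbing the resulting $N$-th power scalars. Your extra remark that the constraints force $i+j=l$ (ruling out $j=N$) is a small clarification the paper leaves implicit, but the argument is the same.
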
	

\begin{proof}
The first part of (ii) follows by induction on $n$. 
The second part of (i) is a consequence from the fact that 
$\tbinom{n}{j,k,l}_{q_{33}^{-1}}\neq 0$ only if either 
$(i,j,k,l)=(0,0,N,0)$ or $(i,j,k,l)=(N,0,0,N)$. In order to prove (ii), 
notice that by Remark \ref{obs-repe-veces},
		\begin{align*}
			\mt{a}_4^N&= \big(-\xi_2  q_{33}q_{32}^{-1}\big(\lpbw{32}
			-\xi_2 \lpbw{3}\lpbw{2}\big)  
			  g_{31}\otimes\lpbw{31}\big)^N\\[.2em]
			&= -\xi_2^N q_{32}^{-N}  q_{31,32}^{\tfrac{N(N-1)}{2}}\big(\lpbw{32}
			-\xi_2 \lpbw{3}\lpbw{2}\big)^N g_{31}^N\otimes\lpbw{31}^N\\[.2em]
			&=-\xi_2^N q_{32}^{-N}  q_{31,32}^{\tfrac{N(N-1)}{2}}\,
			\big( \lpbw{32}^N-\xi_2^N q_{32}^{-\tfrac{N(N - 1)}{2}}
			 \lpbw{3}^N\lpbw{2}^N \big)g_{31}^N\otimes \lpbw{31}^N,
		\end{align*}
and the result is proved.
	\end{proof}

In order to prove that main result of this subsection, 
we introduce the following scalars 
\begin{equation}\label{scalars-alpha}
	\begin{aligned}
		\lambda_{\td{3}1}&=\xi_2^N\mu_2,&
		\lambda_{31}&= 2\xi_1^N(\xi_2^N\mu_2\mu_3-\mu_{32}),\\	
		\lambda_{21}&=\xi_2^N(\xi_1^N\xi_2^N\mu_2\mu_3^2-2\xi_1^N\mu_3\mu_{32}+\mu_{\td{3}2}),&
		\lambda_{1}&=\xi_2^N(\xi_2^N\mu_2\mu_{\td{3}2}-\xi_1^N\mu_{32}^2).
	\end{aligned}
\end{equation} 

In the next lemma we present explicitly the deformation in degree 5.

\begin{lema}\label{prop-def-23321}
Let $\lambda_{\td{3}1}$, $	\lambda_{31}$, 	$\lambda_{21}$ 
and $\lambda_{1}$ as in \eqref{scalars-alpha}. Then there exists $\mu_{\td{2}1}\in \Bbbk$ such that 
	\begin{align}\label{def-rel-grade-3=y23321}
		\lpbw{\td{2}1}^N=\mu_{\td{2}1}(g_{\td{2}1}^N-1)
		-\lambda_{\td{3}1}\lpbw{\td{3}1}^N-\lambda_{31}\lpbw{31}^N
		-\lambda_{21}\lpbw{21}^N-\lambda_{1}\lpbw{1}^N,
	\end{align}
	and $\mu_{\td{2}1}=0$ if $g^N_{\td{2}1}=1$ or $\chi^N_{\td{2}1}\neq \epsilon$.
\end{lema}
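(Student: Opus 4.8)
The plan is to follow verbatim the strategy of Lemmas \ref{lem:def-degree-2}--\ref{lem-grade-4}: assemble the full coproduct $\Delta(\lpbw{\td{2}1}^N)$, substitute the lower-degree deformation formulas to reduce every cross term to an element of the group algebra $\Bbbk[g_i^N]$ tensored with a power of a root vector, recognise a skew-primitive combination $\nu_{\td{2}1}$, and finish with Lemma \ref{lem-for-lifting}(ii). The heavy preparation is already done: Lemma \ref{def-grade-5} decomposes $\Delta(\lpbw{\td{2}1}^N)$ as $\mt{a}^N_1+\mt{a}^N_2+\mt{a}^N_3+2(1+q_{33})^{-N}\mt{a}^N_4+\mt{a}^N_5+\mt{a}^N_6$, and Lemmas \ref{lem-comutation-recur}(iii), \ref{lem-recu-2}(ii) and \ref{lem-recu-3}(ii) give closed forms for $\mt{a}^N_2$, $\mt{a}^N_3$ and $\mt{a}^N_4$. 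First I would record the three remaining terms: $\mt{a}^N_1=\lpbw{\td{2}1}^N\otimes 1$, $\mt{a}^N_6=g_{\td{2}1}^N\otimes\lpbw{\td{2}1}^N$, and, after clearing the resulting $q^{N(N-1)/2}$ factor with Remark \ref{obs-repe-veces}, $\mt{a}^N_5=\xi_2^N\lpbw{2}^N g_{\td{3}1}^N\otimes\lpbw{\td{3}1}^N$; substituting \eqref{def-rel-grade-1} here already produces the coefficient $\lambda_{\td{3}1}=\xi_2^N\mu_2$ of \eqref{scalars-alpha}, which is a reassuring consistency check.

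Next I would substitute the deformations \eqref{def-rel-grade-1}, \eqref{def-rel-grade-2} and \eqref{def-rel-grade-3=y332} into the expressions for $\mt{a}^N_2$, $\mt{a}^N_3$ and $\mt{a}^N_4$, rewriting the monomials $\lpbw{32}^{2N}$, $\lpbw{\td{3}2}^N\lpbw{2}^N$, $\lpbw{3}^N\lpbw{32}^N$, $\lpbw{3}^{2N}\lpbw{2}^N$, $\lpbw{3}^N\lpbw{2}^N$, etc., in terms of $\mu_1,\mu_2,\mu_3,\mu_{32},\mu_{\td{3}2}$ and the group-likes $g_\alpha^N$. Collecting the left tensor factors over each right factor $1$, $\lpbw{1}^N$, $\lpbw{21}^N$, $\lpbw{31}^N$, $\lpbw{\td{3}1}^N$, $\lpbw{\td{2}1}^N$ should bring $\Delta(\lpbw{\td{2}1}^N)$ to the shape $\lpbw{\td{2}1}^N\otimes 1 + g_{\td{2}1}^N\otimes\lpbw{\td{2}1}^N$ plus four cross terms whose left factors lie in $\Bbbk[g_i^N]$.

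The point is then to verify that, with $\lambda_{\td{3}1},\lambda_{31},\lambda_{21},\lambda_1$ as in \eqref{scalars-alpha}, the element $\nu_{\td{2}1}:=\lpbw{\td{2}1}^N+\lambda_{\td{3}1}\lpbw{\td{3}1}^N+\lambda_{31}\lpbw{31}^N+\lambda_{21}\lpbw{21}^N+\lambda_1\lpbw{1}^N$ satisfies $\Delta(\nu_{\td{2}1})=\nu_{\td{2}1}\otimes 1+g_{\td{2}1}^N\otimes\nu_{\td{2}1}$. Equivalently, feeding in the coproducts of $\lpbw{\td{3}1}^N$, $\lpbw{31}^N$, $\lpbw{21}^N$ and $\lpbw{1}^N$ already computed in the proofs of Lemmas \ref{lem:def-degree-2}, \ref{lem:def-degree-3} and \ref{lem-grade-4}, one checks that the four cross terms telescope to zero. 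Once $\nu_{\td{2}1}\in\mathcal{P}^{\chi^N_{\td{2}1}}_{g^N_{\td{2}1}}(A)$ is established, Lemma \ref{lem-for-lifting}(ii) provides $\mu_{\td{2}1}\in\Bbbk$ with $\nu_{\td{2}1}=\mu_{\td{2}1}(g_{\td{2}1}^N-1)$, vanishing whenever $g^N_{\td{2}1}=1$ or $\chi^N_{\td{2}1}\neq\varepsilon$; rearranging yields \eqref{def-rel-grade-3=y23321}.

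The main obstacle is purely computational. This is the degree-five case, so after substitution one faces the largest collection of cross terms in the paper, each carrying products of up to three $\mu$-scalars together with the factors $\xi_1^N$, $\xi_2^N$, $(1+q_{33}^{-1})^N$ and $2(1+q_{33})^{-N}$. The delicate part is confirming that these collapse exactly into the four coefficients of \eqref{scalars-alpha}; in particular one must track that the contributions involving $\mu_2\mu_3$, $\mu_3\mu_{32}$ and $\mu_{32}^2$ recombine with the correct signs and powers of $\xi_i$. Coassociativity of $\Delta$ gives a useful sanity check on the bookkeeping, but it does not by itself determine the $\lambda$'s, so the explicit matching has to be carried out term by term.
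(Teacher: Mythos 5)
Your proposal matches the paper's proof exactly: the paper likewise combines Lemma \ref{def-grade-5} with the closed forms in Lemmas \ref{lem-comutation-recur}(iii), \ref{lem-recu-2}(ii) and \ref{lem-recu-3}(ii) to conclude that $\nu_{\td{2}1}$ is skew-primitive, then applies Lemma \ref{lem-for-lifting}(ii). Your additional remarks about substituting the lower-degree deformations and matching the coefficients of \eqref{scalars-alpha} simply spell out the bookkeeping that the paper leaves implicit.
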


\begin{proof}
From Lemma \ref{def-grade-5}, Lemma \ref{lem-comutation-recur} (iii), 
Lemma \ref{lem-recu-2} (ii) and Lemma \ref{lem-recu-3} (ii) it follows
\[
	\nu_{\td{2}1}= \lpbw{\td{2}1}^N+\alpha_{\td{3}1}\lpbw{\td{3}1}^N+
	\alpha_{31}\lpbw{31}^N+\alpha_{21}\lpbw{21}^N+\alpha_{1}\lpbw{1}^N\in 
	\mathcal{P}_{g^N_{\td{2}1}}^{\chi_{\td{2}1}^N}(A).
\]
Then, by Lemma \ref{lem-for-lifting} (ii), there is $\mu_{\td{2}1}\in \Bbbk$ 
such that $\nu_{\td{2}1}=\mu_{\td{2}1}(g_{\td{2}1}^N-1)$ 
and the assertion is proved.
\end{proof}


\section{Liftings of Nichols algebras of type $B_3$}
Let $\Gamma$, $V$ and $B=\toba(V)\#\Bbbk\Gamma$ be as in the paragraph
just before Section \ref{section-deform}. In particular, we keep
the notation for the elements $g_{\alpha} \in \Gamma$ and $\chi_{\alpha} 
\in \widehat{\Gamma}$ as in \eqref{notation-g}, \eqref{notation-chi} and 
\eqref{notation-chi-alpha}.

\subsection{Presentation of the algebras}
In order to be as clear and precise as possible,
we split the data for the presentation of the algebras into 
the following subsections.

\subsubsection*{The generators}
Let $R$ be the free associative algebra generated by 
elements $y_{\alpha}$ with $\alpha\in \Phi^{+}$ 
and $y_{i}:=y_{\alpha_{i}}$ for $i\in \I_{3}$, 
satisfying the relations given in \eqref{root-vector}, 
and the quantum Serre relations  
\[
\ad(y_{i})^{1-a_{ij}}({y_{j}})=0,\qquad i\neq j.
\]
The explicit description of all commutation 
relations arising from the previous ones,
can be seen in the Remark \ref{rmk:PBW-relations}.
Using the same arguments of \cite[p.11]{Bea}, one can check
that $R$ is a Hopf algebra in the category of Yetter-Drinfeld modules 
over $\Bbbk \Gamma$, where the structure is determined by setting 
$g\cdot y_{i} = \chi_{i}(g) y_{i}$ and $\delta(y_{i})= g_{i}\ot y_{i}$,
for all $1\leq i\leq 3$ and $g\in \Gamma$.
In particular, 
the bosonization $R\# \Bbbk \Gamma$ is an ordinary Hopf algebra. 

\subsubsection*{The parameters}
Let $(\mu_{\alpha})_{\alpha \in \Phi^{+}}$ be a family of scalars satisfying 
that $\mu_{\alpha} = 0$ if $g_{\alpha}^{N} = 1$ or $\chi_{\alpha}^{N}\neq \varepsilon$. Take $\xi_1, \xi_2$ as in \eqref{xi-scalar}.

\subsubsection*{The ideal of relations $J$}
Consider the two-sided ideal $J$ of $R\#\Bbbk\Gamma$ generated by the elements
corresponding to the relations in \eqref{def-rel-grade-1}, 
\eqref{def-rel-grade-2}, \eqref{def-rel-grade-3=y321}, 
\eqref{def-rel-grade-3=y332}, \eqref{def-rel-grade-3=y3321} and 
\eqref{def-rel-grade-3=y23321}.

\subsubsection*{The presentation}
For $\Gamma$, $V$, $\mathbf{\mu}=(\mu_{\alpha})_{\alpha \in \Phi^{+}}$ and
$\xi_{1}$, $\xi_{2}$ 
as above, 
we define the algebra $A(\Gamma, V, \mathbf{\mu})$ by the quotient 
$R\#\Bbbk\Gamma /J$. Explicitly, it is the algebra generated by the 
elements $y_{\alpha}$, $\alpha \in \Phi^{+}$, the group $\Gamma$ and satisfying the following relations:

\begin{align*}
gy_{i}g^{-1} &= \chi_{i}(g)y_{i}, 
\qquad \text{ for all }i \in \I_{3}, \,g\in \Gamma ,\\[.5em]
\ad(y_{i})^{1-a_{ij}}(y_{j})&= 0,\qquad 
\text{ for all } i\neq j,\ i,j\in \I_{3},\\[.5em]
 y_{i}^N& = \mu_i(g_i^N-1),\qquad   \text{ for all } i\in \I_3,\\[.5em]
y_{(i+1)i}^N & =\mu_{(i+1)i}\big(g_{(i+1)i}^N-1\big)
-\xi_2^N\mu_{i+1}\,\mu_{i}(g_i^N-1),\qquad \text{ for all } i\in \I_2,\\[.5em]
y_{31}^N & =\mu_{31}(g_{31}^N-1)
-\xi_2^N\mu_{3}\,\mu_{21}\big(g_{21}^N-1\big)-\xi_2^N(\mu_{32}-\xi_2^N\mu_{3}\mu_{2})\,
\mu_1(g_1^N-1),\\[.5em]
y_{\td{3}2}^N&=\mu_{\td{3}2}(g_{\td{3}2}^N-1)-2\xi_1^N\mu_{3}\mu_{32}\,(g_{32}^N-1)+\xi_1^N\xi_2^N\mu_{3}^2\mu_{2}\,(g_{2}^N-1),\\[.5em]
y_{\td{3}1}^N & = \mu_{\td{3}1}(g_{\td{3}1}^N-1)-2\xi_1^N\mu_{3}\mu_{31}\, (g_{31}^N-1)+\xi_1^N\xi_2^N\mu_{3}^2\mu_{21}\,(g_{21}^N-1)\\
&\quad -\xi_2^N\,(\xi_1^N\xi_2^N\mu_{3}^2\mu_{2}-2\xi_1^N\mu_{3}\mu_{32}+\mu_{\td{3}2})\mu_{1}\,(g_{1}^N-1),\\[.5em]
y_{\td{2}1}^N&= 
\mu_{\td{2}1}(g_{\td{2}1}^N-1)
-\xi_2^N\mu_{2}\mu_{\td{3}1}\, (g_{\td{3}1}^N-1)
+2\xi_1^N\mu_{32}\mu_{31}\,(g_{31}^N-1)\\
&\quad -\xi_2^N\, \mu_{\td{3}2}\mu_{21}\,(g_{21}^N-1)
+\xi_2^N\,(
\xi_2^N\mu_{2}\mu_{\td{3}2}
-\xi_1^N\mu_{32}^2
)\mu_{1}\,(g_{1}^N-1).
\end{align*}	

\bigbreak
\begin{prop}\label{prop:A-hopfalg}
$A(\Gamma, V, \mathbf{\mu})$ is a pointed Hopf algebra whose structure
is determined by 
$$
\com(g) = g\ot g, \qquad 
\com(y_{i}) = y_{i}\ot 1 + g_{i}\ot y_{i}\qquad\text{ for all }g\in \Gamma,
i\in\I_{3}.
$$
\end{prop}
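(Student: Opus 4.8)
The plan is to realize $A(\Gamma,V,\mathbf{\mu})=(R\#\Bbbk\Gamma)/J$ as a quotient Hopf algebra of the bosonization $H:=R\#\Bbbk\Gamma$, which is already an ordinary Hopf algebra by the discussion preceding the statement. Writing $\pi\colon H\to A(\Gamma,V,\mathbf{\mu})$ for the projection, it suffices to show that $J$ is a coideal, i.e. $\com(J)\subseteq J\ot H+H\ot J$ and $\eps(J)=0$. Since $J\ot H+H\ot J=\Ker(\pi\ot\pi)$ and $\com^{-1}(\Ker(\pi\ot\pi))$ is a two-sided ideal of $H$ (preimage of an ideal under an algebra map), it is enough to check the coideal condition on the ideal generators of $J$, namely the elements $r_\alpha=\nu_\alpha-\mu_\alpha(g_\alpha^N-1)$ coming from the relations \eqref{def-rel-grade-1}, \eqref{def-rel-grade-2}, \eqref{def-rel-grade-3=y321}, \eqref{def-rel-grade-3=y332}, \eqref{def-rel-grade-3=y3321} and \eqref{def-rel-grade-3=y23321}, where $\nu_\alpha=y_\alpha^N+(\text{lower corrections})$. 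The condition $\eps(r_\alpha)=0$ is immediate. Once $A(\Gamma,V,\mathbf{\mu})$ is seen to be a bialgebra, it is a pointed Hopf algebra by a standard argument: it is generated as an algebra by the invertible group-likes $\Gamma$ and the $(g_i,1)$-skew-primitives $y_i$, so the antipode exists and is forced by $S(g)=g^{-1}$, $S(y_i)=-g_i^{-1}y_i$ (equivalently, the bijective antipode $S_0$ of $H$ satisfies $S_0(J)\subseteq J$ and descends), and its coradical is $\Bbbk\Gamma$.

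The crucial observation is that the coproduct identities of Section \ref{section-deform} are in fact identities in $H$, before passing to any lifting. Indeed, the computations of $\com(y_\alpha^N)$ in Lemmas \ref{lem:def-degree-1}, \ref{lem:def-degree-2}, \ref{lem:def-degree-3}, \ref{lem-grade-4} and \ref{def-grade-5} use only the explicit coproducts of the root vectors, the commutation relations of Remark \ref{rmk:PBW-relations} (which hold already in $R$), and the purely algebraic identities of Proposition \ref{prop-useful} and Corollary \ref{cor-rec-grado3}. Moreover every scalar coefficient occurring there is a power of the $q_{ij}$ with exponent a multiple of $N$ (such as $\tfrac{N(N-1)}{2}=N\cdot\tfrac{N-1}{2}$ and $N(N-1)$, integral multiples of $N$ because $N$ is odd), hence equal to $1$ since $q_{ij}^N=1$; this is precisely the content of Remark \ref{obs-repe-veces}. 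Thus in $H$ one has, for each $\alpha$, a closed formula for $\com(y_\alpha^N)$ whose non-extreme summands all have the shape (scalar)$\,y_\beta^N\,g^N\ot y_\gamma^N$ with $\beta+\gamma=\alpha$.

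Feeding these formulas into $\com(r_\alpha)$ is the heart of the matter. The lower corrections defining $\nu_\alpha$ are dictated by exactly the recursion of Section \ref{section-deform}, so that after also expanding $\com(\mu_\alpha(g_\alpha^N-1))=\mu_\alpha(g_\alpha^N-1)\ot 1+g_\alpha^N\ot\mu_\alpha(g_\alpha^N-1)$, the left tensor legs of the non-extreme summands reassemble into the lower generators $r_\beta$ of $J$. Concretely, for $\alpha=\alpha_{31}$ a direct substitution using the degree $\le 2$ coproducts gives
\[
\com(r_{31})=r_{31}\ot 1+g_{31}^N\ot r_{31}+\xi_2^N\,r_{32}\,g_1^N\ot y_1^N+\xi_2^N\,r_3\,g_{21}^N\ot y_{21}^N,
\]
which lies in $J\ot H+H\ot J$ because $r_{32},r_3\in J$; equivalently $\bar{\nu}_{31}$ is $(g_{31}^N,1)$-skew-primitive in $A(\Gamma,V,\mathbf{\mu})$, as it must be by Lemma \ref{lem-for-lifting}. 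The same mechanism governs every $r_\alpha$: the coefficients $\xi_1^N,\xi_2^N,\mu_\beta,\dots$ are precisely those that collapse each left leg onto a combination of the $r_\beta$.

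The main obstacle is then purely the bookkeeping for the relations of degree $4$ and $5$, i.e. $r_{\td{3}1}$ and $r_{\td{2}1}$: there $\com(y_\alpha^N)$ has many non-extreme summands (and for degree $5$ one must first invoke Lemmas \ref{lem-comutation-recur}, \ref{lem-recu-2} and \ref{lem-recu-3} to put $\mt{a}_2^N,\mt{a}_3^N,\mt{a}_4^N$ in closed form via the scalars $\lambda_{\td{3}1},\lambda_{31},\lambda_{21},\lambda_{1}$), so checking that all left legs collapse onto the generators $r_{\td{3}2},r_{31},r_{21},r_3,r_2,r_1$ of $J$ is lengthy. Structurally, however, it is identical to the degree $3$ computation above and requires no new idea. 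Assembling the six cases yields $\com(J)\subseteq J\ot H+H\ot J$, so $A(\Gamma,V,\mathbf{\mu})$ is a bialgebra; combined with the concluding remark of the first paragraph, this proves it is a pointed Hopf algebra with the asserted comultiplication.
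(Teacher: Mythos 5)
Your proposal is correct and follows essentially the same route as the paper: the paper's proof also reduces the statement to showing that $J$ is a Hopf ideal of $R\#\Bbbk\Gamma$, cites the coproduct computations of Lemmas \ref{lem:def-degree-1}--\ref{lem-grade-4} and \ref{prop-def-23321} to see that the generators of $J$ are skew-primitive modulo $J$, and concludes pointedness from generation by group-likes and skew-primitives. You merely make explicit what the paper leaves implicit (that those computations are identities in the bosonization itself, with the relevant scalars trivial because $N$ is odd, and that the left tensor legs reassemble into lower generators of $J$, as in your correct formula for $\com(r_{31})$).
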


\begin{proof}
It is enough to prove that the two-sided ideal $J$ of $R\#\Bbbk\Gamma$
is actually a Hopf ideal. But this follows immediately from the proof of 
Lemma \ref{lem:def-degree-1}, Lemma \ref{lem:def-degree-2}, 
Lemma \ref{lem:def-degree-3}, Lemma \ref{lem-grade-4} and 
Lemma \ref{prop-def-23321}, for all the elements corresponding to
relations \eqref{def-rel-grade-1}, 
\eqref{def-rel-grade-2}, \eqref{def-rel-grade-3=y321}, 
\eqref{def-rel-grade-3=y332}, \eqref{def-rel-grade-3=y3321} and 
\eqref{def-rel-grade-3=y23321} 
are skew-primitives.
Finally, the Hopf algebra is clearly pointed because it is generated by group-like 
and skew-primitive elements.
\end{proof}

%

%


\subsection{Liftings}
We finish the paper with 
the next result, which is our main contribution. 
It describes explicitly the liftings of Nichols algebras of type $B_3$.
Its proof follows from what we have done until now. Nevertheless, we add 
some lines for completeness.

\begin{teor} \label{lifting-b3} The following assertions hold:
\begin{enumerate}	
	\item [\rm (i)]  The Hopf algebra 
	$A(\Gamma, V, \mu):=R\#\Bbbk\Gamma/J$ is a lifting of $\toba(V)\#\Bbbk\Gamma$.
	In particular, $\dim A(\Gamma, V, \mu) =N^{9}|\Gamma|$.
	
	\item [\rm (ii)] If $A$ is a pointed Hopf algebra such that 
	$\operatorname{gr} A\simeq \toba(V)\#\Bbbk\Gamma$ and $\ord q= N$ is an 
	odd number greater than $5$ then $A\simeq A(\Gamma, V, \mu)$ 
	for some family of scalars $\mu$ and for some $g_i\in \Gamma$.
\end{enumerate}	
\end{teor}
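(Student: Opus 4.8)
The plan is to leverage all the coproduct computations done in Section~\ref{section-deform} and assemble them into the two assertions. For part (i), I would first invoke Proposition~\ref{prop:A-hopfalg}, which already establishes that $A(\Gamma,V,\mu)$ is a pointed Hopf algebra, so the ideal $J$ is a Hopf ideal. The essential point is to show that the associated graded Hopf algebra with respect to the coradical filtration recovers $\toba(V)\#\Bbbk\Gamma$. Since every defining relation of $J$ is a deformation of a relation of $\toba(V)$ by \emph{lower-order} terms (the right-hand sides of the displayed relations lie in strictly smaller terms of the coradical filtration, as each $\lpbw{\alpha}^N$ is expressed modulo elements of lower degree together with group-like corrections), passing to $\gr A$ kills all the correction terms and leaves exactly the relations \eqref{serre-relations} and \eqref{root-vectors} of Proposition~\ref{prop-nichols}. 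Hence $\gr A(\Gamma,V,\mu)\simeq \toba(V)\#\Bbbk\Gamma$, which is precisely the definition of a lifting; the dimension count $N^9|\Gamma|$ then follows because $\gr A$ has this dimension and passing to $\gr$ preserves dimension.

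For part (ii), the strategy is the converse reconstruction. Starting from a pointed Hopf algebra $A$ with $\gr A\simeq \toba(V)\#\Bbbk\Gamma$, I would first use the results of \S\ref{subsec:primitive-elem} to identify the group $\Gamma=G(A)$ and choose skew-primitive lifts $y_i\in A_1$ of the generators $x_i$, with $y_i\in \mathcal{P}^{\chi_i}_{g_i}(A)$, exactly as fixed at the start of that subsection. Then I would define all the higher root vectors $\lpbw{\alpha}\in A$ by the braided-commutator formulas \eqref{def-elements}, mirroring \eqref{root-vector}. The entire content of Lemmas~\ref{lem:def-degree-1} through~\ref{prop-def-23321} is that these root vectors \emph{must} satisfy precisely the relations collected in the presentation of $A(\Gamma,V,\mu)$, for a unique family of scalars $\mu=(\mu_\alpha)_{\alpha\in\Phi^+}$ subject to the vanishing conditions $\mu_\alpha=0$ when $g_\alpha^N=1$ or $\chi_\alpha^N\neq\varepsilon$. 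This yields a surjective Hopf algebra map $A(\Gamma,V,\mu)\twoheadrightarrow A$. Since both Hopf algebras have the same dimension $N^9|\Gamma|$ by part (i), the surjection is an isomorphism.

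The main obstacle is the dimension-preservation argument underpinning both surjectivity and injectivity. To conclude that $A$ is generated by $\Gamma$ together with the $y_i$ (equivalently, that the root vectors $\lpbw{\alpha}$ and the group generate all of $A$), one needs that the PBW monomials in the $\lpbw{\alpha}$ times group elements span $A$ and are linearly independent. This is where the hypothesis $N>5$ (more precisely, $N$ odd and greater than $5$) becomes indispensable: it is exactly the condition that makes Lemma~\ref{lem-for-lifting}(iii)--(iv) force the absence of unwanted skew-primitives, so that the only deformations of the defining relations are the ones enumerated, and no extra relations or generators appear. I would therefore spend care verifying that, under $N>5$, the graded object $\gr A$ admits no further skew-primitive elements beyond those accounted for, guaranteeing that the surjection from $A(\Gamma,V,\mu)$ is forced to be an isomorphism by the equality of dimensions.
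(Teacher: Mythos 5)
Your part (ii) follows the paper's argument essentially verbatim: lift the generators to skew-primitive elements $y_i\in\mathcal{P}^{\chi_i}_{g_i}(A)$, invoke Lemmas \ref{lem:def-degree-1}, \ref{lem:def-degree-2}, \ref{lem:def-degree-3}, \ref{lem-grade-4} and \ref{prop-def-23321} to produce an epimorphism $A(\Gamma,V,\mu)\twoheadrightarrow A$, and conclude by equality of dimensions. The problem is your part (i), which is the input for that dimension comparison, and your argument for it has the classical flatness gap. From the fact that each generator of $J$ is a root-vector relation corrected by terms of lower coradical degree, one only obtains a \emph{surjection} $\toba(V)\#\Bbbk\Gamma\twoheadrightarrow \gr A(\Gamma,V,\mu)$ (and even that presupposes that $\Gamma$ embeds into $G(A(\Gamma,V,\mu))$ and that the images of $y_1,y_2,y_3$ remain linearly independent modulo the coradical). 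Passing to $\gr$ can create \emph{new} relations: a quotient by deformed relations may collapse, so the assertion that ``$\gr$ kills the correction terms and leaves exactly the relations'' is precisely the flatness statement that has to be proved, and ``passing to $\gr$ preserves dimension'' is true but tells you nothing about what $\dim A(\Gamma,V,\mu)$ actually is — a priori it could be strictly smaller than $N^9|\Gamma|$. (Toy example: deforming $x^2=0$ to $x^2=\mu(1-g^2)$ when $\chi^2\neq\varepsilon$ forces $1-g^2=0$ in the quotient and collapses the group algebra; the vanishing conditions on $\mu_\alpha$ rule out this particular failure, but they do not by themselves prove flatness.)

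The paper closes this gap by a different route: it exhibits the canonical Hopf algebra map $\phi:\mathcal{T}(V)=T(V)\#\Bbbk\Gamma\to A(\Gamma,V,\mu)$ and verifies that it is a \emph{lifting map} in the sense of \cite[Definition 4.3]{AAGMV}, i.e.\ $\phi|_{\Bbbk\Gamma}=\id$ and $\phi|_{V\#\Bbbk\Gamma}$ is an isomorphism onto $\mathcal{P}_1(A(\Gamma,V,\mu))$ (via Taft--Wilson and Lemma \ref{lem-g-primitivos}); since $A(\Gamma,V,\mu)$ is then generated in coradical degree $\le 1$ with infinitesimal part exactly $V$, its diagram is a coradically graded braided Hopf algebra generated in degree one with all primitives in degree one, hence is $\toba(V)$ itself, giving $\gr A(\Gamma,V,\mu)\simeq\toba(V)\#\Bbbk\Gamma$ and the dimension count. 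You would need to supply an argument of this kind (or an explicit PBW/flatness argument) before the ``same dimension'' step in part (ii) can be run. Your closing paragraph correctly senses that the dimension of the presented algebra is the crux, but the verification you propose — that $\gr A$ has no extra skew-primitives — is aimed at the wrong object: in part (ii) the isomorphism type of $\gr A$ is a hypothesis, and the missing ingredient is the dimension of $A(\Gamma,V,\mu)$, i.e.\ part (i).
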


\begin{proof} (i) Let $V$ be the vector space spanned by the elements 
 $y_{1}$, $y_{2}$ and $y_{3}$ and consider the Hopf algebra $\mathcal{T}(V) = T(V)\# \Bbbk \Gamma$
given by the bosonization, which is constructed using the $\Gamma$-module and $\Gamma$-comodule 
defined in the first subsection. Define the algebra map $\phi: \mathcal{T}(V) \to A(\Gamma, V, \mu)$
by $\phi(y_{i}) = y_{i}$ for all $i\in\I_{3}$ and $\phi(g) = g$ for all $g\in \Gamma$.
Clearly, $\phi$ is a Hopf algebra map and $\phi|_{\Bbbk \Gamma} = \id$.  Moreover,
we have that 
$\phi|_{V\#\Bbbk \Gamma} : V\#\Bbbk \Gamma \to \mathcal{P}_{1}(A(\Gamma, V, \mu)) $
is a Hopf bimodule isomorphism, because 
$\mathcal{P}_{1}(A(\Gamma, V, \mu))= V\#\Bbbk\Gamma$, by the Taft-Wilson Theorem and Lemma \ref{lem-g-primitivos}.
Hence, $\phi$ is a lifting map (c.f. \cite[Definition 4.3]{AAGMV}) and 
whence $A(\Gamma, V, \mu)$ is a lifting of $\toba(V)\#\Bbbk\Gamma$. 

(ii) Let 
$A$ be a pointed Hopf algebra such that 
$\operatorname{gr} A\simeq \toba(V)\#\Bbbk\Gamma$.
Then, by  
Lemma \ref{lem:def-degree-1}, Lemma \ref{lem:def-degree-2}, 
Lemma \ref{lem:def-degree-3}, Lemma \ref{lem-grade-4} and 
Lemma \ref{prop-def-23321},
there exists a Hopf algebra epimorphism $\pi:A(\Gamma, V, \mu) \to A $
for some family of scalars satisfying 
that $\mu_{\alpha} = 0$ if $g_{\alpha}^{N} = 1$ or $\chi_{\alpha}^{N}\neq \varepsilon$.
Since both algebras have the same dimension, $\pi$ is indeed an isomorphism
and the proof follows.
\end{proof}

\section*{Acknowledgments}
Part of this work was carried out during a visit of the second author to the 
Department of Mathematics at the Federal University of Santa Catarina. G. A. G. 
expresses his gratitude for the warm hospitality received, in special to  
A. do Nascimento Oliveira Sousa and O. M. He also thanks CAPES for the financial support provided.


\begin{thebibliography}{ABC}

\bibitem[AA]{AA} {\sc N. Andruskiewitsch} and 
{\sc I. Angiono},
\textit{On finite dimensional Nichols algebras of diagonal type}, Bull. Math. Sci. \textbf{7} (2017), 353–-573. 

\bibitem[AAG]{AAG} {\sc N. Andruskiewitsch},  
{\sc I. Angiono} and  {\sc A. Garc\'ia Iglesias},
\textit{Liftings of Nichols Algebras of Diagonal Type I. Cartan Type A}, Int. Math. Res. Not. \textbf{9} (2017), 2793-–2884.

\bibitem[AAGMV]{AAGMV} {\sc N. Andruskiewitsch},  
{\sc I. Angiono},  {\sc A. Garc\'ia Iglesias}, {\sc A. Masuoka} and {\sc C. Vay} 
\textit{Lifting via cocycle deformation}, J. Pure Appl. Alg. \textbf{218}, Issue 4, (2014), 684--703. 

\bibitem[AS1]{AS} {\sc N. Andruskiewitsch} and {\sc H.-J. Schneider}, 
\textit{On the classification of finite-dimensional pointed Hopf algebras}, 
Ann. Math. Vol. \textbf{171} (2010), No. 1, 375--417. 
	

\bibitem[AS2]{AS2} {\sc N. Andruskiewitsch} and {\sc H.-J. Schneider}, 
\textit{Lifting of quantum linear spaces and
	pointed Hopf algebras of order $p^3$}, 
J. Alg.  \textbf{209} (1998), 659--691. 

\bibitem[AS3]{AS3} {\sc N. Andruskiewitsch} and {\sc H.-J. Schneider}, 
\emph{Pointed Hopf algebras}, New directions in Hopf algebras, 
1--68, Math. Sci. Res. Inst. Publ., \textbf{43}, Cambridge Univ. Press, Cambridge, 2002.

\bibitem[AV]{AV} {\sc N. Andruskiewitsch} and {\sc C. Vay}, 
\emph{Finite dimensional Hopf algebras over the dual group algebra of the symmetric group in three letters}, 
Comm. Alg. \textbf{39} (2011), 4507--4517.

\bibitem[An]{An}{\sc I. Angiono},
\emph{A presentation by generators and relations of Nichols algebras
of diagonal type and convex orders on root systems. }
J. Eur. Math. Soc. \textbf{17} (2015), 2643--2671. 

\bibitem[AGI]{AGI}{\sc I. Angiono} and {\sc A. Garc\'ia Iglesias},
\emph{Liftings of Nichols algebras of diagonal type II. All liftings are cocycle deformations.} Selecta Math. \textbf{25} (2019), no. 1, Paper No. 5, 95 pp.


\bibitem[BDR]{Bea} {\sc M. Beattie}, {\sc S. D\v{a}sc\v{a}lescu} and {\sc \cb{S}. Raianu}, 
\textit{Lifting of Nichols algebra of type $B_2$}, Isr. J. Math. Vol. \textbf{132} (2002), 1--28.

\bibitem[GIJ]{GIJ} {\sc A. Garc\'ia Iglesias} and {\sc J. M. Jury Giraldi}, 
\textit{Liftings of Nichols algebras of diagonal type III. Cartan type $G_{2}$},
J. Algebra \textbf{478} (2017), 506--568.

\bibitem[H1]{H1} {\sc I. Heckenberger}, 
\textit{The Weyl groupoid of a Nichols algebra of diagonal type}, Invent. Math. \textbf{164} (2006), 175–-188.

\bibitem[H2]{H2} \bysame, 
\textit{Classification of arithmetic root systems}, Adv. Math. \textbf{220} (2009), 59--124.

\bibitem[Hel]{Hel} {\sc M. Helbig}, 
On the Lifting of Nichols Algebras,
Comm. Alg. \textbf{40} (2012), 3317--3351.

\end{thebibliography}
	\end{document}